\setlist{itemsep=3pt, parsep=3pt}
\newcommand{\optionaldesc}[2]{%
	\phantomsection
	#1\protected@edef\@currentlabel{#1}\label{#2}%
}
\numberwithin{equation}{section}
\let\OLDthebibliography\thebibliography
\renewcommand\thebibliography[1]{
	\OLDthebibliography{#1}
	\setlength{\parskip}{1pt}
	\setlength{\itemsep}{1pt plus 0.3ex}
}
\definecolor{ForestGreen}{rgb}{0.1,0.6,0.05}
\definecolor{EgyptBlue}{rgb}{0.063,0.1,0.6}
\def\Wo{W_0^{1,p}(\Omega)}
\newcommand{\prnth}[1]{\left(#1\right)}
\newcommand{\quadr}[1]{\left[#1\right]}
\newcommand{\abs}[1]{\left|#1\right|}
\newcommand{\ungulata}[1]{\left\{#1\right\}}
\newcommand{\norm}[1]{\left\|#1\right\|}
\newcommand{\sgn}{\mathop{\mathrm{sgn}}\nolimits}
\newcommand{\I}[2]{\int_{#1}^{#2}}
\newcommand{\Lim}[1]{\lim\limits_{#1}}
\newcommand{\Sum}[2]{\sum\limits_{#1}^{#2}}
\renewcommand{\div}{\mathop{\mathrm{div}}\nolimits}
\newcommand{\testf}{\psi}
\newtheorem{theorem}{Theorem}[section]
\newtheorem{lemma}[theorem]{Lemma}
\newtheorem{proposition}[theorem]{Proposition}
\newtheorem{corollary}[theorem]{Corollary}
\theoremstyle{definition}
\newtheorem{remark}[theorem]{Remark}
\newtheorem{example}[theorem]{Example}
\definecolor{cerise}{RGB}{222, 49, 99}
\newenvironment{algorithm}[1][A]{\vspace{0.25cm} \textbf{Algorithm #1}\vspace{0.1cm}}{\vspace{0.25cm}}
\title{
	\vspace{-3em}
	Inverse iteration method for higher eigenvalues of the $p$-Laplacian}
\author{Vladimir Bobkov ~\&~ Timur Galimov \\}
\date{}
\begin{document}
	\maketitle
	\vspace*{-5ex}
	\begin{abstract}
		We propose a characterization of a $p$-Laplace higher eigenvalue based on the inverse iteration method with balancing the Rayleigh quotients of the positive and negative parts of solutions to  consecutive $p$-Poisson equations. 
		The approach relies on the second eigenvalue's minimax properties, but the actual limiting eigenvalue depends on the choice of initial function. 
		The well-posedness and convergence of the iterative scheme are proved. 
		Moreover, we provide the corresponding numerical computations.
		As auxiliary results, which also have an independent interest, we provide several properties of certain $p$-Poisson problems. 
		
		\par
		\smallskip
		\noindent {\bf  Keywords}: 
		$p$-Laplacian; second eigenvalue; higher eigenvalues; inverse iteration method; inverse power method. 
		
		\noindent {\bf MSC2010}: 
		35P30,  
		35J92,	
		46-08,  
		47J10,  
		47J25,  
		49R05,  
		65N25.	
	\end{abstract}
	
	\renewcommand{\cftdot}{.}
	\begin{quote}	
		\tableofcontents	
		\addtocontents{toc}{\vspace*{-2ex}}
	\end{quote}
	

	\section{Introduction}\label{sec:intro}
	
	Let $1 < p < \infty$ and let $\Omega \subset \mathbb{R}^D$ be a domain of finite measure, $D \geqslant 1$.
	Consider the problem of finding an \textit{eigenpair} $(\lambda, u) \in \mathbb{R} \times (W_0^{1,p}(\Omega) \setminus \ungulata{0})$ such that
	\begin{equation}\label{EVP}
		\I{\Omega}{}{} \abs{\nabla u}^{p-2} \langle \nabla u, \nabla \testf \rangle \,dx = \lambda \I{\Omega}{} \abs{u}^{p-2} u \testf \,dx
		\quad \text{for any}~ \testf \in W_0^{1,p}(\Omega).
	\end{equation}
	This is a weak form of the Dirichlet eigenvalue problem for the $p$-Laplace operator $\Delta_p (\cdot)\coloneqq \div (\abs{\nabla (\cdot)}^{p-2}\nabla (\cdot))$, formally stated as:
	$$ 
	\begin{cases}
		-\Delta_p u = \lambda \abs{u}^{p-2} u \text{ in } \Omega, \\
		u\big|_{\partial \Omega} = 0.
	\end{cases}
	$$
	In particular, taking $p = 2$, we get the classical Dirichlet Laplace eigenvalue problem. 
	It is thus natural to call $\lambda$ an \textit{eigenvalue} and the corresponding non-zero solution $u$ of \eqref{EVP} an \textit{eigenfunction} of the $p$-Laplacian.
	Eigenfunctions are precisely critical points of the Rayleigh quotient functional $u \mapsto R[u]$ defined on $\Wo \setminus \{0\}$ as
	$$
	R[u] \coloneqq \frac{\I{\Omega}{} \abs{\nabla u}^p \,dx}{\I{\Omega}{} \abs{u}^p \,dx}, 
	$$ 
	and eigenvalues are critical levels of $R$.
	
	It is known (see, for example, \cite{Lindquist}) that the set of all eigenvalues 
	$$
	\sigma(-\Delta_p) := \ungulata{\lambda \in \mathbb{R}:~ \text{\eqref{EVP} holds for some } u \in W_0^{1,p}(\Omega) \setminus \ungulata{0}}
	$$
	is closed and unbounded from above, while there exists the smallest eigenvalue $\lambda_1 (\Omega, p) > 0$ (the \textit{first eigenvalue}), which can be variationally characterized as 
	\begin{equation}\label{lambda_1 var} 
		\lambda_1(\Omega,p) 
		= 
		\inf_{\substack{u \in W_0^{1,p}(\Omega): \\ u \ne 0}} R[u].
	\end{equation}
	A minimizer of \eqref{lambda_1 var} is the \textit{first eigenfunction}, it is unique up to a constant multiplier, and it is the only eigenfunction that does not change sign in $\Omega$, see, e.g., \cite{Lindquist} for an overview. 
	Any eigenvalue different from the first one is called a \textit{higher} eigenvalue, and any eigenfunction associated with such an eigenvalue is a \textit{higher} eigenfunction. 
	The set of higher eigenvalues also attains its minimum $\lambda_2(\Omega, p) > \lambda_1(\Omega, p)$, called the \textit{second eigenvalue}, which has several variational characterizations, see, e.g., \cite{Anane,Bobkov,BrFr1,Cuesta 1}. 
	Among them, the following one, given in \cite[Proposition~4.2]{Bobkov} (see also \cite[Lemma~3.1]{BrFr1}), will be useful for our purposes: 
	\begin{equation}\label{eq:lambda2}
		\lambda_2 (\Omega, p) = \inf_{\substack{u \in W_0^{1,p}(\Omega): \\ u^{\pm} \ne 0}} \max \ungulata{R[u^+], R[u^-]},
	\end{equation}
	where $u^+$ and $u^-$ are the positive and negative parts of $u$, respectively, defined as 
	$$
	u^+ \coloneqq \max \ungulata{0, u}
	\quad \text{and} \quad 
	u^- \coloneqq \max \ungulata{0, -u},
	$$
	so that $u = u^+ - u^-$. 
	The eigenvalues $\lambda_1 (\Omega,p)$ and $\lambda_2 (\Omega, p)$ are the first two members of an infinite sequence of the so-called \textit{variational eigenvalues} $\{\lambda_k(\Omega,p)\}$, which can be constructed using minimax methods of Ljusternik–Schnirelmann type (see, e.g., \cite{Anane,DrRob}). However, it is not known whether the sequence $\{\lambda_k(\Omega,p)\}$ covers the whole set $\sigma(-\Delta_p)$. This, as well as many other open questions regarding spectral properties of the $p$-Laplacian, makes it particularly important to investigate its eigenpairs not just from analytic, but also from numerical point of view. 
	
	There are plenty of techniques to approximate $\lambda_1 (\Omega, p)$ and the corresponding eigenfunction, see, e.g., \cite{Biezuner,Bozorgnia1,Horak, Hynd,LeftonWei}.  
	Many of these are based on the \textit{inverse iteration} method (also known as the \textit{inverse power} method), which gives rise to a large family of easily implementable and yet effective numerical schemes.
	In the linear case $p=2$, the inverse iteration over the first eigenfunction's orthogonal complement in $\Wo$ allows to approximate \textit{higher} eigenvalues as well, see Section~\ref{sec:inverse-first} for details. 	
	
	In the non-linear case $p \neq 2$, due to a handful of obstacles including the lack of natural orthogonality in $\Wo$, the choice of methods to approximate higher eigenvalues is far less abundant. 
	In this regard, we can only refer to \cite{Horak} for a numerical procedure based on the mountain-pass characterization of $\lambda_2(\Omega,p)$, and to \cite{brounreichel} for the approximation of radially symmetric eigenfunctions using a shooting method. 
	An attempt of generalizing the inverse iteration to higher eigenvalues can be found in \cite{Bozorgnia2}, although it does not seem to be applicable to general domains, see Section~\ref{sec:inverse-first}. 	

	The aim of the present work is to propose an inverse iteration-based scheme that  guarantees the approximation of a higher eigenpair with all the advantages of the underlying method preserved. 
	The proposed approach seems to be new even in the linear case $p=2$. 
	
	This paper is organized as follows.
	In Section~\ref{sec:inverse-first}, we briefly overview the inverse iteration schemes developed for approximating 
	$\lambda_1(\Omega,p)$ and discuss the difficulties arising in the case of higher eigenvalues.
	Section~\ref{sec:inverse-second} contains the main results of the work: we introduce a novel method to obtain higher eigenvalues of the $p$-Laplacian and state its well-posedness and convergence. 
	Section~\ref{sec:properties} is of auxiliary nature, and it concentrates on certain properties of the equation $-\Delta_p v = \abs{f}^{p-2}f$ which we will need in the proofs of our main results. 
	In particular, we provide a useful criterion for an arbitrary $f \in W_0^{1,p}(\Omega)$ to be an eigenfunction.
	In Sections~\ref{sec:well pos} and~\ref{sec:conv}, we prove the well-posedness and convergence of the proposed algorithm, respectively, and note some additional properties.  
	Finally, in Section~\ref{sec:num}, we demonstrate the performance of the algorithm in a series of numerical experiments. 
	Some auxiliary facts regarding general properties of the $p$-Laplacian referred in our proofs are placed in Appendix~\ref{sec:appendix}.

\medskip
\noindent
\textbf{Notation.}
Throughout the text, we denote the $L^p(\Omega)$-norm as $\norm{\cdot}_{p}$ and by $\norm{\nabla u}_{p}$ we mean the $W_0^{1,p}(\Omega)$-norm of $u$, that is,
$$ 
\norm{\nabla u}_{p} \coloneqq \prnth{\I{\Omega}{} \abs{\nabla u}^p \,dx}^{1/p}.
$$
Taking any $u \in L^p (\Omega)$, we always assume that $u$ is a fixed element of the corresponding equivalence class. 
For any non-zero $u \in L^p (\Omega)$, the superscript ``tilde'' will refer to the $L^p(\Omega)$-normalization of $u$, i.e., 
$$
\tilde u \coloneqq \frac{1}{\norm{u}_{p}} \, u.
$$
Note that 
$$
R[u] = R[\tilde u] = \norm{\nabla \tilde u}_{p}^p
\quad \text{for any}~ u \in W_0^{1,p}(\Omega) \setminus \{0\}.$$
For convenience, we write $R^+[u]$ and $R^-[u]$ instead of $R[u^+]$ and $R[u^-]$, if the corresponding Rayleigh quotients are defined. 
For any $u \in \Wo$, we also let $\Omega_u^+$ and $\Omega_u^-$ denote the sets
\begin{equation}\label{nodal sets def}\Omega_u^+ := \ungulata{x \in \Omega\!: u(x) > 0}, \quad \Omega_u^- := \ungulata{x \in \Omega\!: u(x) < 0}. \end{equation}
Note that $u^\pm \in \Wo$, with $\nabla u^\pm = \pm \mathbf{1}_{\Omega_u^\pm} \nabla u$, see, e.g., \cite[Lemma~1.19]{Suomalaiset}.
Here, $\mathbf{1}_K$ stands for the indicator function of a set $K$. 
In particular, it follows from \eqref{EVP} that $R^+[u] = R^-[u]$ for any sign-changing eigenfunction $u$. 

We often consider $p$-Poisson problems of the form
\begin{equation}\label{Dirichlet unformal}
	\I{\Omega}{} \abs{\nabla u}^{p-2} \langle \nabla u, \nabla \testf \rangle \, dx = \I{\Omega}{} f\testf \,dx 
	\quad \text{for any}~ \testf \in W_0^{1,p}(\Omega),
\end{equation}
where $f \in L^{p'}(\Omega)$ and the solution $u$ is of class $\Wo$. 
For the sake of visual clarity, we sometimes formally write \eqref{Dirichlet unformal} as the Dirichlet boundary value problem for the $p$-Poisson equation:
\begin{equation}\label{Dirichlet formal2} 				
	\begin{cases}
		-\Delta_p u = f \text{ in } \Omega, \\
		u\big|_{\partial \Omega} = 0,
	\end{cases}
\end{equation}
always meaning that \eqref{Dirichlet formal2} is understood in the weak sense. 
Moreover, since we deal with solutions in $\Wo$ only, we occasionally omit the boundary conditions in \eqref{Dirichlet formal2}, for short. For properties of the $p$-Poisson problem \eqref{Dirichlet unformal},  see Section~\ref{sec:properties} and Appendix~\ref{sec:appendix}.

\section{Brief overview of the inverse iteration method}\label{sec:inverse-first}

As has already been mentioned above, many algorithms for the numerical approximation of the first $p$-Laplace eigenpair are based on the inverse iteration technique.
Given an initial guess $u_0 \in \Wo \setminus \{0\}$, such algorithms build a functional sequence $\ungulata{u_k}$ by consecutively solving $p$-Poisson problems of the form
\begin{equation}\label{InvIt general} \begin{cases}
		-\Delta_p u_{k+1} = f_k (u_k) \text{ in } \Omega, \\
		u_{k+1}\big|_{\partial \Omega} = 0.
\end{cases}\end{equation}
Alongside $\ungulata{u_k}$, a numerical sequence $\ungulata{\mu_k}$
is generated for the approximation of an eigenvalue according to some rule (for instance, $\ungulata{\mu_k}$ can be defined as the Rayleigh quotients sequence $\ungulata{R[u_k]}$). 
The term $f_k$ in \eqref{InvIt general} usually takes the form $f_k (u_k) = C_k |u_k|^{p-2} u_k$, where $C_k > 0$ are certain constant multipliers (in the simplest case, $C_k \equiv 1$). 
Note that if $-\Delta_p u_{k+1} = C_k |u_k|^{p-2} u_k$ in $\Omega$,
then $-\Delta_p (C_k^{{1}/{(1-p)}} u_{k+1}) = |u_k|^{p-2} u_k$ in $\Omega$, and hence,  be an initial guess $u_0$ fixed, all the sequences $\ungulata{u_k}$ produced by different choice strategies for $\ungulata{C_k}$ are equivalent up to rescaling.
The proper choice of $\ungulata{C_k}$, however, can make the convergence analysis of $\ungulata{u_k}$ and $\ungulata{\mu_k}$ easier, for instance, by preventing the sequence $\ungulata{u_k}$ from approaching zero. 

It is worth noting that inverse iteration schemes have not only computational, but purely theoretical importance as well. 
For instance, under the name of monotone iteration methods, they are extensively used in the fixed point theory \cite{sat}. 
Another abstract application of \eqref{InvIt general} arises if we let $\Omega$ be a bounded domain on a complete Riemannian manifold and take $p = 2$, $u_0 = 1$.
In this case, it is known  \cite{brownian} that for any $k \geqslant 0$ the solution $u_{k+1}$ of 
\begin{equation}\label{geodesic} 
	\begin{cases}
		-\Delta u_{k+1} = (k+1) u_{k} \text{ in } \Omega, \\
		u_{k+1} \big|_{\partial \Omega} = 0, 
\end{cases} \end{equation}
maps each $x \in \Omega$ to a $(k+1)$-th moment (in a probabilistic sense) of the first exit time of the Brownian motion starting at $x$. 
The sequence of Poisson problems \eqref{geodesic} is usually referred to as \textit{the Poisson hierarchy}. 
It is not surprising that the scheme \eqref{geodesic} is related to a specific variational characterization of the first eigenvalue and is often employed directly for its calculation \cite{geodesic,HMP,MM1}.

In the linear case $p = 2$, the inverse iteration technique is a classical tool for approximating Laplace eigenvalues described in many numerical methods textbooks, see, e.g., \cite[Chapter~9]{Sun}. In its simplest form, the scheme can be stated as
\begin{equation}\label{eq:Sun}
	\begin{cases}
		-\Delta u_{k+1} = u_k \text{ in } \Omega, \\
		u_{k+1}\big|_{\partial \Omega} = 0.
	\end{cases} 
\end{equation}
The scheme \eqref{eq:Sun} is designed to approximate the \textit{first} Laplace eigenvalue under appropriate assumptions on the initial guess $u_0$. 
Moreover, the convergence to higher eigenpairs is also  achievable by the fact that $L^2 (\Omega)$ and $W_0^{1,2}(\Omega)$, being the Hilbert spaces, can be endowed with an orthogonal basis of Laplace eigenfunctions. 
Let
$$
\lambda_1 < \lambda_2 \leqslant \ldots \leqslant \lambda_n \leqslant \ldots
$$
be the spectrum of the Laplace operator in $\Omega$ counted with multiplicity, and let 
$\ungulata{e_n}$ be an orthogonal system of the corresponding eigenfunctions.
One can prove that if $u_0 \in \ungulata{e_1, \dots, e_{m-1}}^{\perp} \setminus \ungulata{e_m}^{\perp}$,
then the sequence $\ungulata{\tilde u_k}$ generated by \eqref{eq:Sun} converges to some scaling of $e_m$, see \cite{bessa} and also \cite[Section~6]{Biezuner}. 

The problem of approximating higher eigenpairs can therefore be easily solved in the linear case.
Such a technique, however, is impossible to adapt for $p \ne 2$.
In the non-linear case, most of the proposed schemes aim at approximating only the first eigenvalue $\lambda_1 (\Omega, p)$ along with the corresponding eigenfunction.
In fact, it is proven in \cite{Hynd} that for any $u_0 \in L^p (\Omega)$ the sequence
$$ 
\ungulata{\lambda_1^{\frac{k}{p-1}} (\Omega, p) \cdot u_k}, 
$$
where $\ungulata{u_k}$ is generated by the scheme \eqref{InvIt general} with $f_k (u_k) = \abs{u_k}^{p-2} u_k$,
converges strongly in $\Wo$ either to the first eigenfunction or to the identical zero;
in the case of sufficiently regular $\Omega$, it follows from Hopf's lemma that the non-trivial convergence can be assured by taking $u_0 > 0$.
It is also noted that in this case the sequences $\ungulata{R[u_k]}$ and $\{\|u_{k-1}\|_{p} / \|u_k\|_{p}\}$ are both non-increasing.
The results of \cite{Hynd} were inspired by \cite{Biezuner},
where similar conclusions were made within a slightly different theoretical framework.  
In particular, the authors of \cite{Biezuner} were interested in achieving a \textit{uniform} convergence towards an eigenfunction, and proposed a few other numerical sequences potentially approximating the first eigenvalue.
In \cite{Bozorgnia1}, a somewhat more straightforward convergence result was formulated for the choice $f_k (u_k) = R[u_k] \cdot |\tilde u_k|^{p-2} \tilde u_k$.
We also refer to \cite{ercole, Hynd FA} for generalizations of the inverse iteration to wider classes of elliptic operators in the abstract framework,
and to \cite{doubly nonlinear} for a dynamic interpretation of this approach. 

The inverse iteration technique thus provides a relatively well examined characterization of the first $p$-Laplace eigenpair for any value of $p > 1$. 
As has been already noted above, its use for investigating higher eigenvalues is, by contrast, very limited. 
In \cite{Bozorgnia2}, an iterative scheme similar to that of \cite{Bozorgnia1} was proposed for computing the second eigenpair: given a sign-changing $u_0 \in W_0^{1,p}(\Omega)$, construct a sequence $\ungulata{u_k}$ by consecutively solving \eqref{InvIt general} with
\begin{equation}\label{eq:boz1}
f_k (u_k) = R^+ [u_k] \frac{(u_k^+)^{p-1}}{\norm{u_k^+}_{p}^{p-1}} - R^- [u_k] \frac{(u_k^-)^{p-1}}{\norm{u_k^-}_{p}^{p-1}},
\end{equation}
see \cite[Algorithm~1]{Bozorgnia2}. 
Note that here $u_k^+$ and $u_k^-$ are normalized separately. 
This algorithm might converge to a higher eigenpair when $\Omega$ and the initial guess $u_0$ are chosen to have certain symmetries. However, in general, it cannot be guaranteed that the produced sequence $\{u_k\}$ preserves its initial sign-changing property and that $\{R[u_k]\}$ converges to a \textit{higher} eigenvalue. 
The scheme \eqref{InvIt general} with $f_k$ defined by  \eqref{eq:boz1} served as the initial  source of inspiration for \hyperref[alg]{Algorithm A} which we introduce in the next section.

\section{Main results}\label{sec:inverse-second}

We propose the following inverse iteration scheme, which, as will be stated and proved further, allows to approximate a higher eigenpair of the $p$-Laplace operator in $\Omega$.

\begin{algorithm}\label{alg}
\begin{enumerate}[label={\arabic*.}]
\item Choose an initial guess $u_0 \in L^\infty(\Omega)$ 
such that			
\begin{equation}\label{eq:u0-signs}
	\min\ungulata{\norm{u_0^+}_{p}, \norm{u_0^-}_{p}} > 0 \quad \text{and} \quad \Big|\{x \in \Omega:\, u_0(x) = 0\}\Big| = 0.
\end{equation}
\item For $k = 0, 1, 2, \dots$:
\begin{enumerate}[label=(\roman*)]
	\item\label{alg:step2}  Define the mapping $\varphi_k (\alpha)$: $[0;1] \to \Wo$ by assigning to $\alpha \in [0;1]$ the solution $\varphi_k(\alpha)$ of the $p$-Poisson problem
	\begin{equation}\label{phi}
		\begin{cases}
			-\Delta_p \varphi_k (\alpha) = \alpha \prnth{\tilde u_k^+}^{p-1} - \beta(\alpha)\cdot \prnth{\tilde u_k^-}^{p-1} \text{ in } \Omega, \\
			\varphi_k(\alpha)\big|_{\partial \Omega} = 0,
		\end{cases}
	\end{equation}
	where $\beta(\alpha)$: $[0;1] \to [0;1]$ is a continuous monotonically decreasing function with $\beta(0) = 1$ and $\beta(1) = 0$,  independent of $k$. For instance, $\beta(\alpha) = 1 - \alpha$ or $\beta(\alpha) = \sqrt[p]{1-\alpha^p}$.
	
	\item Find any root 
	$\alpha_k \in (0;1)$ 
	of the equation 
	\begin{equation}\label{eq:R+=R-}
		R^+ [\varphi_k(\alpha)] 
		= 
		R^- [\varphi_k (\alpha)]
	\end{equation}
	and let $\beta_k := \beta(\alpha_k)$.
	
	\item Set $u_{k+1} := \varphi_k (\alpha_k)$, increase the iterations counter $k$ by $1$ (i.e., set $k \coloneqq k + 1$), and return to Step~\ref{alg:step2}. 
\end{enumerate}
\end{enumerate}
\end{algorithm}

\noindent	
In other words, we adjust the coefficient $\alpha$ of \eqref{phi} to equate the Rayleigh quotients of the positive and negative parts of the solution of \eqref{phi} at every iteration.

\medskip
We formulate the well-posedness, monotonicity, and convergence of the proposed algorithm in the following two statements.
\begin{theorem}[Well-posedness and monotonicity]\label{th:existence-of-alphaK}
Let $k \geqslant 0$. 
Then there exists a root $\alpha_k \in (0;1)$ of the equation \eqref{eq:R+=R-}. 
As a consequence, $u_{k+1} := \varphi_{k} (\alpha_{k})$ is defined, it is sign-changing, solves the $p$-Poisson problem   
\begin{equation}\label{phi2}
\begin{cases}
	-\Delta_p u_{k+1} = \alpha_k \prnth{\tilde u_k^+}^{p-1} - \beta_k\, \prnth{\tilde u_k^-}^{p-1} \text{ in } \Omega, \\
	u_{k+1}\big|_{\partial \Omega} = 0,
\end{cases}
\end{equation}
and satisfies
\begin{equation}\label{eq:RRR}
R[u_{k+1}] 
=
R^{+}[u_{k+1}]
=
R^{-}[u_{k+1}].
\end{equation} 
Moreover, we have
\begin{equation}\label{prop:monotonicity}
\lambda_2(\Omega,p)
\leqslant
R[u_{k+2}] 
\leqslant
R[u_{k+1}],
\end{equation}
where $R[u_{k+2}] = R[u_{k+1}]$ if and only if $u_{k+1}$ is an eigenfunction. 
\end{theorem}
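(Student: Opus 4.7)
The plan is to prove the three claims in order: (A) existence of a root $\alpha_k \in (0,1)$ of \eqref{eq:R+=R-} with $u_{k+1} := \varphi_k(\alpha_k)$ sign-changing and satisfying \eqref{phi2}; (B) the identity \eqref{eq:RRR}; and (C) the chain \eqref{prop:monotonicity} with its rigidity case. The argument will proceed by induction on $k$, the inductive hypothesis being that $u_k$ is sign-changing, lies in $L^\infty(\Omega)$, and satisfies the sign conditions in \eqref{eq:u0-signs} (the base case $k=0$ is exactly the choice of initial guess).

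For (A), the first step is continuity of $\alpha \mapsto \varphi_k(\alpha)$ from $[0,1]$ into $W_0^{1,p}(\Omega)$, which follows from continuity of the right-hand side of \eqref{phi} in $L^{p'}(\Omega)$ together with continuity of the inverse $p$-Laplacian solution operator. At $\alpha = 0$ the right-hand side is $-(\tilde u_k^-)^{p-1}$, non-positive and non-trivial, so the strong maximum principle gives $\varphi_k(0) < 0$ in $\Omega$; symmetrically $\varphi_k(1) > 0$. Hence there is a maximal open subinterval $(\alpha_*, \alpha^*) \subset (0,1)$ on which $\varphi_k(\alpha)$ is sign-changing, and on it the continuous function $\Phi(\alpha) := R^+[\varphi_k(\alpha)] - R^-[\varphi_k(\alpha)]$ is well defined. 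The key step is to prove $\Phi(\alpha) \to +\infty$ as $\alpha \to \alpha_*^+$ and $\Phi(\alpha) \to -\infty$ as $\alpha \to \alpha^{*-}$. For the first limit, uniform convergence $\varphi_k(\alpha) \to \varphi_k(\alpha_*) \leq 0$ forces $|\Omega_{\varphi_k(\alpha)}^+| \to 0$, whence by Faber--Krahn $R^+[\varphi_k(\alpha)] \geq \lambda_1(\Omega_{\varphi_k(\alpha)}^+, p) \to +\infty$, while $R^-[\varphi_k(\alpha)]$ stays bounded because its denominator is bounded below near $\alpha_*$; the other limit is symmetric. The intermediate value theorem then yields $\alpha_k \in (\alpha_*, \alpha^*) \subset (0,1)$ with $\Phi(\alpha_k) = 0$, giving \eqref{phi2} together with \eqref{eq:R+=R-}. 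Part (B) is purely algebraic: the a.e.\ identities $\nabla u_{k+1}^\pm = \pm \mathbf{1}_{\Omega_{u_{k+1}}^\pm} \nabla u_{k+1}$ give the decompositions $\|\nabla u_{k+1}\|_p^p = \|\nabla u_{k+1}^+\|_p^p + \|\nabla u_{k+1}^-\|_p^p$ and $\|u_{k+1}\|_p^p = \|u_{k+1}^+\|_p^p + \|u_{k+1}^-\|_p^p$, so the common value $R^+[u_{k+1}] = R^-[u_{k+1}]$ coincides with $R[u_{k+1}]$.

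For (C), the bound $\lambda_2(\Omega,p) \leq R[u_{k+2}]$ follows at once from \eqref{eq:lambda2} applied to the sign-changing admissible function $u_{k+2}$, for which $R^+[u_{k+2}] = R^-[u_{k+2}]$ by (A) and (B) at step $k+1$. For $R[u_{k+2}] \leq R[u_{k+1}]$ I plan to exploit that $u_{k+2}$ is the unique minimizer of the strictly convex functional $J(w) := \tfrac{1}{p}\|\nabla w\|_p^p - \int f_{k+1} w\, dx$ with $f_{k+1} := \alpha_{k+1}(\tilde u_{k+1}^+)^{p-1} - \beta_{k+1}(\tilde u_{k+1}^-)^{p-1}$, comparing $J(u_{k+2})$ with $J(t \tilde u_{k+1})$ optimized in $t > 0$. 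Using the disjoint supports of $\tilde u_{k+1}^+$ and $\tilde u_{k+1}^-$, the quantity $\int f_{k+1} \tilde u_{k+1}$ simplifies to $\alpha_{k+1} \|\tilde u_{k+1}^+\|_p^p + \beta_{k+1} \|\tilde u_{k+1}^-\|_p^p$, and together with $R[u_{k+1}] = R^\pm[u_{k+1}]$ from step $k$ this should produce the inequality. The rigidity statement follows from strict convexity of $J$: equality forces $u_{k+2} = c\, u_{k+1}$ for some $c > 0$, and substitution into \eqref{phi2} at level $k+1$ makes $u_{k+1}$ a weak solution of \eqref{EVP}; the converse is trivial, since if $u_{k+1}$ is an eigenfunction the scheme reproduces it up to scaling.

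I expect the main obstacles to be the blow-up analysis in (A) (needing both uniform continuous dependence of $\varphi_k(\alpha)$ on $\alpha$ and a Faber--Krahn-type bound to force $R^+[\varphi_k(\alpha)] \to +\infty$) and, more substantially, the variational comparison in (C), where the non-vanishing cross-terms $\int (\tilde u_{k+1}^+)^{p-1} u_{k+2}^-$ and $\int (\tilde u_{k+1}^-)^{p-1} u_{k+2}^+$ (which do not vanish because $u_{k+2}$ and $u_{k+1}$ need not share nodal sets) may require an auxiliary result from Section~\ref{sec:properties}.
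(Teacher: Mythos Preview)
Your outline tracks the paper's proof in Section~\ref{sec:well pos} fairly closely, but there is a genuine gap in the blow-up step of (A), and your approach to (C) does not deliver what you need.

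\textbf{The gap in (A).} The assertion that convergence $\varphi_k(\alpha)\to\varphi_k(\alpha_*)\le 0$ forces $|\Omega^+_{\varphi_k(\alpha)}|\to 0$ is false without further input: if $\varphi_k(\alpha_*)$ vanishes on a set $E$ of positive measure, then for $\alpha>\alpha_*$ the solution $\varphi_k(\alpha)$ may well be strictly positive on $E$ (the weak comparison principle gives $\varphi_k(\alpha)\ge\varphi_k(\alpha_*)$, and nothing prevents strict inequality there), so $|\Omega^+_{\varphi_k(\alpha)}|\ge|E|$ stays bounded away from zero and Faber--Krahn gives nothing. The missing fact is precisely $|\{\varphi_k(\alpha_*)=0\}|=0$. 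The paper isolates this as Lemma~\ref{lem: phi nodal sets}: using the inductive hypothesis $|\{u_k=0\}|=0$, the right-hand side of \eqref{phi} is a.e.\ nonzero, and then a nontrivial regularity result for $p$-Poisson equations (\cite[Corollary~1.1]{Lou}, see also \cite{ACF}) yields that the solution's nodal set has measure zero. Only after this does the paper combine pointwise monotonicity in $\alpha$ (Lemma~\ref{lem:monotone}, via the comparison principle) with continuity of measure to obtain $|\Omega^+_{\varphi_k(\alpha)}|\to 0$. Your plan recognises the blow-up as an obstacle but misses this specific ingredient; it is exactly the reason the second condition in \eqref{eq:u0-signs} is imposed.

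\textbf{The approach to (C).} The variational comparison $J(u_{k+2})\le \min_{t>0}J(t\,\tilde u_{k+1})$ gives, after the computation you sketch, only a \emph{lower} bound $\|\nabla u_{k+2}\|_p^p\ge A^{p'}/R[u_{k+1}]^{1/(p-1)}$, not an upper bound on the ratio $R[u_{k+2}]$; the cross-terms you worry about are a symptom of this mismatch. The paper bypasses the issue entirely by invoking Proposition~\ref{inequalities system} with $v=u_{k+2}$ and $f=\alpha_{k+1}^{1/(p-1)}\tilde u_{k+1}^+-\beta_{k+1}^{1/(p-1)}\tilde u_{k+1}^-$, so that $-\Delta_p v=|f|^{p-2}f$ and, by \eqref{eq:RRR} at step $k$ together with \eqref{fracrat}, $R[f]=R^\pm[f]=R^\pm[u_{k+1}]=R[u_{k+1}]$. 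That proposition (proved by testing with both $v$ and $f$ and chaining two H\"older inequalities) yields $R[v]\le R[f]$ directly, with equality characterised by $f$ being an eigenfunction---the rigidity claim. This is the ``auxiliary result from Section~\ref{sec:properties}'' you anticipate; you should apply it at the outset rather than as a repair.
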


\begin{theorem}[Convergence]\label{thm:main}
Let $\{u_k\}$ be the sequence defined by \hyperref[alg]{Algorithm A} and $\,{\cal U}$ be the set of its strong limit points in $W_0^{1,p}(\Omega)$. Then the following assertions hold:
\begin{enumerate}[label={\rm(\Roman*)}]
\item\label{thm:main:R is an eigenvalue} 
The sequence $\ungulata{R[u_k]}$ monotonically decreases towards a higher eigenvalue $R^*$: 
\begin{equation}\label{R*}
	R^* := \inf\limits_{k \geqslant 1} R[u_k] = \lim\limits_{k \to \infty}R[u_k]. 
\end{equation}
\item\label{thm:main:partial limits are eigenfunctions} ${\cal U} \ne \varnothing$ and it is a subset of the (sign-changing) eigenfunctions corresponding to the eigenvalue $R^*$.
\item\label{thm:main:shifts} Any subsequence $\ungulata{u_{k_n}}$ contains a sub-subsequence having a strong limit in $\Wo$. 
If $\ungulata{u_{k_n}}$ converges strongly to some $u \in {\cal U}$, then, for any $i \in \mathbb{N}$, the shifted subsequence $\ungulata{u_{k_n + i}}$ also converges strongly to $u$.
\item\label{thm:main:dist to 0} $\Lim{k \to \infty} \rho(u_k, {\cal U}) = 0$, where $\rho$ stands for the distance function in $W_0^{1,p}(\Omega)$.
\item\label{thm:main:alphas} There exists a limit
$$ 
\Lim{k \to \infty} \alpha_k = \alpha^* = \Lim{k \to \infty} \beta_k, 
$$
coinciding with the only fixed point of the mapping $\beta(\cdot)$.
\item\label{thm:main:structure of U} ${\cal U}$ either consists of a single eigenfunction or has no isolated elements. 
In particular, if $R^*$ is a simple eigenvalue, then the whole sequence $\ungulata{u_k}$ converges to the corresponding eigenfunction. 
\end{enumerate}
\end{theorem}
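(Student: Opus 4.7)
The plan is to treat the six claims in sequence using a common compactness-and-continuity machinery: boundedness of $\tilde u_k$ in $W_0^{1,p}(\Omega)$, the compact Sobolev embedding into $L^p(\Omega)$, and the continuity of the Poisson solver $(-\Delta_p)^{-1}: L^{p'}(\Omega) \to W_0^{1,p}(\Omega)$ recorded in Appendix~\ref{sec:appendix}. Part~\ref{thm:main:R is an eigenvalue} is immediate from \eqref{prop:monotonicity} and the lower bound $\lambda_2(\Omega,p)$, so the limit $R^*$ exists; that $R^* \in \sigma(-\Delta_p)$ will drop out of \ref{thm:main:partial limits are eigenfunctions}. For \ref{thm:main:partial limits are eigenfunctions}, I note that $\|\nabla \tilde u_k\|_p^p = R[u_k] \le R[u_1]$, extract $\tilde u_{k_n} \rightharpoonup \tilde u$ weakly in $W_0^{1,p}(\Omega)$ and strongly in $L^p(\Omega)$, and pass to convergent subsequences $\alpha_{k_n} \to \alpha^*$ and (by continuity of $\beta$) $\beta_{k_n} \to \beta(\alpha^*)$ so that the right-hand side of \eqref{phi2} converges strongly in $L^{p'}(\Omega)$. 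Continuity of the Poisson solver then promotes this to strong convergence $u_{k_n+1} \to u$ in $W_0^{1,p}(\Omega)$, and passing to the limit in \eqref{eq:RRR} gives $R^+[u]=R^-[u]=R^*$. To identify $u$ as an eigenfunction I invoke the criterion of Section~\ref{sec:properties}: since $R[u_{k_n+1}] - R[u_{k_n+2}] \to 0$, the equality case in \eqref{prop:monotonicity} must hold in the limit, which forces $u$ to be an eigenfunction with eigenvalue $R^*$.

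For \ref{thm:main:shifts}, the same argument applied to any subsequence extracts a further sub-subsequence with a strong limit in ${\cal U}$. Shift invariance comes from a fixed-point property at $u \in {\cal U}$: if $\tilde u_{k_n} \to \tilde u$, continuity of the Poisson solver gives $u_{k_n+1} \to u'$ satisfying $-\Delta_p u' = \alpha^*(\tilde u^+)^{p-1} - \beta(\alpha^*)(\tilde u^-)^{p-1}$, while by \ref{thm:main:partial limits are eigenfunctions} both $u$ and $u'$ satisfy $-\Delta_p w = R^* |w|^{p-2} w$; comparing these representations on the disjoint supports $\Omega_u^{\pm}$ (which are preserved by the sign-coherent structure of the iteration) yields $u' = u$ and, en passant, the identity $\alpha^* = R^*\|u\|_p^{p-1} = \beta(\alpha^*)$. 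Induction then gives $u_{k_n+i} \to u$ for every $i \in \mathbb{N}$. Part \ref{thm:main:dist to 0} is the standard contradiction: any subsequence with $\rho(u_{k_m},{\cal U}) \ge \varepsilon > 0$ would, by \ref{thm:main:shifts}, contain a sub-subsequence converging into ${\cal U}$, contradicting the separation.

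For \ref{thm:main:alphas}, the identification above shows that every partial limit $\alpha^*$ of $\{\alpha_k\}$ is a fixed point of $\beta$. The map $\alpha \mapsto \alpha - \beta(\alpha)$ is strictly increasing (sum of the identity and the strictly decreasing $-\beta$) and changes sign between $\alpha=0$ and $\alpha=1$, so this fixed point is unique; therefore every convergent subsequence of $\{\alpha_k\} \subset [0,1]$ has the same limit $\alpha^*$, so the whole sequence converges, and the same holds for $\{\beta_k\}$ by continuity of $\beta$.

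For \ref{thm:main:structure of U}, the plan is to prove that ${\cal U}$ is connected in $W_0^{1,p}(\Omega)$. Shift invariance and \ref{thm:main:dist to 0} together imply $\|u_{k+1}-u_k\|_{W_0^{1,p}} \to 0$ (otherwise a separated subsequence would violate \ref{thm:main:shifts}), and $\{u_k\}$ is relatively compact. Under these two conditions the $\omega$-limit set of a sequence in a metric space is connected, a standard topological fact; a connected subset of a Banach space is either a singleton or perfect, giving the dichotomy. If $R^*$ is simple, every eigenfunction is a scalar multiple of a fixed $e$, the scaling constraint $\|u\|_p = (\alpha^*/R^*)^{1/(p-1)}$ from \ref{thm:main:alphas} and the fixed sign pattern inherited from $u_0$ together reduce ${\cal U}$ to a single point, so the whole sequence converges. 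The main obstacle is the eigenfunction identification in \ref{thm:main:partial limits are eigenfunctions}, which rests squarely on the criterion developed in Section~\ref{sec:properties}; the remaining assertions are careful compactness bookkeeping on top of it.
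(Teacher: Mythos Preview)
Your overall architecture is right, and for \ref{thm:main:dist to 0}, \ref{thm:main:alphas} and the dichotomy in \ref{thm:main:structure of U} you argue essentially as the paper does (your route via ``$\|u_{k+1}-u_k\|\to 0$ plus relative compactness implies the $\omega$-limit set is connected'' is a clean variant of the paper's crossing argument in Lemma~\ref{lem:U}). But there is a genuine gap in \ref{thm:main:partial limits are eigenfunctions}--\ref{thm:main:shifts} that the paper spends real effort on.

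The issue is that you never show the limit is nonzero and sign-changing. You pass from strong $L^p$-convergence $\tilde u_{k_n}\to \tilde u$ (with $\|\tilde u\|_p=1$) and a subsequential limit $\alpha_{k_n}\to\alpha^*$ to strong convergence $u_{k_n+1}\to u$ in $W_0^{1,p}(\Omega)$ via the Poisson solver. But to ``pass to the limit in \eqref{eq:RRR}'' and read off $R^\pm[u]=R^*$ you must know $u^\pm\neq 0$; at this stage you have no control on $\alpha^*\in[0;1]$ nor on $\|\tilde u^\pm\|_p$. If $\alpha^*=0$ (or $\tilde u^-=0$), the limit $u$ is sign-constant or even zero, and the argument collapses. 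The same circularity bites your identification of $\alpha^*$ as a fixed point of $\beta$: that step compares two eigenfunction equations for $u$ and $u'$ on the supports $\Omega_u^\pm$, which presupposes both are nonzero sign-changing eigenfunctions.

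The paper breaks this circle by first working entirely at the normalized level: it rewrites the iteration as $-\Delta_p \tilde u_{k+1}=|\hat u_k|^{p-2}\hat u_k$ with the rescaling \eqref{hats}, shows $\{\hat u_k\}$ is bounded in $W_0^{1,p}(\Omega)$ (Lemma~\ref{boundedness for hats}), and then uses weak lower semicontinuity of the norm on the $\hat u$-side together with Proposition~\ref{inequalities system} to obtain $R[\tilde u]=R[\varphi]$ for the limit $\varphi$ of $\hat u_{k_n-1}$. Proposition~\ref{inequalities system} then forces $\tilde u$ to be a \emph{higher} eigenfunction, hence sign-changing, which yields $\liminf\|\tilde u_k^\pm\|_p>0$ (Proposition~\ref{lem:conv:sep}). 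Only then is separatedness of $\{u_k\}$ from zero established (Lemma~\ref{lem:separatedness}), and only after that can one run your \ref{thm:main:shifts}-type argument on the unnormalized sequence (Proposition~\ref{lem:if any subsequence is separated from zero}). Your line ``since $R[u_{k_n+1}]-R[u_{k_n+2}]\to 0$, the equality case in \eqref{prop:monotonicity} must hold in the limit'' hides exactly this work: the equality case of Proposition~\ref{inequalities system} is a rigidity statement, and passing it to the limit requires controlling \emph{both} sides of the equation, which is what the $\hat u_k$ device and the lower-semicontinuity step accomplish.

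Two smaller remarks. Your parenthetical ``the disjoint supports $\Omega_u^\pm$ are preserved by the sign-coherent structure of the iteration'' is not justified; the paper instead invokes \cite[Lemma~2.5]{Drabek on Courant} to compare the two eigenfunction representations. And in \ref{thm:main:structure of U} the phrase ``fixed sign pattern inherited from $u_0$'' is not needed (and not obviously true): once ${\cal U}\subset\{\pm\varphi\}$, your own dichotomy already forces a singleton.
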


We refer to Proposition~\ref{lem:if any subsequence is separated from zero} and Lemma~\ref{lem:U} for some further properties of $\{u_k\}$. 
Let us also provide a few additional comments.

\begin{remark}\label{uniqueness}
The ``index shift'' property \ref{thm:main:shifts} and the overall convergence of $\ungulata{\alpha_k}$ may indicate that even if the eigenvalue $R^*$ is not simple, the whole sequence $\ungulata{u_k}$ still converges to some higher eigenfunction, i.e.,  $\mathop{\mathrm{card}}\nolimits\, {\cal U} = 1$.
We discuss difficulties in verificating this claim in Remark~\ref{remark:counterexample}. 
Note that regardless of whether ${\cal U}$ is a singleton or not, it is guaranteed by Statement~\ref{thm:main:dist to 0} that \hyperref[alg]{Algorithm A} always provides numerically reliable results: no matter at which step $k$ large enough we interrupt the iterative scheme, the resulting approximation $u_k$ lies sufficiently close to the set of  eigenfunctions corresponding to $R^*$. 
\end{remark}

\begin{remark}\label{rem:initial-guess}
Let us provide a few examples illustrating the ways in which the value of $R^*$ depends on the choice of the initial guess $u_0$. 

\begin{enumerate}
\item If $u_0 \in \Wo$ is chosen to be a higher eigenfunction associated with an eigenvalue $\mu = R[u_0]$, then it is easy to see that  \hyperref[alg]{Algorithm A} with the choice $\alpha_k = \alpha^*$ for $k \geqslant 0$ preserves the Rayleigh quotient and converges to a particular scaling of $u_0$. 
Indeed, at the very first iteration, the problem \eqref{phi} with $\alpha_0 = \alpha^* = \beta(\alpha_0)$ gives
$$ 
-\Delta_p \varphi_0 (\alpha^*) = \alpha^* \abs{\tilde u_0}^{p-2} \tilde u_0 = -\Delta_p \quadr{\prnth{\frac{\alpha^*}{\mu}}^{\frac{1}{p-1}} \tilde u_0}, 
$$
so that $\alpha^*$ solves \eqref{eq:R+=R-} and therefore we can let
$$ 
u_1 = \prnth{\frac{\alpha^*}{\mu}}^{\frac{1}{p-1}} \tilde u_0, \quad \norm{u_1}_{p} = \prnth{\frac{\alpha^*}{\mu}}^{\frac{1}{p-1}}. 
$$
Note that $\alpha^*$ delivers the solution of \eqref{eq:R+=R-} regardless of whether $u_0$ satisfies the second assumption in \eqref{eq:u0-signs} or not. 
Arguing by induction and taking $\alpha_k = \alpha^*$ at each step $k \geqslant 1$, we get
\begin{gather}  
	u_2 = \prnth{\frac{\alpha^*}{\mu}}^{\frac{1}{p-1}} \tilde u_1 = \prnth{\frac{\alpha^*}{\mu}}^{\frac{1}{p-1}} \frac{u_1}{\norm{u_1}_{p}} = u_1 = \prnth{\frac{\alpha^*}{\mu}}^{\frac{1}{p-1}} \tilde u_0,\\
	\vdots\\
	u_{k+1} = \prnth{\frac{\alpha^*}{\mu}}^{\frac{1}{p-1}} \tilde u_k = u_1 = \prnth{\frac{\alpha^*}{\mu}}^{\frac{1}{p-1}} \tilde u_0.
\end{gather}
We thus conclude that if $u_0$ is a higher eigenfunction, then the choice $\alpha_k = \alpha^*$ for $k \geqslant 0$ results in \hyperref[alg]{Algorithm A} converging to a particular scaling of $u_0$ with $R^* = \mu$. 

\item\label{rem:initial-guess:2} In practice, it might be useful to consider $\Omega$ and $u_0$ having a certain symmetry (e.g., both being radially symmetric). 
In this case, by induction and due to the uniqueness of the solution to the $p$-Poisson problem \eqref{phi2}, each $u_k$ inherits the imposed symmetry, and hence so do their limits.
Therefore, we can approximate ``symmetric'' eigenfunctions and corresponding ``symmetric'' eigenvalues by choosing an appropriate initial guess $u_0$. 
See Section~\ref{sec:num} for some numerical results and discussion. 

\item Suppose that the second eigenvalue  $\lambda_2(\Omega,p)$ is isolated in $\sigma(-\Delta_p)$, that is, there exists $\varepsilon>0$ such that
$$
\Big(\lambda_2(\Omega,p)-\varepsilon;  \lambda_2(\Omega,p)+\varepsilon\Big) \, \cap \, \sigma(-\Delta_p) = \{\lambda_2(\Omega,p)\}. 
$$	
Then, given a sign-changing initial guess $u_0$ with $R[u_0] < \lambda_2(\Omega,p) + \varepsilon$, \hyperref[alg]{Algorithm A} will converge to the second eigenpair, as it follows from Theorem~\ref{thm:main} \ref{thm:main:R is an eigenvalue}. 
However, the required isolatedness assumption does not seem to be proved for $p \neq 2$ and $D \geqslant 2$. 
Note that such a reasoning can be applied only to the second eigenvalue specifically: for any other isolated higher eigenvalue $\lambda > \lambda_2 (\Omega, p)$ choosing $u_0$ such that $\lambda < R[u_0] < \lambda + \varepsilon$ for $\varepsilon > 0$ small enough might be insufficient to ensure the convergence towards $\lambda$. 
\end{enumerate}

It is important to note that the constrained mountain-pass algorithm of \cite{Horak} has the same particularities as discussed in the examples above. Namely, although it is designed to approximate $\lambda_2(\Omega,p)$, its actual convergence to the \textit{second} eigenvalue seems to be difficult to establish rigorously. At the same time, the algorithm of \cite{Horak} is also able to approximate ``symmetric'' eigenpairs.
\end{remark}

\begin{remark}
The assumption $\Big|\{x \in \Omega:\, u_0(x) = 0\}\Big| = 0$  imposed on $u_0$ in \hyperref[alg]{Algorithm A} is used only in the proof of Theorem~\ref{th:existence-of-alphaK}. 
It would be interesting to know whether this property of the nodal set $\ungulata{u_0(x)=0}$ is necessary for the existence of roots $\alpha_k$ ($k \geqslant 0$) in \eqref{eq:R+=R-}. 
We also note that the $p$-Poisson problem \eqref{phi} and the result of Theorem~\ref{th:existence-of-alphaK} are 	somewhat related to the problem studied in \cite{bergbucur}, where estimates on $\alpha_k$ were obtained in the case $p=2$ for $\tilde u_k = \mathbf{1}_{\Omega \setminus A} - \mathbf{1}_{A}$, with $A$ being some measurable subset of $\Omega$. 
\end{remark}

\section{Properties of the equation $-\Delta_p v = \abs{f}^{p-2}f$}\label{sec:properties}

In order to prove our main results regarding  \hyperref[alg]{Algorithm A}, we start by examining properties of the $p$-Poisson problem with the right-hand side taking the form $|f|^{p-2} f$ for some $f \in W_0^{1,p}(\Omega) \setminus \ungulata{0}$:
\begin{equation}\label{vf0} 
\I{\Omega}{} \abs{\nabla v}^{p-2} \langle \nabla v, \nabla \testf \rangle \,dx = \I{\Omega}{} \abs{f}^{p-2}f \, \testf \,dx \quad \text{for any } \testf \in W_0^{1,p}(\Omega).
\end{equation}
It is not hard to see that $|f|^{p-2} f \in L^{p'}(\Omega)$, and hence \eqref{vf0} has a unique solution $v \in W_0^{1,p}(\Omega) \setminus \ungulata{0}$. 
Since
$$\abs{f}^{p-2} f = \abs{f}^{p-2} (f^+ - f^-) = (f^+)^{p-1} - (f^-)^{p-1}, $$
we can also rewrite \eqref{vf0} as
\begin{equation}\label{vf} 
\I{\Omega}{} \abs{\nabla v}^{p-2} \langle \nabla v, \nabla \testf \rangle \,dx = \I{\Omega}{}  \quadr{(f^+)^{p-1} - (f^-)^{p-1}} \testf \,dx \quad \text{for any } \testf \in W_0^{1,p}(\Omega).
\end{equation}

\begin{remark}
The problem \eqref{phi}, which we base our iterative scheme on, can be seen as a particular case of \eqref{vf} with 
$$
v = \varphi_k (\alpha), \quad 
f^+ = \alpha^{\frac{1}{p-1}} \, \tilde u_k^+,
\quad 
f^- = \beta^{\frac{1}{p-1}} (\alpha) \, \tilde u_k^-.
$$
Moreover, problems of the type \eqref{vf} will later arise in Section~\ref{sec:conv}, playing an important role in the convergence analysis of \hyperref[alg]{Algorithm A}.
\end{remark}

Let $v$ solve \eqref{vf0} for some $f \ne 0$. 
It is known that $R[v] \leqslant R[f]$, see \cite[Proposition~2.4]{Hynd}, and also \cite[Lemma~2.1]{Bozorgnia1}, \cite[Lemma~4]{ercole}. 
Note that if $f$ is an eigenfunction, then the corresponding eigenvalue equals $R[f]$, so that the homogeneity of the $p$-Laplacian implies $v = R^{1/(1-p)}[f] \cdot f$ and $R[v] = R[f]$.
In fact, the converse is also true, namely, $f \ne 0$ is an eigenfunction whenever  $R[v] = R[f]$.

\begin{proposition}\label{inequalities system}
Let $f, v \in \Wo \setminus \ungulata{0}$ satisfy \eqref{vf0}. Then $R[v] \leqslant R[f]$. Moreover, $R[v] = R[f]$ if and only if $f$ is an eigenfunction, so that $v = R^{1/(1-p)}[f] \cdot f$.
\end{proposition}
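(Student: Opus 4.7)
The plan is to use the weak formulation \eqref{vf0} twice, with two judicious test functions, and then apply the pointwise Cauchy--Schwarz and integral Hölder inequalities.

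First, I would substitute $\psi = f$ into \eqref{vf0}. The right-hand side becomes $\int_\Omega |f|^{p-2} f \cdot f \, dx = \|f\|_p^p$, and on the left-hand side I would bound
$$
\int_\Omega |\nabla v|^{p-2} \langle \nabla v, \nabla f \rangle \, dx \;\leqslant\; \int_\Omega |\nabla v|^{p-1} |\nabla f| \, dx \;\leqslant\; \|\nabla v\|_p^{p-1} \, \|\nabla f\|_p,
$$
using Cauchy--Schwarz pointwise and then Hölder with exponents $p'$ and $p$. Next I would substitute $\psi = v$ into \eqref{vf0} to obtain $\|\nabla v\|_p^p = \int_\Omega |f|^{p-2} f \, v\, dx \leqslant \|f\|_p^{p-1}\|v\|_p$, again by Hölder. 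Rewriting these two inequalities in terms of Rayleigh quotients (using $\|\nabla v\|_p^p = R[v] \|v\|_p^p$ and $\|\nabla f\|_p^p = R[f] \|f\|_p^p$) yields
$$
\|f\|_p^p \leqslant R[v]^{(p-1)/p} \|v\|_p^{p-1} \, R[f]^{1/p} \|f\|_p
\quad \text{and} \quad
R[v] \|v\|_p^{p-1} \leqslant \|f\|_p^{p-1}.
$$
Isolating $\|f\|_p^{p-1}/\|v\|_p^{p-1}$ in the first and using the second to bound it below by $R[v]$, I arrive at $R[v] \leqslant R[v]^{(p-1)/p} R[f]^{1/p}$, hence $R[v] \leqslant R[f]$.

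For the equality case, I would trace back the conditions for equality in each Hölder and Cauchy--Schwarz step used with the choice $\psi = f$. Equality in the pointwise Cauchy--Schwarz forces $\nabla v$ and $\nabla f$ to be parallel with the same orientation almost everywhere, while equality in the integral Hölder forces $|\nabla v|^p$ to be proportional to $|\nabla f|^p$ almost everywhere. Combined, this gives $\nabla v = c\, \nabla f$ a.e.\ for some constant $c \geqslant 0$. Since both $v$ and $cf$ belong to $\Wo$, the Poincaré inequality (applied to $v - cf$) forces $v = c f$. Plugging this back into \eqref{vf0} yields $c^{p-1}(-\Delta_p f) = |f|^{p-2} f$, so $f$ is an eigenfunction with eigenvalue $c^{1-p}$; since $R[f]$ must equal this eigenvalue, I get $c = R[f]^{1/(1-p)}$ and thus $v = R[f]^{1/(1-p)} f$. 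The converse direction (eigenfunction $\Rightarrow$ equality) is immediate by homogeneity of $\Delta_p$.

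I expect the computational combination of the two Hölder inequalities to be the only mildly fiddly step; the main conceptual point is recognizing that both test functions $\psi=v$ and $\psi=f$ are needed in order to extract $R[v]$ on one side and $R[f]$ on the other. The equality analysis is routine once one remembers that a function in $\Wo$ with vanishing gradient is zero.
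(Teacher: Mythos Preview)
Your proof is correct and follows essentially the same route as the paper's: both argue by testing \eqref{vf0} with $\psi = v$ and $\psi = f$, applying H\"older, and combining the two resulting estimates to obtain $R[v] \leqslant R[f]$. The only minor variation is in the equality analysis, where the paper reads off $v = Cf$ directly from the H\"older equality between $|f|^{p-1}$ and $|v|$ (coming from the test $\psi = v$), while you instead trace equality through the gradient inequalities (coming from the test $\psi = f$) and then invoke Poincar\'e---both arguments are valid.
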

\begin{proof}
Although the monotonicity of the Rayleigh quotients is known in the literature, we provide a proof of this fact for more clarity. 
Let us test the identity \eqref{vf0} with $\psi = v$ and employ the Hölder inequality to get
\begin{equation}\label{lambda_1 shows up in the left sqrt[p]-2 no signs}
\norm{\nabla v}_p^p = \I{\Omega}{} |f|^{p-2} f v \, dx \leqslant \I{\Omega}{} |f|^{p-1} |v| \, dx \leqslant \norm{|f|^{p-1}}_{p'} \norm{v}_p = \norm{f}_p^{p-1} \norm{v}_p. 
\end{equation}
Dividing \eqref{lambda_1 shows up in the left sqrt[p]-2 no signs} by $\norm{v}_p \ne 0$, we obtain
\begin{equation}\label{Right sign L-norm estimate G-2 no signs} 
	R^{1/p}[v] \cdot \norm{\nabla v}_p^{p-1} \leqslant \norm{f}_p^{p-1}. 
\end{equation}
On the other hand, by testing \eqref{vf0} with $\psi = f$ we have
$$ \norm{f}_p^p = \I{\Omega}{} \abs{\nabla v}^{p-2} \langle \nabla v, \nabla f \rangle \, dx \leqslant \norm{\nabla v}_p^{p-1} \norm{\nabla f}_p = R^{1/p}[f] \cdot \norm{\nabla v}_p^{p-1} \norm{f}_p,
$$
and therefore
\begin{equation}\label{eq:estim0 G-0 no signs}  \norm{f}_p^{p-1} \leqslant R^{1/p}[f] \cdot \norm{\nabla v}_p^{p-1}. 
\end{equation}
Combining \eqref{Right sign L-norm estimate G-2 no signs} with \eqref{eq:estim0 G-0 no signs}, we deduce $R[v] \leqslant R[f]$.

Next, suppose $R[v] = R[f]$. In this case, the inequalities \eqref{Right sign L-norm estimate G-2 no signs} and \eqref{eq:estim0 G-0 no signs} become equalities, which in turn makes \eqref{lambda_1 shows up in the left sqrt[p]-2 no signs} a chain of equalities:
\begin{equation}\label{lambda_1 shows up in the left sqrt[p]-2 no signs eq} 
	\I{\Omega}{} |f|^{p-2} fv \, dx = \I{\Omega}{} |f|^{p-1} |v| \, dx = \norm{|f|^{p-1}}_{p'} \norm{v}_p. 
\end{equation}
The second equality in \eqref{lambda_1 shows up in the left sqrt[p]-2 no signs eq} 
yields $|v| = C|f|$ for some $C > 0$, while  the first one implies that $\sgn v = \sgn f$. We thus conclude $v = Cf$. 
Furthermore, by the equality in \eqref{eq:estim0 G-0 no signs}, we have $C = R^{1/(1-p)} [f]$. 
In view of \eqref{EVP}, $f$ is an eigenfunction corresponding to the eigenvalue $R[f] = C^{1-p}$.

Finally, the case of $f$ being an eigenfunction leads to the equality $R[v] = R[f]$, as has been already examined prior the statement. 
\end{proof}

The result of Proposition~\ref{inequalities system} will play an important role in our analysis, but considering only the values of $R[v]$ and $R[f]$ is insufficient for dealing with the approximation of higher eigenvalues, as the variational characterization \eqref{eq:lambda2} of $\lambda_2 (\Omega, p)$ indicates. Therefore, we need to draw some information about the positive and negative parts of $v$ and $f$, too, and the rest of this section will be concerned with their relation.

Taking $\testf = f^+$ in \eqref{vf} and applying the Hölder inequality,  we get
\begin{equation}\label{Right sign L-norm estimate G}
\norm{f^+}_{p}^p \leqslant \norm{\nabla v}^{p-1}_{p}  \norm{\nabla f^+}_{p},
\end{equation} 
and, in the same manner for $\testf = -f^-$,
\begin{equation}\label{Right sign L-norm estimate G-2}
\norm{f^-}_{p}^p \leqslant \norm{\nabla v}^{p-1}_{p}  \norm{\nabla f^-}_{p}.
\end{equation}
Next, for $\testf = v^+$, \eqref{vf} gives
$$ 
\I{\Omega}{} \abs{\nabla v^+}^{p} \,dx = \I{\Omega}{} v^+ \, (f^+)^{p-1} \, dx - \I{\Omega}{} v^+ \, (f^-)^{p-1} \, dx. 
$$
Since the subtrahend is non-negative, we obtain
\begin{equation}\label{eq:estim0 G-0}
\norm{\nabla v^+}_{p}^p \leqslant \I{\Omega}{} v^+ \, (f^+)^{p-1} \, dx
\leqslant \norm{v^+}_{p}  \norm{f^+}_{p}^{p-1}.
\end{equation}
In the case $v^+ \ne 0$, dividing \eqref{eq:estim0 G-0} by $\norm{v^+}_{p}$ and extracting $R^+[v]$, we get
\begin{equation}\label{lambda_1 shows up in the left sqrt[p]} 
\prnth{R^+ [v]}^{1/p} \cdot \norm{\nabla v^+}_{p}^{p-1} \leqslant \norm{f^+}_{p}^{p-1}. 
\end{equation}
Similarly, taking $\testf = -v^-$ in \eqref{vf}, we deduce that 
\begin{equation}
\label{eq:estim0 G-2}
\norm{\nabla v^-}_{p}^p \leqslant \I{\Omega}{} v^- \, (f^-)^{p-1} \, dx \leqslant \norm{v^-}_{p}  \norm{f^-}_{p}^{p-1},
\end{equation}
and the assumption $v^- \ne 0$ further leads to
\begin{equation}\label{lambda_1 shows up in the left sqrt[p]-2} 
\prnth{R^- [v]}^{1/p} \cdot \norm{\nabla v^-}_{p}^{p-1} \leqslant  \norm{f^-}_{p}^{p-1}. 
\end{equation}

\smallskip 
Suppose now that the solution $v$ of \eqref{vf0} does not change sign in $\Omega$, for instance, $v \geqslant 0$. If $f$ is sign-constant as well, then by Lemma~\ref{lem:sign-change lemma} we have $v = v^+$ and $f = f^+$, so that the inequality $R[v] \leqslant R^+[f]$ follows from Proposition~\ref{inequalities system}. 
We further state that this inequality holds even for $f$ being sign-changing, and formulate the following counterpart of Proposition~\ref{inequalities system}.

\begin{proposition}\label{lemma:main:2}
Let $f, v \in W_0^{1,p}(\Omega) \setminus \{0\}$ satisfy \eqref{vf} and $v$ be 
sign-constant in $\Omega$.
Then $R[v] \leqslant R^+[f]$ if $v \geqslant 0$, and $R[v] \leqslant R^-[f]$ if $v \leqslant 0$. Moreover, the equality is achieved if and only if $v = \lambda_1^{1/(1-p)} (\Omega, p) \cdot f$ is the first eigenfunction.
\end{proposition}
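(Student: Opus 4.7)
The plan is to follow the structure of Proposition~\ref{inequalities system}, now exploiting that $v$ is sign-constant while $f$ may still change sign. I would treat the case $v \geqslant 0$ in detail, then deduce the case $v \leqslant 0$ by applying the former to the pair $(-v, -f)$ (which still satisfies \eqref{vf} since the roles of $f^+$ and $f^-$ simply swap, and $R[-v]=R[v]$).

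First I would verify that necessarily $f^+ \not\equiv 0$: otherwise \eqref{vf} reduces to $-\Delta_p v = -(f^-)^{p-1} \leqslant 0$, and testing against $\psi = v \geqslant 0$ would force $\norm{\nabla v}_{p}^p \leqslant 0$, contradicting $v \neq 0$. With $f^+ \neq 0$ in hand, I would recombine the two inequalities already established in this section: \eqref{Right sign L-norm estimate G}, coming from testing \eqref{vf} with $f^+$, and \eqref{eq:estim0 G-0} (with $v^+ = v$), coming from testing with $v$. Note that deriving \eqref{eq:estim0 G-0} requires discarding the non-negative term $\int_\Omega v(f^-)^{p-1}\,dx$. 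Multiplying these two inequalities and cancelling the common factor $\norm{\nabla v}_{p}^{p-1}\,\norm{f^+}_{p}^{p-1}$ yields $\norm{f^+}_{p}\,\norm{\nabla v}_{p} \leqslant \norm{v}_{p}\,\norm{\nabla f^+}_{p}$, which on raising to the $p$-th power is exactly $R[v] \leqslant R^+[f]$.

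For the equality case, $R[v] = R^+[f]$ forces equality in every intermediate estimate. Equality in the Hölder step of \eqref{eq:estim0 G-0} yields $v = C f^+$ for some $C > 0$, while vanishing of the discarded term $\int_\Omega v(f^-)^{p-1}\,dx=0$ becomes automatic from the disjoint supports of $f^\pm$. To complete the argument I would substitute $v = C f^+$ back into \eqref{vf} and test with $\psi = f^-$: the left-hand side vanishes because $\nabla f^+\cdot\nabla f^- = 0$ a.e., and the $(f^+)^{p-1} f^-$ term vanishes by disjoint supports, leaving $0 = -\norm{f^-}_{p}^p$. Hence $f^- \equiv 0$, so $f \geqslant 0$ and $v = C f$. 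Plugging this back into \eqref{vf} identifies $f$ as an eigenfunction with eigenvalue $C^{1-p}$; since $f$ is non-trivial and non-negative, the characterization of $\lambda_1$ forces $C^{1-p}=\lambda_1(\Omega,p)$, whence $C = \lambda_1^{1/(1-p)}(\Omega, p)$. The converse implication is immediate from the $(p-1)$-homogeneity of $-\Delta_p$.

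The main subtlety I anticipate is the equality step: the proportionality $v = Cf^+$ alone is not enough, and one must additionally test the original equation against $\psi=f^-$ to force $f^- \equiv 0$, using both the gradient orthogonality and the support disjointness of $f^+$ and $f^-$. Once this trick is set up, everything else reduces to repackaging the Hölder-type inequalities already encountered in this section.
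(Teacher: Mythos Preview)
Your proposal is correct and follows essentially the same route as the paper. The paper combines \eqref{Right sign L-norm estimate G} with \eqref{lambda_1 shows up in the left sqrt[p]} (which is just \eqref{eq:estim0 G-0} divided by $\norm{v^+}_{p}$) into the single chain \eqref{:>+}, invokes Lemma~\ref{lem:sign-change lemma} for $f^+\neq 0$ where you argue it directly, and in the equality step tests with $\psi=-f^-$ rather than $\psi=f^-$; otherwise the arguments coincide.
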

\begin{proof}
Assume, without loss of generality, that $v$ is non-negative. Then Lemma~\ref{lem:sign-change lemma} yields $f^+ \ne 0$. 	
To deduce that $R[v] \leqslant R^+[f]$, we combine \eqref{Right sign L-norm estimate G} with \eqref{lambda_1 shows up in the left sqrt[p]} to obtain 
\begin{equation}\label{:>+} 
\prnth{R^+ [v]}^{1/p} \cdot \norm{\nabla v^+}_{p}^{p-1} \leqslant \norm{f^+}_{p}^{p-1} \leqslant \norm{\nabla v}_{p}^{p-1} \cdot \prnth{R^+[f]}^{1/p}.
\end{equation}
Since $v = v^+ \ne 0$, the inequality $R[v] \leqslant R^+ [f]$ follows. 

Assuming $R[v] = R^+[f]$, the inequality \eqref{:>+} becomes an equality that can be rewritten as
$$ 
\norm{f^+}_{p}^{p-1} \norm{v}_p = \norm{\nabla v}_{p}^{p-1} R^{1/p}[v] \cdot \norm{v}_p =
\norm{\nabla v}_p^p,
$$
turning \eqref{eq:estim0 G-0} into the following Hölder equality:
$$ 
\I{\Omega}{} (f^+)^{p-1} \, v \, dx
= \norm{(f^+)^{p-1}}_{p'} \norm{v}_p. 
$$
Thus, there exists $C > 0$ such that $v = Cf^+$. Taking $\testf = -f^-$ in \eqref{vf}, we get
$$ \norm{f^-}_{p}^p = -C^{p-1} \I{\Omega}{} \abs{\nabla f^+}^{p-2} \langle \nabla f^+, \nabla f^- \rangle \, dx = 0, 
$$
and therefore $f$ is non-negative: $f = f^+ = C^{-1} v$. Substituting $f$ by $C^{-1} v$ in \eqref{vf}, we see that $v$ is the first eigenfunction and that $\lambda_1(\Omega,p) = C^{1-p} = R[v]$. 

Finally, if $v$ is the first eigenfunction, then it is obvious that $v = \lambda_1^{1/(1-p)} (\Omega, p) \cdot f$ with $R[v] = R^+[f] = \lambda_1 (\Omega,p)$.
\end{proof}

The case of both $v$ and $f$ being sign-changing and satisfying $R^+[v] = R^-[v]$ and $R^+ [f] = R^-[f]$ will be particularly important for our analysis. 
In this case, the algebraic inequality 
\begin{equation}\label{fracrat}
\min_{i=\overline{1,2}} \frac{a_i}{b_i} \leqslant \frac{a_1 + a_2}{b_1 + b_2} \leqslant \max_{i=\overline{1,2}}\frac{a_i}{b_i} \quad \text{for any}~ a_i, b_i > 0, \, i = \overline{1,2}, 
\end{equation}
implies that $R[v] = R^{\pm}[v]$ and $R[f] = R^{\pm}[f]$, so it follows from Proposition~\ref{inequalities system} and  the variational characterization \eqref{eq:lambda2} that
\begin{equation}\label{RRl}
\lambda_2 (\Omega, p)
\leqslant
R[v] = R^{\pm}[v] 
\leqslant
R[f] = R^{\pm}[f].
\end{equation}

\section{Well-posedness and monotonicity}\label{sec:well pos}

In this section, we prove 
Theorem~\ref{th:existence-of-alphaK}, arguing by induction. 
For this purpose, we introduce the following assumption: 
\begin{enumerate}[label={($\mathcal{A}$)}]
\item\label{assumption} Either $k=0$ or after $k \geqslant 1$ iterations we have successfully obtained roots $\alpha_0$, $\dots$, $\alpha_{k-1} \in (0;1)$ of the equations
$$
R^- [\varphi_j(\alpha_j)] 
= 
R^+ [\varphi_j(\alpha_j)], \quad j = \overline{0,k-1},
$$ 
so that the corresponding solutions $u_1 = \varphi_0(\alpha_0)$, $u_2 = \varphi_1(\alpha_1)$, $\dots$, $u_k = \varphi_{k-1} (\alpha_{k-1})$ of \eqref{phi} are defined and all change sign in $\Omega$.
\end{enumerate}

In the following three lemmas, we provide a few properties of solutions $\varphi_k(\alpha)$ to the problem~\eqref{phi}.

\begin{lemma}\label{lem:contin}
Let \ref{assumption} hold. 
Then the mappings
\begin{align}
\label{lem:contin:1}
\alpha \mapsto \norm{\nabla \varphi_k(\alpha)}_{p}
\quad &\text{and} \quad 
\alpha \mapsto \norm{\varphi_k(\alpha)}_{p},\\
\label{lem:contin:2}
\alpha \mapsto \norm{\nabla \varphi_k^{\pm}(\alpha)}_{p}
\quad &\text{and} \quad 
\alpha \mapsto \norm{\varphi_k^{\pm}(\alpha)}_{p}
\end{align}
are continuous in $[0;1]$.
\end{lemma}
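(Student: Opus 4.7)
The plan is to reduce everything to the continuity of the solution map $\alpha \mapsto \varphi_k(\alpha)$ from $[0;1]$ into $W_0^{1,p}(\Omega)$ itself, and then cascade the four listed continuities from there.

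Under hypothesis~\ref{assumption}, $\tilde u_k$ is a well-defined non-zero element of $L^p(\Omega)$: either $u_0 \in L^\infty(\Omega)$ has $\norm{u_0}_p > 0$ by \eqref{eq:u0-signs}, or $u_k$ for $k \geqslant 1$ is sign-changing by the induction hypothesis and is hence non-zero. In particular, $(\tilde u_k^{\pm})^{p-1}$ are fixed elements of $L^{p'}(\Omega)$. Writing $g_\alpha := \alpha (\tilde u_k^+)^{p-1} - \beta(\alpha)(\tilde u_k^-)^{p-1}$, the continuity of $\beta$ makes $\alpha \mapsto g_\alpha$ continuous from $[0;1]$ to $L^{p'}(\Omega)$. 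The continuity of $\alpha \mapsto \varphi_k(\alpha)$ in $W_0^{1,p}(\Omega)$ then follows from the standard continuous dependence of solutions of the $p$-Poisson problem on the source term (cf.\ Appendix~\ref{sec:appendix}), or may be established directly: a uniform a~priori estimate $\norm{\nabla \varphi_k(\alpha_n)}_p^{p-1} \leqslant C\norm{g_{\alpha_n}}_{p'}$ yields a weakly convergent subsequence, and the $S_+$ property of $-\Delta_p$ (equivalently, a Minty--Browder monotonicity passage) upgrades the weak convergence to strong, with the limit identified as $\varphi_k(\alpha_0)$ by uniqueness; since any subsequence admits a sub-subsequence with the same limit, the whole net converges.

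Once $\alpha \mapsto \varphi_k(\alpha)$ is continuous into $W_0^{1,p}(\Omega)$, the map $\alpha \mapsto \norm{\nabla \varphi_k(\alpha)}_p$ is continuous as a composition with the $W_0^{1,p}$-norm, and $\alpha \mapsto \norm{\varphi_k(\alpha)}_p$ is continuous via the embedding $W_0^{1,p}(\Omega) \hookrightarrow L^p(\Omega)$. To obtain \eqref{lem:contin:2} it suffices to invoke the continuity of the truncation map $u \mapsto u^{\pm}$ from $W_0^{1,p}(\Omega)$ into itself; this follows from the formula $\nabla u^{\pm} = \pm \mathbf{1}_{\Omega_u^{\pm}} \nabla u$ by splitting the difference $\mathbf{1}_{\Omega_{u_n}^+}\nabla u_n - \mathbf{1}_{\Omega_u^+}\nabla u$ into a part controlled by $\norm{\nabla u_n - \nabla u}_p$ and a part handled by dominated convergence on $\{u \ne 0\}$, combined with the fact that $\nabla u$ vanishes a.e.\ on $\{u = 0\}$.

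The main obstacle is the continuous dependence of the $p$-Poisson solution on its right-hand side: unlike the linear case $p=2$, one cannot invoke a linear Banach-space estimate, and instead needs the nonlinear $S_+$ argument together with uniqueness. All remaining steps are routine compositions of continuous maps.
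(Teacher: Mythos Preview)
Your proposal is correct and follows essentially the same route as the paper: continuity of $\alpha \mapsto g_\alpha$ in $L^{p'}(\Omega)$, then the continuous-dependence result from Appendix~\ref{sec:appendix} (the paper's Corollary~\ref{right sides convergence yields the solutions convergence}) to get $\alpha \mapsto \varphi_k(\alpha)$ continuous in $W_0^{1,p}(\Omega)$, and finally continuity of $u \mapsto u^{\pm}$ in $W_0^{1,p}(\Omega)$ (which the paper cites from \cite[Lemma~1.22]{Suomalaiset}). Your alternative $S_+$/Minty--Browder argument for the continuous-dependence step is a valid substitute for the paper's explicit estimates in Lemma~\ref{cont lemma}, but this is a difference in the proof of an auxiliary result rather than in the overall strategy.
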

\begin{proof}
The continuity of the mappings \eqref{lem:contin:1} follows from Corollary~\ref{right sides convergence yields the solutions convergence} and the Poincar\'e inequality. 
Hence, the mappings \eqref{lem:contin:2} have the same property in view of, e.g., \cite[Lemma~1.22]{Suomalaiset}. 
\end{proof}

\begin{lemma}\label{lem:monotone}
Let \ref{assumption} hold. 
Then $\varphi_{j}(\alpha) \in C^1(\Omega) \cap L^\infty(\Omega)$ for any $j \leqslant k$. 
Moreover, for any $0 \leqslant t_1 < t_2 \leqslant 1$ we have  $\varphi_k(t_1) \leqslant \varphi_k (t_2)$, so that
\begin{align}
\Omega_{\varphi_k(t_1)}^+ \subset \Omega_{\varphi_k(t_2)}^+ \quad \text{and} \quad \norm{\varphi_k^+(t_1)}_{p} \leqslant \norm{\varphi_k^+(t_2)}_{p}, \\
\Omega_{\varphi_k(t_1)}^- \supset \Omega_{\varphi_k(t_2)}^- \quad \text{and} \quad \norm{\varphi_k^-(t_1)}_{p} \geqslant \norm{\varphi_k^-(t_2)}_{p}, 
\end{align}
where $\Omega_{\varphi_k (\alpha)}^{\pm}$ are defined as in \eqref{nodal sets def}. 
\end{lemma}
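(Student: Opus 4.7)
The statement splits into a regularity claim about $\varphi_j(\alpha)$ for $j \leqslant k$ and a monotonicity claim for the map $\alpha \mapsto \varphi_k(\alpha)$. I would address the two parts independently.

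For the regularity, I would argue by induction on $j$. The base case uses that $u_0 \in L^\infty(\Omega)$ by \eqref{eq:u0-signs}, so that for $k=0$ the right-hand side of \eqref{phi} is pointwise bounded by $\norm{\tilde u_0}_{\infty}^{p-1}$ uniformly in $\alpha \in [0;1]$. Standard $L^\infty$ and $C^{1,\gamma}_{\mathrm{loc}}$ regularity results for the $p$-Poisson problem with bounded data (DiBenedetto/Tolksdorf, collected in Appendix~\ref{sec:appendix}) then yield $\varphi_0(\alpha) \in C^1(\Omega) \cap L^\infty(\Omega)$. The inductive step is identical: once $u_j \in L^\infty(\Omega)$, the same argument gives $\varphi_j(\alpha) \in C^1(\Omega) \cap L^\infty(\Omega)$ for every $\alpha \in [0;1]$, and in particular $u_{j+1} = \varphi_j(\alpha_j) \in L^\infty(\Omega)$, so the induction carries through all $j \leqslant k$ under \ref{assumption}.

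For the monotonicity in $\alpha$, I would invoke the weak comparison principle for $-\Delta_p$. Fix $0 \leqslant t_1 < t_2 \leqslant 1$ and set $w_i := \varphi_k(t_i)$, $i=1,2$. Subtracting the defining identities \eqref{phi} for $w_1$ and $w_2$, we obtain weakly in $\Omega$
$$
-\Delta_p w_2 - (-\Delta_p w_1) = (t_2 - t_1)(\tilde u_k^+)^{p-1} + \prnth{\beta(t_1) - \beta(t_2)}(\tilde u_k^-)^{p-1} \geqslant 0,
$$
because $\beta$ is monotonically decreasing and hence $\beta(t_1) \geqslant \beta(t_2)$. Since $w_1 = w_2 = 0$ on $\partial \Omega$, the weak comparison principle (Appendix~\ref{sec:appendix}) then delivers $w_1 \leqslant w_2$ a.e.\ in $\Omega$, which is precisely the asserted $\varphi_k(t_1) \leqslant \varphi_k(t_2)$.

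The remaining assertions are direct consequences of this pointwise comparison. Wherever $\varphi_k(t_1)(x) > 0$ we also have $\varphi_k(t_2)(x) \geqslant \varphi_k(t_1)(x) > 0$, giving the inclusion $\Omega_{\varphi_k(t_1)}^+ \subset \Omega_{\varphi_k(t_2)}^+$, and analogously $\Omega_{\varphi_k(t_1)}^- \supset \Omega_{\varphi_k(t_2)}^-$. Moreover, pointwise a.e.\ one has $\varphi_k^+(t_2) \geqslant \varphi_k^+(t_1)$ and $\varphi_k^-(t_2) \leqslant \varphi_k^-(t_1)$, so integrating $p$-th powers yields the claimed $L^p$ monotonicity of the norms. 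The only substantive inputs are the classical interior regularity for $p$-Poisson equations and the weak comparison principle, both already collected in the appendix, so I do not foresee a serious obstacle beyond invoking these standard results.
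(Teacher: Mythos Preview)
Your proposal is correct and follows essentially the same route as the paper: an induction on $j$ using the DiBenedetto--Tolksdorf regularity for the $C^1(\Omega)\cap L^\infty(\Omega)$ claim, followed by the weak comparison principle for the monotonicity $\varphi_k(t_1)\leqslant\varphi_k(t_2)$, from which the set inclusions and $L^p$-norm inequalities are read off pointwise. Two small bookkeeping remarks: the $L^\infty$ hypothesis on $u_0$ is part of the setup of \hyperref[alg]{Algorithm~A} rather than of \eqref{eq:u0-signs}, and neither the comparison principle nor the $C^{1,\gamma}$ regularity is actually recorded in Appendix~\ref{sec:appendix}---the paper cites \cite{PS}, \cite{DiBenedetto}, \cite{tolksdorf} directly.
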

\begin{proof}
Since the initial guess $u_0$ is required to be of class $L^{\infty}(\Omega)$, we have
$$
\alpha \prnth{\tilde u_0^+}^{p-1} - \beta(\alpha) \cdot \prnth{\tilde u_0^-}^{p-1}
\in 
L^{\infty}(\Omega)
\quad \text{for any}~ \alpha \in [0;1].
$$
The differentiability results of \cite{DiBenedetto,tolksdorf}, as well as standard bootstrap arguments, assure that
the solution $\varphi_0(\alpha)$ of \eqref{phi} belongs to $C^1(\Omega) \cap L^\infty(\Omega)$ for any $\alpha \in [0;1]$. 
By induction, we have $\varphi_{j}(\alpha) \in C^1(\Omega) \cap L^\infty(\Omega)$ for any $j \leqslant k$. 

Since $\alpha \mapsto \beta(\alpha)$ is decreasing, we have $s_1 = \beta(t_1) > \beta(t_2) = s_2$. 
Therefore, 
$$
t_1 (\tilde u_k^+)^{p-1} \leqslant t_2 (\tilde u_k^+)^{p-1}
\quad \text{and} \quad 
s_1 (\tilde u_k^-)^{p-1} \geqslant s_2 (\tilde u_k^-)^{p-1},
$$
which implies that
$$
t_1 (\tilde u_k^+)^{p-1} - s_1 (\tilde u_k^-)^{p-1} \leqslant t_2 (\tilde u_k^+)^{p-1} - s_2 (\tilde u_k^-)^{p-1}.
$$
Due to the weak comparison principle (see, e.g., \cite[Theorem~2.4.1]{PS}), we get $\varphi_k(t_1) \leqslant \varphi_k (t_2)$. 
In particular, $0 \leqslant \varphi_k^+ (t_1) \leqslant \varphi_k^+ (t_2)$, which yields 
$$ 
\Omega_{\varphi_k(t_1)}^+ \subset \Omega_{\varphi_k(t_2)}^+ \quad \text{and} \quad \norm{\varphi_k^+(t_1)}_{p} \leqslant \norm{\varphi_k^+(t_2)}_{p}. 
$$
The assertions for $\varphi_k^-(\cdot)$ are obtained in the same way.
\end{proof}

\begin{lemma}\label{lem: phi nodal sets}
Let \ref{assumption} hold.
Then for any $\alpha \in [0;1]$ the nodal set of $\varphi_j (\alpha)$, $j\leqslant k$, has measure zero, that is, 
\begin{equation}\label{eq:uk-zero-measure}
\Big|\ungulata{x \in \Omega:~ (\varphi_{j}(\alpha))(x) = 0}\Big| = 0 \quad \text{for any } j\leqslant k, ~\alpha \in [0;1].
\end{equation}
\end{lemma}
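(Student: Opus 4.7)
The plan is to prove the statement by induction on $j \in \{0, 1, \dots, k\}$, with inductive claim $(S_j)$: the nodal set of $\varphi_j(\alpha)$ has measure zero for every $\alpha \in [0;1]$. The base case $j=0$ uses directly the assumption \eqref{eq:u0-signs} that $|\{u_0 = 0\}| = 0$, whereas in the inductive step I would use $(S_{j-1})$ at $\alpha = \alpha_{j-1}$, combined with the identity $u_j = \varphi_{j-1}(\alpha_{j-1})$, to conclude $|\{u_j = 0\}| = 0$. In both settings, assumption \ref{assumption} further gives $u_j^\pm \neq 0$, so $u_j$, $u_j^+$, $u_j^-$ are all nontrivial.

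The endpoints $\alpha \in \{0,1\}$ are handled at once by the strong maximum principle: at $\alpha = 1$ the right-hand side of \eqref{phi} equals $(\tilde u_j^+)^{p-1} \geqslant 0$, not identically zero, hence $\varphi_j(1) > 0$ throughout $\Omega$; symmetrically $\varphi_j(0) < 0$ in $\Omega$. Both nodal sets are empty. For the interior case $\alpha \in (0;1)$, set $v := \varphi_j(\alpha)$ and $g := \alpha(\tilde u_j^+)^{p-1} - \beta(\alpha)(\tilde u_j^-)^{p-1}$. Since $\alpha, \beta(\alpha) > 0$, the set $\{g = 0\}$ coincides with $\{u_j = 0\}$, so $g \neq 0$ almost everywhere in $\Omega$. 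Arguing by contradiction, suppose $|\{v = 0\}| > 0$. Since $v \in W_0^{1,p}(\Omega)$, the standard level-set identity yields $\nabla v = 0$ a.e.\ on $\{v = 0\}$, so the vector field $W := |\nabla v|^{p-2} \nabla v$ vanishes a.e.\ on $\{v = 0\}$.

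Appealing to a Sobolev-regularity fact for $p$-Poisson solutions with bounded right-hand side (to be stated as an auxiliary result, likely placed in Appendix~\ref{sec:appendix}) that provides $W \in W^{1,1}_{\mathrm{loc}}(\Omega, \mathbb{R}^D)$, and applying the level-set identity componentwise to each $W_i$, one obtains $\partial_i W_i = 0$ a.e.\ on $\{W_i = 0\} \supseteq \{v = 0\}$, whence
$$ -g \;=\; \div W \;=\; \sum_{i=1}^{D} \partial_i W_i \;=\; 0 \quad \text{a.e.\ on}~ \{v = 0\}, $$
contradicting $g \neq 0$ a.e. This closes the interior case and thus the induction. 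The main obstacle I expect is precisely this Sobolev regularity of $W$: deducing $\div W = 0$ a.e.\ on $\{W = 0\}$ rigorously requires fine regularity theory for $p$-Poisson problems and must be made uniform in $p \in (1, \infty)$. The remaining pieces — the maximum principle at the endpoints, the level-set identity for Sobolev functions, and the inductive bookkeeping — are essentially routine.
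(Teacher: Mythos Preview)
Your proposal is correct and follows the same inductive skeleton as the paper: base case $j=0$, inductive step via $u_j = \varphi_{j-1}(\alpha_{j-1})$, and the endpoints $\alpha \in \{0,1\}$ handled by the strong maximum principle. The only divergence is in the interior case $\alpha \in (0;1)$. There the paper does not argue from scratch but simply invokes \cite[Corollary~1.1]{Lou} (see also \cite[Corollary~1.7]{ACF}), which deliver precisely the implication ``right-hand side nonzero a.e.\ $\Rightarrow$ nodal set of the solution is null''. Your contradiction argument via $W = |\nabla v|^{p-2}\nabla v$ and the level-set identity is essentially the mechanism underlying those references, and you correctly isolate the Sobolev regularity $W \in W^{1,1}_{\mathrm{loc}}$ as the crux. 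That regularity is a genuinely nontrivial piece of $p$-Laplace theory --- settled in full generality for all $p>1$ only recently in \cite{ACF} --- so rather than reproving it in an appendix you should cite these references directly, as the paper does.
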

\begin{proof}
First consider $j = 0$. Recall that $\varphi_0 (\alpha)$ solves \eqref{phi} with $u_0$ satisfying \eqref{eq:u0-signs}.
Then, for $\alpha =0$, the conclusion \eqref{eq:uk-zero-measure} follows from the first assumption in \eqref{eq:u0-signs} and the strong maximum principle: 
$$
-\Delta_p \varphi_0 (0) = -(\tilde u_0^-)^{p-1} \leqslant 0 \quad \text{and} \quad u_0^- \ne 0
\quad \text{imply that} \quad 
\varphi_0 (0) < 0.
$$
The case $\alpha=1$ is covered analogously.     
For $0 < \alpha < 1$, \eqref{eq:uk-zero-measure} follows from the second assumption in \eqref{eq:u0-signs} by the result of \cite[Corollary~1.1]{Lou} (see also \cite[Corollary~1.7]{ACF}). 

Next, suppose $k \geqslant 1$ and consider $j = 1$. Recall that in this case we assume (see \ref{assumption}) that the root $\alpha_0 \in (0;1)$ of the equation $R^+[\varphi_0 (\alpha)] = R^-[\varphi_0 (\alpha)]$ exists, where $\varphi_0 (\alpha_0) =: u_1$ changes sign in $\Omega$ and, as has just been shown, has the nodal set of measure zero. Repeating the above arguments for  $\varphi_1(\alpha)$, we conclude that \eqref{eq:uk-zero-measure} holds for $j = 1$. Continuing by induction, we establish \eqref{eq:uk-zero-measure} for any $j \leqslant k$.  
\end{proof}

Lemmas~\ref{lem:contin}, \ref{lem:monotone},  and \ref{lem: phi nodal sets} allow us to prove the existence of $\alpha_k$ solving the equation \eqref{eq:R+=R-} under the assumption \ref{assumption}.

\begin{proposition}\label{prop:existence-of-alphaK}
Let \ref{assumption} hold. Then there exists $\alpha_k \in (0;1)$ such that $R^+[\varphi_k (\alpha_k)] = R^- [\varphi_k (\alpha_k)]$. 
As a consequence, the sign-changing solution $u_{k+1} = \varphi_k(\alpha_k)$ of \eqref{phi2} is defined. 
\end{proposition}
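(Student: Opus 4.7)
My plan is to apply an intermediate-value argument to the difference $F(\alpha) := R^+[\varphi_k(\alpha)] - R^-[\varphi_k(\alpha)]$, after first localizing the region where both $R^{\pm}$ are well-defined. Set
\begin{equation*}
	\underline{\alpha} := \sup\{\alpha \in [0;1] : \varphi_k^+(\alpha) = 0\}, \qquad \overline{\alpha} := \inf\{\alpha \in [0;1] : \varphi_k^-(\alpha) = 0\}.
\end{equation*}
From $-\Delta_p \varphi_k(0) = -(\tilde u_k^-)^{p-1} \leqslant 0,\ \not\equiv 0$ and $-\Delta_p \varphi_k(1) = (\tilde u_k^+)^{p-1} \geqslant 0,\ \not\equiv 0$, the strong maximum principle yields $\varphi_k(0) < 0$ and $\varphi_k(1) > 0$ in $\Omega$, so $0 \in \{\varphi_k^+ = 0\}$ and $1 \in \{\varphi_k^- = 0\}$. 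Combined with the pointwise monotonicity of Lemma~\ref{lem:monotone} and the continuity from Lemma~\ref{lem:contin}, the two defining sets are closed initial (resp.\ final) segments of $[0;1]$, which gives $\underline{\alpha} < 1$, $\overline{\alpha} > 0$, together with $\varphi_k^+(\underline{\alpha}) = 0$ and $\varphi_k^-(\overline{\alpha}) = 0$.

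Next I would establish the strict inequality $\underline{\alpha} < \overline{\alpha}$. If equality held at some $\alpha^* := \underline{\alpha} = \overline{\alpha}$, then both parts of $\varphi_k(\alpha^*)$ would vanish, hence $\varphi_k(\alpha^*) \equiv 0$; plugging this into \eqref{phi} leaves the identity $\alpha^* (\tilde u_k^+)^{p-1} = \beta(\alpha^*)(\tilde u_k^-)^{p-1}$ a.e.\ in $\Omega$. Since $u_k$ is sign-changing by \ref{assumption}, the supports of $\tilde u_k^+$ and $\tilde u_k^-$ are disjoint and of positive measure, forcing $\alpha^* = 0$ and $\beta(\alpha^*) = 0$ simultaneously, contradicting $\beta(0) = 1$.

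The central (and hardest) step is to show $R^+[\varphi_k(\alpha)] \to +\infty$ as $\alpha \to \underline{\alpha}^+$, with the analogous behavior of $R^-$ at $\overline{\alpha}^-$. By Lemma~\ref{lem:monotone}, the positivity sets $\Omega_{\varphi_k(\alpha)}^+$ are nested and decrease as $\alpha \to \underline{\alpha}^+$ to $A := \bigcap_{\alpha > \underline{\alpha}} \Omega_{\varphi_k(\alpha)}^+$. Using $L^p$-continuity from Lemma~\ref{lem:contin} and passing to an a.e.\ convergent subsequence, for a.e.\ $x \in A$ we get $\varphi_k(\underline{\alpha})(x) \geqslant 0$; this together with $\varphi_k^+(\underline{\alpha}) = 0$ forces $\varphi_k(\underline{\alpha}) = 0$ a.e.\ on $A$, so Lemma~\ref{lem: phi nodal sets} gives $|A| = 0$ and thus $|\Omega_{\varphi_k(\alpha)}^+| \to 0$. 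A standard Poincar\'e/Sobolev inequality in shrinking subdomains of $\Omega$ then yields a bound of the form $R^+[\varphi_k(\alpha)] \geqslant c\, |\Omega_{\varphi_k(\alpha)}^+|^{-\sigma}$ for some $c,\sigma > 0$ depending only on $p$, $D$, and $|\Omega|$, producing the required blow-up. The main obstacle lies precisely here: linking the monotone set-shrinkage to the nodal-set property of Lemma~\ref{lem: phi nodal sets}, and then converting the measure decay into a quantitative blow-up of the Rayleigh quotient that is uniform in $p > 1$.

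Finally, on the open interval $(\underline{\alpha}, \overline{\alpha})$ both norms $\|\varphi_k^{\pm}(\alpha)\|_p$ are positive and continuous (Lemma~\ref{lem:contin}), so $F$ is well-defined and continuous with $F(\alpha) \to +\infty$ as $\alpha \to \underline{\alpha}^+$ and $F(\alpha) \to -\infty$ as $\alpha \to \overline{\alpha}^-$; the intermediate value theorem produces $\alpha_k \in (\underline{\alpha}, \overline{\alpha}) \subset (0;1)$ with $F(\alpha_k) = 0$, which is \eqref{eq:R+=R-}. Since $\alpha_k$ lies strictly between the two thresholds, $\varphi_k^{\pm}(\alpha_k) \neq 0$, so $u_{k+1} := \varphi_k(\alpha_k)$ is the desired sign-changing solution of \eqref{phi2}.
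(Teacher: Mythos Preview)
Your proposal is correct and follows essentially the same route as the paper: define thresholds $\underline{\alpha}=\alpha_{\min}$ and $\overline{\alpha}=\alpha_{\max}$, use the strong maximum principle at the endpoints, exploit the nested positivity sets together with Lemma~\ref{lem: phi nodal sets} to force $|\Omega_{\varphi_k(\alpha)}^+|\to 0$, convert this into blow-up of $R^+$ (the paper phrases this via Faber--Krahn, you via Poincar\'e/Sobolev, which is the same mechanism), and conclude by the intermediate value theorem. One minor point: when proving $\underline{\alpha}<\overline{\alpha}$ you argue only against equality, but the case $\underline{\alpha}>\overline{\alpha}$ is disposed of by the identical contradiction (pick any $\alpha^*\in[\overline{\alpha},\underline{\alpha}]$), so this is a presentational rather than mathematical gap.
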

\begin{proof}
In order to prove the statement, we will demonstrate the existence of  $\alpha_{\min} < \alpha_{\max}$ such that both $R^+[\varphi_k (\alpha)]$ and $R^- [\varphi_k (\alpha)]$ are defined in $(\alpha_{\min}; \alpha_{\max})$ with 
\begin{alignat}{2}\label{limits left}
&R^- [\varphi_k (\alpha_{\min})] < \infty,  
\quad &&R^+ [\varphi_k (\alpha_{\min} + 0)] = \infty, \\ 
\label{limits right}  
&R^- [\varphi_k (\alpha_{\max} - 0)] = \infty, 
\quad &&R^+ [\varphi_k (\alpha_{\max})] < \infty. 
\end{alignat}
If so, the function $F(\alpha) := R^+ [\varphi_k (\alpha)] - R^-[\varphi_k (\alpha)]$ will be continuous in $(\alpha_{\min} ; \alpha_{\max})$ by Lemma~\ref{lem:contin}, at the same time exhibiting the following behavior at the interval's boundaries:
\begin{equation}\label{F(a)} 
F(\alpha_{\min} + 0) = +\infty, \quad F(\alpha_{\max} - 0) = -\infty, 
\end{equation}
which implies the existence of the desired root $\alpha_k \in (\alpha_{\min}; \alpha_{\max})$.

Since for $\alpha = 0$ the right-hand side of \eqref{phi} is non-positive and not identically zero, the strong maximum principle implies $\varphi_k (0) < 0$ in $\Omega$, i.e.,  $\norm{\varphi_k^+ (0)}_{p} = 0$. 
Recalling that the mapping $\alpha \mapsto \norm{\varphi_k^+(\alpha)}_{p}$ is continuous and monotone in $[0;1]$ (see Lemmas~\ref{lem:contin} and \ref{lem:monotone}), 
we can define
\begin{equation}\label{def alpha min}
\alpha_{\min} :=  
\inf\big\{
\alpha \in (0;1):~
\norm{\varphi_k^+ (\alpha)}_{p} > 0
\big\}.
\end{equation}
In particular, we have $\alpha_{\min} \in [0;1)$, $\norm{\varphi_k^+ (\alpha_{\min})}_{p} = 0$, and $\varphi_k^-(\alpha_{\min}) \ne 0$. 
Therefore, $R^-[\varphi_k(\alpha_{\min})]$ is finite, while $R^+[\varphi_k
(\alpha)]$ is undefined for $\alpha \in [0;\alpha_{\min}]$. Let us show, however, that $R^+[\varphi_k (\alpha_{\min}+0)]  =  \infty$.

From Lemma~\ref{lem:monotone}, for any $t_1 > \ldots > t_n > \ldots$ such that $t_n \to \alpha_{\min}$,  there exists a pointwise limit $\phi$ of $\ungulata{\varphi_k (t_n)}$, which by the $L^p(\Omega)$-continuity of $\varphi_k(\alpha)$ must coincide with $\varphi_k (\alpha_{\min})$ a.e.\ in $\Omega$. 
Recall that $\varphi_k (\alpha_{\min}) \leqslant 0$ in $\Omega$. 
Since by Lemma~\ref{lem:monotone} we have
\begin{equation}\label{eq:monotinicity1}
\Omega_{\varphi_k(t_1)}^+ \supset \ldots \supset \Omega_{\varphi_k(t_n)}^+ \supset \ldots ,
\end{equation}
the non-positivity of $\varphi_k(\alpha_{\min})$ and the pointwise monotone convergence $\varphi_k (t_n) \to \varphi_k (\alpha_{\min})$ imply the inclusion
$$ 
\bigcap_{n=1}^{\infty} \Omega_{\varphi_k(t_n)}^+ \subset \ungulata{x \in \Omega:~ (\varphi_{k}(\alpha_{\min}))(x) = 0} \cup \mathcal{N}, 
$$
where the set $\mathcal{N} := \big\{x \in \Omega: \phi(x) \neq (\varphi_{k}\big(\alpha_{\min})\big)(x)\big\}$ has measure zero. 
Therefore, \eqref{eq:uk-zero-measure}, \eqref{eq:monotinicity1}, the arbitrariness of $\{t_n\}$, and the continuity of measure yield
$$		
\abs{\Omega_{\varphi_k(\alpha)}^+} \to 0 \quad \text{as}~ \alpha \to \alpha_{\min}+0.
$$
As noted in  Lemma~\ref{lem:monotone}, the function $\varphi_k (\alpha)$ is continuous in $\Omega$, and hence we conclude $\varphi_k^+(\alpha) \in W_0^{1,p}\big(\Omega_{\varphi_k(\alpha)}^+\big)$  by \cite[Lemma~5.6]{Cuesta 1}. 
Consequently, the Faber-Krahn inequality ensures that
$$ 
R^+[\varphi_k (\alpha)] 
\geqslant \lambda_1 \prnth{\Omega_{\varphi_k(\alpha)}^+, p} \geqslant 
|B|^{p/D} \lambda_1(B,p)\,\abs{\Omega_{\varphi_k(\alpha)}^+}^{-p/D} \to \infty
\quad \text{as}~ \alpha \to \alpha_{\min}+0,
$$
where $B$ is any open ball in $\mathbb{R}^D$.
That is, the proof of \eqref{limits left} is complete.

Now we note that for $\alpha = 1$ the right-hand side of \eqref{phi} is non-negative and not identically zero, and hence the strong maximum principle implies $\varphi_k (1) > 0$ in $\Omega$, i.e.,  $\norm{\varphi_k^- (1)}_{p} = 0$. In the same manner as above, we define
\begin{equation}\label{def alpha max}
\alpha_{\max} :=  
\sup\big\{
\alpha \in (0;1):~
\norm{\varphi_k^- (\alpha)}_{p} > 0
\big\}
\end{equation}
and establish \eqref{limits right}. It is obvious that $\alpha_{\min} < \alpha_{\max}$, since otherwise $\varphi_k(\alpha) = 0$ for $\alpha_{\max} \leqslant \alpha \leqslant \alpha_{\min}$, while the right-hand side of \eqref{phi} is never identically zero.

By the definitions \eqref{def alpha min} and \eqref{def alpha max}, both $\norm{\varphi_k^+(\alpha)}_{p}$ and $\norm{\varphi_k^-(\alpha)}_{p}$ are non-zero for $\alpha \in (\alpha_{\min};\alpha_{\max})$, so that by Lemma~\ref{lem:contin} the functions $R^{\pm}[\varphi_k (\cdot)]$ are continuous in $(\alpha_{\min};\alpha_{\max})$. Then \eqref{F(a)} assures the existence of $\alpha_k \in (\alpha_{\min};\alpha_{\max})$. 
\end{proof}

Given the result of Proposition~\ref{prop:existence-of-alphaK}, we can finally justify the assumption \ref{assumption} for all $k \geqslant 0$ and complete the proof of Theorem~\ref{th:existence-of-alphaK}.

\begin{proof}[Proof of Theorem~\ref{th:existence-of-alphaK}]

Note that for $k = 0$ the assumption \ref{assumption} holds trivially, so that Proposition~\ref{prop:existence-of-alphaK} guarantees the existence of $\alpha_0 \in (0;1)$ solving $R^+[\varphi_0 (\alpha_0)] = R^-[\varphi_0(\alpha_0)]$, and we let $u_1 = \varphi_0 (\alpha_0)$.
The assumption \ref{assumption} is thus satisfied for $k = 1$, implying the existence of $\alpha_1 \in (0;1)$ such that $R^+[\varphi_1 (\alpha_1)] = R^-[\varphi_1 (\alpha_1)]$.
This in turn means that \ref{assumption} holds for $k = 2$.
Continuing by induction, we see that \ref{assumption} is true for any $k \geqslant 0$.
Applying Proposition~\ref{prop:existence-of-alphaK}, we assure the existence of infinite sequences $\ungulata{\alpha_k}$ and $\ungulata{u_k}$, $u_{k+1} = \varphi_k (\alpha_k)$, solving \eqref{eq:R+=R-} and \eqref{phi2}, respectively. 
Taking \eqref{fracrat} into account, we derive \eqref{eq:RRR}. 
Applying \eqref{RRl} to
\begin{equation}
v = u_{k+2},
\qquad 
f =  \alpha_{k+1}^{1/(p-1)} \tilde u_{k+1}^+ - \beta_{k+1}^{1/(p-1)} \tilde u_{k+1}^-, 
\end{equation}
we further conclude \eqref{prop:monotonicity}. 
Finally, Proposition~\ref{inequalities system} states that $R[u_{k+2}] = R[u_{k+1}]$ if and only if $u_{k+1}$ is an eigenfunction.
\end{proof}

As a corollary of Theorem~\ref{th:existence-of-alphaK}, we see that the normalized sequence $\ungulata{\tilde u_k}$ is bounded in $W_0^{1,p}(\Omega)$:
\begin{equation}\label{boundedness for tildas}
\norm{\nabla \tilde u_k}_{p}^p = R[\tilde u_k] \leqslant R[\tilde u_1] < \infty, \quad k \geqslant 1.
\end{equation}
To conclude this section, let us show that the same is true for the unscaled sequence $\ungulata{u_k}$ as well -- we will need this fact later in Section~\ref{sec:conv}.

\begin{lemma}\label{lem:bounds}
The sequence $\{u_k\}$ is bounded in $\Wo$, and, consequently, in $L^p(\Omega)$.
\end{lemma}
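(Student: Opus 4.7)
The plan is to test the $p$-Poisson equation \eqref{phi2} against $u_{k+1}$ itself and exploit the normalization $\|\tilde u_k\|_p = 1$ to obtain a uniform bound on the right-hand side in $L^{p'}(\Omega)$. Since $\|\tilde u_k^+\|_p, \|\tilde u_k^-\|_p \leq 1$, both $(\tilde u_k^+)^{p-1}$ and $(\tilde u_k^-)^{p-1}$ have $L^{p'}(\Omega)$-norm at most $1$, and moreover $\alpha_k, \beta_k \in (0;1)$ by Theorem~\ref{th:existence-of-alphaK}. Therefore, the right-hand side of \eqref{phi2} is bounded in $L^{p'}(\Omega)$ uniformly in $k$.

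Concretely, I would test \eqref{phi2} with $\psi = u_{k+1}$ to get
$$
\|\nabla u_{k+1}\|_p^p = \int_\Omega \Big[\alpha_k (\tilde u_k^+)^{p-1} - \beta_k (\tilde u_k^-)^{p-1}\Big] u_{k+1}\, dx,
$$
and then apply the Hölder inequality term by term together with $\|\tilde u_k^\pm\|_p^{p-1} \leq 1$ and $\alpha_k + \beta_k < 2$ to obtain
$$
\|\nabla u_{k+1}\|_p^p \leq (\alpha_k + \beta_k)\, \|u_{k+1}\|_p \leq 2\, \|u_{k+1}\|_p.
$$
An application of the Poincar\'e inequality $\|u_{k+1}\|_p \leq C_P \|\nabla u_{k+1}\|_p$ then yields
$$
\|\nabla u_{k+1}\|_p^{p-1} \leq 2C_P,
$$
so that $\|\nabla u_{k+1}\|_p$ is uniformly bounded by $(2C_P)^{1/(p-1)}$ for all $k \geqslant 0$. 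Combined with the initial value $u_0 \in L^\infty(\Omega) \subset L^p(\Omega)$, this gives the desired $\Wo$-boundedness of $\{u_k\}$, and the $L^p(\Omega)$-boundedness follows from Poincar\'e once more.

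There is no real obstacle here: the argument is a direct energy estimate, and the only thing to be careful about is that $\alpha_k$ and $\beta_k$ remain strictly less than one so that the constant in the estimate does not depend on $k$, which is precisely what Theorem~\ref{th:existence-of-alphaK} guarantees. Note also that this uniform bound is stronger than what \eqref{boundedness for tildas} provides, since the latter controls only the Rayleigh quotient of $u_k$, whereas the present argument bounds the unscaled $W_0^{1,p}(\Omega)$-norm directly.
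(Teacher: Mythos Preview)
Your argument is correct. Testing \eqref{phi2} with $u_{k+1}$, applying H\"older, and closing with Poincar\'e gives a uniform bound on $\norm{\nabla u_{k+1}}_p$; this is a clean energy estimate and works exactly as you describe.

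The paper takes a slightly different route: it splits $u_{k+1}$ into its positive and negative parts and applies the estimates \eqref{eq:estim0 G-0} and \eqref{eq:estim0 G-2} from Section~\ref{sec:properties} separately to $u_{k+1}^+$ and $u_{k+1}^-$, obtaining
$$
\norm{\nabla u_{k+1}^\pm}_p^{p-1} \leqslant \lambda_1^{-1/p}(\Omega,p)\,\norm{\tilde u_k^\pm}_p^{p-1},
$$
and then sums the $p/(p-1)$-th powers, using $\norm{\tilde u_k^+}_p^p + \norm{\tilde u_k^-}_p^p = 1$ to arrive at $\norm{\nabla u_{k+1}}_p^p \leqslant \lambda_1^{1/(1-p)}(\Omega,p)$. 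Your approach is more elementary and avoids the detour through the $\pm$ decomposition; the paper's approach yields a sharper constant (no factor of~$2$) because the disjoint supports of $\tilde u_k^\pm$ are exploited. Either way the conclusion is the same, and the sharper constant is irrelevant for the lemma.

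One minor remark: your sentence about ``combined with the initial value $u_0 \in L^\infty(\Omega) \subset L^p(\Omega)$'' giving $\Wo$-boundedness is slightly off, since $u_0$ need not lie in $\Wo$ at all. But the lemma (and the paper's own proof) is effectively about $u_k$ for $k \geqslant 1$, so this does not affect the substance.
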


\begin{proof}
Recall from Theorem~\ref{th:existence-of-alphaK} that each $u_k$ is sign-changing.
The estimates \eqref{eq:estim0 G-0} and \eqref{eq:estim0 G-2} for $v$, $f$ defined by 
\begin{equation}\label{u_k to f}
v = u_{k+1},
\qquad
f =  \alpha_k^{1/(p-1)} \tilde u_k^+ - \beta_k^{1/(p-1)} \tilde u_k^-,
\end{equation}
read as
\begin{equation}\label{Left sign W-norm estimate} 
\norm{\nabla u_{k+1}^+}_{p}^p 
\leqslant \alpha_k \norm{u_{k+1}^+}_{p} \cdot \norm{\tilde u_k^+}_{p}^{p-1},
\quad 
\norm{\nabla u_{k+1}^-}_{p}^p 
\leqslant \beta_k \norm{u_{k+1}^-}_{p} \cdot \norm{\tilde u_k^-}_{p}^{p-1}.
\end{equation}
Dividing by $\norm{\nabla u_{k+1}^{+}}_{p}$ and $\norm{\nabla u_{k+1}^{-}}_{p}$, respectively, applying \eqref{lambda_1 var}, and recalling that $\ungulata{\alpha_k}, \ungulata{\beta_k} \subset (0;1)$, we get
\begin{equation}\label{murituri}
\norm{\nabla u_{k+1}^+}_{p}^{p-1} \leqslant \lambda_1^{-1/p} (\Omega,p) \norm{\tilde u_k^{+}}_{p}^{p-1},
\quad
\norm{\nabla u_{k+1}^-}_{p}^{p-1} \leqslant \lambda_1^{-1/p} (\Omega,p) \norm{\tilde u_k^{-}}_{p}^{p-1}.
\end{equation}
Raising to the power of $p/(p-1)$ and summing the resulting inequalities, we obtain
$$ 
\norm{\nabla u_{k+1}}_{p}^p \leqslant \lambda^{\frac{1}{1-p}}_1(\Omega,p) \prnth{\norm{\tilde u_k^+}_{p}^p + \norm{\tilde u_k^-}_{p}^p} = \lambda^{\frac{1}{1-p}}_1(\Omega,p), 
$$
which means the desired boundedness of $\{u_{k}\}$ in $\Wo$.
\end{proof}

\section{Convergence}\label{sec:conv}

In this section, we prove Theorem~\ref{thm:main} on the convergence of the sequence $\ungulata{u_k}$ generated by \hyperref[alg]{Algorithm A}. 
The arguments have the following structure. 
First, in Section~\ref{sec:conv1}, we show that the value $R^*$ given by \eqref{R*}
(note that its existence follows immediately from \eqref{prop:monotonicity}) is indeed a higher eigenvalue, and that any strong partial limit of the normalized sequence $\{\tilde u_k\}$ in $\Wo$ is an eigenfunction associated with $R^*$. 
Then we use this result to derive the proof of  Theorem~\ref{thm:main} in Section~\ref{sec:conv:main}.

From now on, we suppose $k \geqslant 1$, since the Rayleigh quotient of an arbitrary initial guess $u_0 \in L^{\infty}(\Omega)$ satisfying \eqref{eq:u0-signs} may be undefined.

Before proceeding with the arguments, we provide a few terminological conventions on our treatment of subsequences. 
On many occasions below, while working with a
sequence $\ungulata{a_m}$, only the properties of some properly chosen subsequence $\ungulata{a_{m_l}}$ will be of our interest. 
In this case, we will sometimes say that we \textit{reduce} or \textit{thin out} the original sequence, meaning that we proceed investigating $\ungulata{a_{m_l}}$ while referring to it simply as $\ungulata{a_m}$. 
For instance, any subsequence $\ungulata{u_{k_n}}$ of $\ungulata{u_k}$, being bounded in $\Wo$ by Lemma~\ref{lem:bounds}, can be reduced in such a way that $u_{k_n} \to u$ weakly in $\Wo$ for some $u$; 
by the Rellich-Kondrachov theorem there is a further thin-out of $\ungulata{u_{k_n}}$ which converges to $u$ strongly in $L^p(\Omega)$, and so on.

\subsection{Preliminary results}\label{sec:conv1}

Since the sequence $\{\tilde u_k\}$ is by construction normalized in $L^p (\Omega)$, and therefore separated from zero, the properties of its limit points are somewhat easier to investigate compared to the unscaled sequence $\{u_k\}$. For the convenience of further analysis, we write the weak form of the problem \eqref{phi2} as 
\begin{equation}\label{renormed} 
\I{\Omega}{}\abs{\nabla \tilde u_{k + 1}}^{p-2} \langle \nabla \tilde u_{k + 1}, \nabla \testf \rangle \,dx = \I{\Omega}{}  \quadr{(\hat u_{k}^+)^{p-1} - (\hat u_{k}^-)^{p-1}} \testf \,dx \quad \text{for any } \testf \in W_0^{1,p}(\Omega), 
\end{equation}
where the non-linear scaling $\hat u_k$ of $u_k$ is defined via
\begin{equation}\label{hats} 
\hat u_{k}^+ \coloneqq \frac{\alpha_{k}^{1/(p-1)}}{\norm{u_{k + 1}}_{p}} \tilde u_{k}^+, \quad \hat u_{k}^- \coloneqq \frac{\beta_{k}^{1/(p-1)}}{\norm{u_{k + 1}}_{p}} \tilde u_{k}^-.
\end{equation}
We can thus view $\{\tilde u_{k+1}\}$ as a sequence of the solutions to the $p$-Poisson equations with the right-hand sides of the form $|\hat u_k|^{p-2}\hat u_k$. 

\begin{lemma}\label{boundedness for hats}
The sequences $\ungulata{\hat u_k^{\pm}}$ are bounded in $\Wo$.
\end{lemma}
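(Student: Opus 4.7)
The plan is to exploit the fact that $\hat u_k^{+}$ and $\hat u_k^{-}$ are merely positive scalar multiples of $\tilde u_k^{+}$ and $\tilde u_k^{-}$, so that all the relevant Rayleigh quotients are preserved under the rescaling \eqref{hats}. Once this scale-invariance is observed, the task reduces to controlling $\norm{\hat u_k}_{p}$ from above, which can be done by applying a Proposition~\ref{inequalities system}-type estimate to equation \eqref{renormed}.

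Concretely, I would first note that $\hat u_k^{\pm} = c_k^{\pm}\, \tilde u_k^{\pm}$ for some positive constants $c_k^{\pm}$, so that $R^{\pm}[\hat u_k] = R^{\pm}[\tilde u_k] = R^{\pm}[u_k]$. By \eqref{eq:RRR} in Theorem~\ref{th:existence-of-alphaK}, the right-hand side equals $R[u_k]$, and by the algebraic inequality \eqref{fracrat} applied to the numerator and denominator of $R[\hat u_k]$ the same conclusion holds for $R[\hat u_k]$ itself. The monotonicity property \eqref{prop:monotonicity} then gives the uniform bound $R[\hat u_k] = R^{\pm}[\hat u_k] = R[u_k] \leqslant R[u_1]$ for all $k \geqslant 1$.

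Next, I would view \eqref{renormed} as an instance of \eqref{vf0} with $v = \tilde u_{k+1}$ and $f = \hat u_k$ in order to apply the inequality \eqref{eq:estim0 G-0 no signs}, obtaining $\norm{\hat u_k}_{p}^{p-1} \leqslant R^{1/p}[\hat u_k]\, \norm{\nabla \tilde u_{k+1}}_{p}^{p-1}$. Since $\norm{\nabla \tilde u_{k+1}}_{p}^{p} = R[u_{k+1}] \leqslant R[u_1]$, both factors on the right-hand side are bounded by constants independent of $k$, so $\norm{\hat u_k}_{p}$ is uniformly bounded, and hence so are $\norm{\hat u_k^{\pm}}_{p}$. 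Finally, combining this with the identity $\norm{\nabla \hat u_k^{\pm}}_{p}^{p} = R^{\pm}[\hat u_k]\, \norm{\hat u_k^{\pm}}_{p}^{p}$ and the already established bound on $R^{\pm}[\hat u_k]$ yields the $\Wo$-boundedness claimed in the lemma. No serious obstacle is expected; the only step that requires any insight is the recognition that $\hat u_k$ plays the role of the right-hand side $f$ in the $p$-Poisson problem \eqref{vf0} for $v = \tilde u_{k+1}$, which is precisely what unlocks the reuse of the Section~\ref{sec:properties} machinery.
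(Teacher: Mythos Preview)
Your argument is correct and, if anything, a little cleaner than the paper's. Both proofs rest on the same two ingredients --- the scale-invariance $R^{\pm}[\hat u_k]=R^{\pm}[\tilde u_k]=R[u_k]\leqslant R[u_1]$ and a Section~\ref{sec:properties} estimate applied to the $p$-Poisson relation linking $u_{k+1}$ to $u_k$ --- but they deploy them differently. The paper works with the \emph{unscaled} equation $-\Delta_p u_{k+1}=|f_k|^{p-2}f_k$, applies the ``half'' estimate \eqref{Right sign L-norm estimate G-2} (test function $-f^-$), and then tracks the factors $\beta_k$ and $\norm{u_{k+1}}_p$ by hand to reassemble a bound on $\norm{\nabla\hat u_k^-}_p$. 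You instead use the already-rescaled equation \eqref{renormed} directly as an instance of \eqref{vf0} with $v=\tilde u_{k+1}$, $f=\hat u_k$, and apply the ``full'' estimate \eqref{eq:estim0 G-0 no signs} to bound $\norm{\hat u_k}_p$ in one stroke; the split into $\pm$ parts then becomes trivial. Your route avoids the explicit bookkeeping of $\alpha_k,\beta_k,\norm{u_{k+1}}_p$ altogether, at the modest cost of first observing that the disjoint supports of $\tilde u_k^{\pm}$ guarantee $(\hat u_k)^{\pm}=\hat u_k^{\pm}$ so that \eqref{renormed} really is of the form \eqref{vf0}.
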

\begin{proof}
Consider the sequence of negative parts and observe that the estimate \eqref{Right sign L-norm estimate G-2} with the choice
\begin{equation}
v = u_{k+1}, 
\qquad 
f =  \alpha_k^{1/(p-1)} \tilde u_k^+ - \beta_k^{1/(p-1)} \tilde u_k^-,
\end{equation}
reads as 
\begin{equation}\label{Right sign L-norm estimate}
\beta_k \norm{\tilde u_k^-}_{p}^p \leqslant \norm{\nabla u_{k+1}}_{p}^{p-1} \norm{\nabla \tilde u_k^-}_{p}.
\end{equation}
At the same time, since $R^-[\tilde u_{k}] = R[u_k]$ by Theorem~\ref{th:existence-of-alphaK}, the definition \eqref{hats} and the boundedness result \eqref{boundedness for tildas} give
\begin{equation}
\norm{\nabla \hat u_{k}^-}_{p}^{p-1} 
= 
\frac{\beta_{k}}{\norm{u_{k + 1}}_{p}^{p-1}} \norm{\nabla \tilde u_{k}^-}_{p}^{p-1}
\leqslant 
R^{\frac{p-1}p}[u_1] \, \frac{\beta_{k} \norm{\tilde u_{k}^-}_{p}^{p-1}}{\norm{u_{k + 1}}_{p}^{p-1}}.
\end{equation}
Applying \eqref{Right sign L-norm estimate}, we deduce that
$$
\norm{\nabla \hat u_{k}^-}_{p}^{p-1}
\leqslant R^{\frac{p-1}p}[u_1] \, \frac{\norm{\nabla u_{k + 1}}_{p}^{p-1}}{\norm{u_{k + 1}}_{p}^{p-1}} \cdot \frac{\norm{\nabla \tilde u_{k}^-}_{p}}{\norm{\tilde u_{k}^-}_{p}} 
=
\Big(R[u_1]\cdot R[u_{k + 1}]\Big)^{\frac{p-1}{p}} \, R^{\frac{1}{p}}[\tilde u_{k}^-] 
\leqslant 
R^{\frac{2p-1}{p}}[u_1],
$$
which completes the proof. 
The case of positive parts can be analyzed in the same way.
\end{proof}

We can now apply Corollary~\ref{strong partial limits of solutions} to derive some important convergence properties of the sequence $\{\tilde u_k\}$.

\begin{proposition}\label{lem:conv:sep}
$R^* = \inf R[u_k]$ is a higher eigenvalue.
Moreover, any index sequence $\{k_n\}$ can be reduced in such a way that the sequence $\{\tilde u_{k_n}\}$ converges strongly in $\Wo$ to a higher eigenfunction associated with the eigenvalue $R^*$. 
In particular,
\begin{equation}\label{wapaqat} 
\liminf_{k \to \infty} \norm{\tilde u_{k}^{\pm}}_{p} > 0.  
\end{equation}
\end{proposition}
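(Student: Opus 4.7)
My plan is to view the renormalized iteration \eqref{renormed} as the $p$-Poisson problem $-\Delta_p \tilde u_{k+1} = |\hat u_k|^{p-2}\hat u_k$ and to extract a strongly convergent subsequence of $\{\tilde u_{k_n}\}$ by exploiting the $W_0^{1,p}(\Omega)$-boundedness of $\{\hat u_k^\pm\}$ from Lemma~\ref{boundedness for hats}. Given an arbitrary index sequence $\{k_n\}$ (without loss of generality $k_n \geqslant 1$), I would first reduce it so that, by reflexivity and Rellich--Kondrachov, $\hat u_{k_n-1}^\pm \rightharpoonup \hat u^\pm$ weakly in $W_0^{1,p}(\Omega)$, strongly in $L^p(\Omega)$, and pointwise a.e.\ in $\Omega$. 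Since $\hat u_k^+$ and $\hat u_k^-$ have disjoint supports at every step, the non-negative limits $\hat u^\pm$ are themselves disjointly supported, so they coincide with the positive and negative parts of $\hat u := \hat u^+ - \hat u^-$. The right-hand side of \eqref{renormed} with $k = k_n - 1$ therefore converges strongly in $L^{p'}(\Omega)$ to $|\hat u|^{p-2}\hat u$, and Corollary~\ref{strong partial limits of solutions} delivers the strong $W_0^{1,p}(\Omega)$-convergence $\tilde u_{k_n} \to \tilde u^*$, where $\tilde u^*$ solves $-\Delta_p \tilde u^* = |\hat u|^{p-2}\hat u$. Strong convergence also gives $\|\tilde u^*\|_p = 1$ (so in particular $\hat u \neq 0$) and $R[\tilde u^*] = \lim R[u_{k_n}] = R^*$.

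The next step is to identify $\tilde u^*$ as an eigenfunction by applying Proposition~\ref{inequalities system} to the pair $(v,f) = (\tilde u^*, \hat u)$. A short calculation based on $R^+[\tilde u_k] = R^-[\tilde u_k] = R[u_k]$ from \eqref{eq:RRR} gives the identity $R[\hat u_k] = R[u_k]$ for every $k$, so strong $L^p$-convergence of $\hat u_{k_n-1}$ combined with weak lower semicontinuity of the Dirichlet energy yields
\begin{equation*}
R[\hat u] \leqslant \liminf_{n \to \infty} R[\hat u_{k_n-1}] = \lim_{n \to \infty} R[u_{k_n-1}] = R^*.
\end{equation*}
On the other hand, Proposition~\ref{inequalities system} supplies the reverse bound $R[\tilde u^*] \leqslant R[\hat u]$, and since $R[\tilde u^*] = R^*$ we obtain equality throughout. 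The equality case of that proposition then forces $\hat u$, and consequently $\tilde u^* = R^{*,\,1/(1-p)}\hat u$, to be an eigenfunction associated with the eigenvalue $R^*$. Invoking \eqref{prop:monotonicity} to deduce $R^* \geqslant \lambda_2(\Omega,p) > \lambda_1(\Omega,p)$ shows that $R^*$ is a higher eigenvalue, and since the first eigenfunction is (up to a constant) the only sign-constant eigenfunction of the $p$-Laplacian, $\tilde u^*$ must change sign.

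Finally, \eqref{wapaqat} would follow by contradiction: should $\|\tilde u_{k_n}^+\|_p \to 0$ along some subsequence, the construction above would produce a strong $W_0^{1,p}$-limit $\tilde u^*$ with $(\tilde u^*)^+ = 0$, contradicting the sign-changing property just established; the case $\|\tilde u_{k_n}^-\|_p \to 0$ is symmetric. The principal obstacle throughout is that boundedness of $\{\hat u_k^\pm\}$ in $W_0^{1,p}(\Omega)$ provides only \emph{weak} convergence, so the Rayleigh quotient of the limit $\hat u$ cannot be evaluated by a direct passage to the limit in its definition. The decisive idea is that Proposition~\ref{inequalities system} both pinches $R[\hat u]$ between $R^*$ from above (by weak lower semicontinuity) and $R[\tilde u^*] = R^*$ from below, and then upgrades the resulting equality $R[\hat u] = R[\tilde u^*]$ via its rigidity statement into the identification of both limits as eigenfunctions for $R^*$.
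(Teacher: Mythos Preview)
Your proposal is correct and follows essentially the same approach as the paper's own proof: extract a weakly convergent subsequence of $\{\hat u_{k_n-1}\}$ via Lemma~\ref{boundedness for hats}, pass to a strongly convergent subsequence of $\{\tilde u_{k_n}\}$ through Corollary~\ref{strong partial limits of solutions}, use the identity $R[\hat u_k]=R[u_k]$ together with weak lower semicontinuity to sandwich $R[\hat u]=R[\tilde u^*]=R^*$, and invoke the rigidity case of Proposition~\ref{inequalities system}. The only cosmetic difference is that you split $\hat u_{k_n-1}$ into its positive and negative parts and argue their limits are disjointly supported, whereas the paper treats $\hat u_{k_n-1}$ as a whole; both routes are valid.
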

\begin{proof}
Let $\ungulata{k_n}$ be an increasing sequence in $\mathbb{N}$.
Thanks to Lemma~\ref{boundedness for hats} and Corollary~\ref{strong partial limits of solutions} applied to the problem \eqref{renormed}, we can reduce $\ungulata{k_n}$ in such a way that 
\begin{gather}\label{tilda u weakly} 
\hat u_{k_n - 1} \to \varphi \text{ weakly in } \Wo \text{ and strongly in } L^p(\Omega),\\
\tilde u_{k_n} \to u \text{ strongly in } \Wo, 
\end{gather}
for some $\varphi, u \in \Wo$, where $u$ solves $-\Delta_p u = |\varphi|^{p-2}\varphi$ in $\Omega$. 
Since $\norm{\tilde u_{k_n}}_{p} \equiv 1$, the strong convergence in $\Wo$ yields $\norm{u}_{p} = 1$, so that $u = \tilde u$ and $\varphi \ne 0$.

By Theorem~\ref{th:existence-of-alphaK} and  Proposition~\ref{inequalities system}, we have $R^* = R[u] \leqslant R[\varphi]$.
Note that the homogeneity of $R[\cdot]$ implies
$$ 
R^{\pm} [\hat u_k] = \frac{\norm{\nabla \hat u_k^\pm}_{p}^{p}}{\norm{\hat u_k^\pm}_{p}^p} = \frac{\norm{\nabla \tilde u_k^\pm}_{p}^p}{\norm{\tilde u_k^\pm}_{p}^p} = R[u_k], 
$$
and hence we conclude 
from \eqref{fracrat} and Theorem~\ref{th:existence-of-alphaK} that $R[\hat u_k] = R[u_k] \to R^*$.
On the other hand, the convergence properties of $\ungulata{\hat u_{k_n - 1}}$ and the lower semi-continuity of the norm give
\begin{equation}\label{^_^}
R^* =  
\Lim{n \to \infty} \frac{\norm{\nabla \hat u_{k_n - 1}}_{p}^{p}}{\norm{\hat u_{k_n -1}}_{p}^p}
= \norm{\varphi}_p^{-p}\cdot\Lim{n \to \infty} \norm{\nabla \hat u_{k_n - 1}}_p^p
\geqslant \frac{\norm{\nabla \varphi}_{p}^p}{\norm{\varphi}_{p}^p} = R[\varphi].
\end{equation}
Thus, we derive $R^* = R[u] = R[\varphi]$, 
and Proposition~\ref{inequalities system} guarantees that $\tilde u$ is an eigenfunction associated with the eigenvalue $R[u] = R^* \geqslant \lambda_2 (\Omega,p)$. It is therefore a \textit{higher} eigenvalue, and hence $u$ is sign-changing.  

Suppose now that $\liminf\limits_{k \to \infty} \norm{\tilde u_{k}^{+}}_{p} = 0$. 
Then there exists a subsequence $\{\tilde u_{k_n}^{+}\}$ such that $\tilde u_{k_n}^{+} \to 0$ strongly in $L^p(\Omega)$. 
By the already obtained result, we can further assume that $\{\tilde u_{k_n}\}$ converges strongly in $\Wo$ to some higher eigenfunction $\phi \in \Wo \setminus \{0\}$. 
Since $\phi^+ \neq 0$, we get a contradiction.
The same argument applies to $\liminf\limits_{k \to \infty} \norm{\tilde u_{k}^{-}}_{p}$, establishing \eqref{wapaqat}. 
\end{proof}

Having Proposition~\ref{lem:conv:sep} in hand, we can show that the unscaled sequence $\{u_k\}$ is  separated from zero, which will play a crucial role in our proof of Theorem~\ref{thm:main} in the following section.

\begin{lemma}\label{lem:separatedness}
There exist constants $\eta_W, \eta_L > 0$ such that $\norm{\nabla u_k}_{p}  \geqslant \eta_W$, $\norm{u_k}_{p}  \geqslant \eta_L$.
\end{lemma}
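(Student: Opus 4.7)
Since the Rayleigh quotients satisfy $\lambda_2(\Omega,p) \leqslant R[u_k] \leqslant R[u_1]$ for every $k$ (by Proposition~\ref{lem:conv:sep} and Theorem~\ref{th:existence-of-alphaK}), the identity $\norm{\nabla u_k}_{p} = R^{1/p}[u_k] \, \norm{u_k}_{p}$ makes the two desired lower bounds equivalent; hence it suffices to produce $\eta_W$, and then set $\eta_L := \eta_W / R^{1/p}[u_1]$. My plan is to derive such an $\eta_W$ by carefully choosing a test function in the weak form of \eqref{phi2}.

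Specifically, I would test \eqref{phi2} with $\tilde u_k$ and exploit the fact that the supports of $\tilde u_k^+$ and $\tilde u_k^-$ are disjoint up to a null set, which gives
\begin{equation}
\I{\Omega}{} \abs{\nabla u_{k+1}}^{p-2} \langle \nabla u_{k+1}, \nabla \tilde u_k \rangle \,dx
=
\alpha_k \norm{\tilde u_k^+}_{p}^p + \beta_k \norm{\tilde u_k^-}_{p}^p.
\end{equation}
Estimating the left-hand side by the H\"older inequality yields $\norm{\nabla u_{k+1}}_{p}^{p-1} \norm{\nabla \tilde u_k}_{p} \leqslant R^{1/p}[u_1] \, \norm{\nabla u_{k+1}}_{p}^{p-1}$, so the problem reduces to showing that the right-hand side of the above identity is bounded away from zero.

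For the right-hand side I would use the fixed-point structure of $\beta$: since $\beta:[0;1]\to[0;1]$ is continuous and strictly decreasing with $\beta(0)=1$ and $\beta(1)=0$, it admits a unique fixed point $\alpha^*\in(0;1)$, and an elementary monotonicity argument yields $\max(\alpha,\beta(\alpha))\geqslant \alpha^*$ for every $\alpha\in[0;1]$. Applied to $\alpha=\alpha_k$, this gives
\begin{equation}
\alpha_k \norm{\tilde u_k^+}_{p}^p + \beta_k \norm{\tilde u_k^-}_{p}^p
\geqslant
\alpha^*\min\bigl(\norm{\tilde u_k^+}_{p}^p,\norm{\tilde u_k^-}_{p}^p\bigr),
\end{equation}
and by Proposition~\ref{lem:conv:sep} the right-hand side is bounded below by some $\eta>0$ for all $k$ exceeding some index $N$. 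Combining with the Hölder bound then yields $\norm{\nabla u_{k+1}}_{p}^{p-1}\geqslant \alpha^*\eta /R^{1/p}[u_1]$ for $k\geqslant N$; for the remaining finitely many indices, $\norm{\nabla u_k}_{p}>0$ holds trivially since each $u_k$ is a well-defined sign-changing element of $\Wo$, and taking the minimum gives $\eta_W$.

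The main obstacle in this strategy is choosing a test function that simultaneously engages both $\alpha_k\norm{\tilde u_k^+}_{p}^p$ and $\beta_k\norm{\tilde u_k^-}_{p}^p$, so that the fixed-point lower bound on $\max(\alpha_k,\beta_k)$ can be coupled with the non-degeneracy of $\tilde u_k^\pm$ coming from Proposition~\ref{lem:conv:sep}; the more natural choices of testing with $u_{k+1}^\pm$ only yield upper bounds of the type \eqref{Left sign W-norm estimate}, not the lower bounds we need.
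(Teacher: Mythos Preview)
Your proof is correct and follows essentially the same strategy as the paper: both test the weak form of \eqref{phi2} to bound $\norm{\nabla u_{k+1}}_{p}^{p-1}$ from below by a quantity involving $\alpha_k$, $\beta_k$, and $\norm{\tilde u_k^\pm}_{p}$, then invoke \eqref{wapaqat} from Proposition~\ref{lem:conv:sep} together with the monotonicity of $\beta$ to rule out degeneration. The only differences are cosmetic: the paper tests with $f^\pm$ separately (via \eqref{Right sign L-norm estimate G}--\eqref{Right sign L-norm estimate G-2}) and argues by contradiction that $\max\{\alpha_{k_n},\beta_{k_n}\}\to 0$ is impossible, whereas you test once with $\tilde u_k$ and argue directly using the explicit fixed-point bound $\max(\alpha,\beta(\alpha))\geqslant\alpha^*$.
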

\begin{proof}
Suppose that there exists a subsequence $\ungulata{u_{k_n}}$ of $\{u_k\}$ whose shift $\ungulata{u_{k_n+1}}$ satisfies
\begin{equation}\label{eq:subseq:inf>0}
\Lim{n \to \infty} \norm{\nabla u_{k_n + 1}}_{p} = 0.
\end{equation}
Consider the relation between $\ungulata{u_{k_n + 1}}$ and $\ungulata{\tilde u_{k_n}}$ given by \eqref{phi2}:
\begin{equation}\label{shift by +1}
\I{\Omega}{} \abs{\nabla u_{k_n + 1}}^{p-2} \langle \nabla u_{k_n + 1}, \nabla \testf \rangle \,dx = \I{\Omega}{} \quadr{\alpha_{k_n} (\tilde u_{k_n}^+)^{p-1} - \beta_{k_n} (\tilde u_{k_n}^+)^{p-1}} \testf \,dx.
\end{equation}
Since $\|\tilde u_{k_n}^\pm\|_p \leqslant \norm{\tilde u_{k_n}}_p = 1$ and $\norm{\nabla \tilde u_k^\pm}_{p}^p \leqslant \norm{\nabla \tilde u_k}_{p}^p \leqslant R[u_1] $ (see \eqref{boundedness for tildas}),
the inequalities \eqref{Right sign L-norm estimate G} and \eqref{Right sign L-norm estimate G-2} with the choice
\begin{equation}
v = u_{k_n+1},
\qquad 
f =  \alpha_{k_n}^{1/(p-1)} \tilde u_{k_n}^+ - \beta_{k_n}^{1/(p-1)} \tilde u_{k_n}^-,
\end{equation} give
\begin{equation}\label{eq:conv:altern1}
\max\Big\{\alpha_{k_n} \|\tilde u_{k_n}^+\|_{p}^p, \beta_{k_n} \|\tilde u_{k_n}^-\|_{p}^p\Big\}
\leqslant R^{\frac{1}{p}}[u_1] \cdot {\norm{\nabla u_{k_n + 1}}_{p}^{p-1}} \to 0 \text{ as } n \to \infty.
\end{equation}
In view of \eqref{wapaqat}, we have $\max\{\alpha_{k_n}, \beta_{k_n}\} \to 0$, which contradicts the decreasing property of $\beta(\cdot)$.
Consequently, there exists $\eta_W > 0$ such that $\norm{\nabla u_k}_{p}  \geqslant \eta_W$. 
In view of \eqref{boundedness for tildas}, we get 
$$
\eta_W \leqslant \norm{\nabla u_k}_{p} = R^{\frac{1}{p}}[u_k] \cdot \norm{u_k}_{p} 
\leqslant 
R^{\frac{1}{p}}[u_1] \cdot \norm{u_k}_{p}, 
$$
which implies the existence of $\eta_L > 0$ such that $\norm{u_k}_{p} \geqslant \eta_L$. 
\end{proof}

\subsection{Proof of Theorem~\ref{thm:main}}\label{sec:conv:main}

We start with the following two intermediate results. 
\begin{proposition}\label{lem:if any subsequence is separated from zero}
The sequence
$\ungulata{\alpha_k}$ converges to the unique fixed point $\alpha^*$ of the mapping $\alpha \mapsto \beta(\alpha)$, and
\begin{equation}\label{norms tend globally} 
\Lim{k \to \infty} \norm{u_k}_{p} = \prnth{\frac{\alpha^*}{R^*}}^{\frac{1}{p-1}}, 
\quad 
\Lim{k \to \infty} \norm{\nabla u_k}_{p} = \prnth{\frac{\alpha^*}{\sqrt[p]{R^*}}}^{\frac{1}{p-1}}. 
\end{equation}
Moreover, an arbitrary increasing sequence of natural numbers $\ungulata{k_n}$ can be reduced in such a way that $\ungulata{u_{k_n}}$ converges strongly in $\Wo$ to some higher eigenfunction $u$ corresponding to the eigenvalue $R^*$. 
Furthermore, for any $i \in \mathbb{N}$, the shifted subsequence $\ungulata{u_{k_n + i}}$ also converges to $u$ strongly in $\Wo$.
\end{proposition}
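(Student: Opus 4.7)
The plan is to extract strongly convergent sub-subsequences and analyze the limits. Given an arbitrary increasing $\{k_n\}\subset\mathbb{N}$, we know that $\{\alpha_{k_n}\}, \{\beta_{k_n}\}\subset(0;1)$, that $\{\|u_{k_n}\|_p\}$ is bounded and bounded away from zero by Lemmas~\ref{lem:bounds} and~\ref{lem:separatedness}, and by Proposition~\ref{lem:conv:sep} the sequence $\{\tilde u_{k_n}\}$ is pre-compact in $W_0^{1,p}(\Omega)$ with limits being higher eigenfunctions corresponding to $R^*$. Thinning out, we may assume simultaneously $\alpha_{k_n}\to\alpha_0$, $\beta_{k_n}\to\beta_0=\beta(\alpha_0)$ (by continuity of $\beta$), $\|u_{k_n}\|_p\to L_0>0$, and $\tilde u_{k_n}\to\tilde u$ strongly in $W_0^{1,p}(\Omega)$ with $\tilde u$ a higher eigenfunction at $R^*$, so $u_{k_n}\to u:=L_0\tilde u$ strongly in $W_0^{1,p}(\Omega)$.

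Next I would pass to the shifted iterates. The right-hand side of \eqref{phi2} for $u_{k_n+1}$ converges strongly in $L^{p'}(\Omega)$ to $\alpha_0(\tilde u^+)^{p-1}-\beta_0(\tilde u^-)^{p-1}=|f_1|^{p-2}f_1$ with $f_1:=\alpha_0^{1/(p-1)}\tilde u^+-\beta_0^{1/(p-1)}\tilde u^-$, so Corollary~\ref{right sides convergence yields the solutions convergence} gives $u_{k_n+1}\to w_1$ strongly in $W_0^{1,p}(\Omega)$, where $-\Delta_p w_1=|f_1|^{p-2}f_1$. Since $\tilde u^\pm$ are positive first Dirichlet eigenfunctions on $\Omega_{\tilde u}^\pm$ with eigenvalue $R^*$ (inherited from the eigenfunction equation for $\tilde u$), a direct calculation on each nodal domain gives $-\Delta_p f_1=R^*|f_1|^{p-2}f_1$ on $\Omega$, i.e.\ $f_1$ itself is a sign-changing eigenfunction at $R^*$. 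The equality case of Proposition~\ref{inequalities system} then forces $w_1=(R^*)^{1/(1-p)}f_1$; a Cantor diagonal extraction applied successively yields, for every $i\ge 0$, iterated limits $\alpha_{k_n+i}\to\alpha_i$, $u_{k_n+i}\to w_i$ strongly in $W_0^{1,p}$, and representations $w_i=(R^*)^{1/(1-p)}f_i$ with corresponding $f_i$.

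The decisive step — and the one I expect to be the main obstacle — is to prove $\alpha_0=\beta_0$, which, since $\beta$ is strictly decreasing, forces $\alpha_0=\beta(\alpha_0)=\alpha^*$, the unique fixed point of $\beta$. The key tool is the ratio $r_k:=\|\tilde u_k^+\|_p/\|\tilde u_k^-\|_p$: Proposition~\ref{lem:conv:sep} keeps $\{r_k\}$ in a fixed compact subinterval of $(0,\infty)$, while the explicit descriptions above yield, at each iterated limit, the multiplicative relation $r_{i+1}/r_i=(\alpha_i/\beta_i)^{1/(p-1)}$, so that the partial products $\prod_{i=0}^{m-1}(\alpha_i/\beta_i)^{1/(p-1)}=r_m/r_0$ stay uniformly bounded in $m$. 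Combining this bound with $\sgn(\log(\alpha_i/\beta_i))=\sgn(\alpha_i-\alpha^*)$ (from monotonicity of $\beta$) and the fact that the same analysis applies to every shift of the sequence is expected to force $\alpha_i=\alpha^*$ for all $i$; the subtlety lies in ruling out oscillatory behavior, since bounded partial sums alone do not force vanishing of the summands, and one therefore needs to exploit the consistency of the same relation across all shifts together with the uniqueness of the fixed point of $\beta$.

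Once $\alpha_0=\alpha^*$ is established, $f_1=(\alpha^*)^{1/(p-1)}\tilde u$ gives $w_1=(\alpha^*/R^*)^{1/(p-1)}\tilde u$, so $\|u_{k_n+1}\|_p\to(\alpha^*/R^*)^{1/(p-1)}$. Since every subsequence of $\{u_k\}$ is a shift by $1$ of another subsequence, this pins $\lim\|u_k\|_p=(\alpha^*/R^*)^{1/(p-1)}$, yielding the first identity in \eqref{norms tend globally}; the second follows from $\|\nabla u_k\|_p=R^{1/p}[u_k]\|u_k\|_p$ and $R[u_k]\to R^*$ (Proposition~\ref{lem:conv:sep}). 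Finally, $L_0=(\alpha^*/R^*)^{1/(p-1)}$ gives $w_1=L_0\tilde u=u$, so the shifted subsequence converges strongly to the same $u$, and the same argument applied to $w_i$ closes the induction over $i\in\mathbb{N}$.
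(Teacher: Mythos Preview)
Your argument contains a genuine gap at the step you yourself flag as decisive.

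The claim that $f_1=\alpha_0^{1/(p-1)}\tilde u^+-\beta_0^{1/(p-1)}\tilde u^-$ is an eigenfunction ``by a direct calculation on each nodal domain'' is incorrect reasoning. The pointwise identity $-\Delta_p f_1=R^*|f_1|^{p-2}f_1$ on $\Omega_{\tilde u}^+\cup\Omega_{\tilde u}^-$ does \emph{not} imply the weak formulation on all of~$\Omega$: if $\alpha_0\ne\beta_0$, the flux $|\nabla f_1|^{p-2}\nabla f_1$ has a jump across the nodal set of $\tilde u$ (think of $p=2$, $\Omega=(0,1)$, $\tilde u=\sin 2\pi x$), and this produces a surface term when you integrate by parts against a test function. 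In fact, ``$f_1$ is a weak eigenfunction'' is \emph{equivalent} to $\alpha_0=\beta_0$, so you are assuming precisely what must be shown. Your subsequent ratio argument via $r_k=\|\tilde u_k^+\|_p/\|\tilde u_k^-\|_p$ is then left incomplete; you acknowledge that bounded partial products of $(\alpha_i/\beta_i)^{1/(p-1)}$ do not by themselves force $\alpha_i=\beta_i$, and you do not supply the missing mechanism.

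The paper closes this gap differently. It first applies Proposition~\ref{lem:conv:sep} to the \emph{shifted} subsequence $\{u_{k_n+1}\}$ (via $\{\tilde u_{k_n+1}\}$) to conclude that the limit $w_1$ is itself a weak eigenfunction at $R^*$; this uses compactness, not a nodal-domain calculation. Equating the two right-hand sides $-\Delta_p w_1=\alpha_0(\tilde u^+)^{p-1}-\beta_0(\tilde u^-)^{p-1}=R^*|w_1|^{p-2}w_1$ then gives $w_1^+=(\alpha_0/R^*)^{1/(p-1)}\tilde u^+$ and $w_1^-=(\beta_0/R^*)^{1/(p-1)}\tilde u^-$. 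Now one has \emph{two} genuine weak eigenfunctions $\tilde u$ and $w_1$ at the same eigenvalue whose positive and negative parts are proportional with possibly different factors; the paper invokes \cite[Lemma~2.5 and Remark~2.2]{Drabek on Courant} to conclude that the factors must agree, i.e.\ $\alpha_0=\beta(\alpha_0)=\alpha^*$. Once this is in hand, the rest of your outline (convergence of $\alpha_k$, the norm limits \eqref{norms tend globally}, $w_1=u$, and the shift property by induction) goes through essentially as you wrote it.
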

\begin{proof}
Let $\ungulata{k_n}$ be an arbitrary index sequence.
First, note that Lemma~\ref{lem:bounds} allows to reduce $\{k_n\}$ in such a way that $\{u_{k_n}\}$ converges strongly in $L^p(\Omega)$ to some $u_{(0)} \in \Wo$,
and by Lemma~\ref{lem:separatedness} we have $u_{(0)} \ne 0$.
Then, applying Proposition~\ref{lem:conv:sep}, we can thin out $\{k_n\}$ in such a way that the sequence $\{\tilde u_{k_n}\}$ converges to $\tilde u_{(0)} = \|u_{(0)}\|_p^{-1} u_{(0)}$ strongly in $\Wo$,
and the same proposition guarantees that $u_{(0)}$ is an eigenfunction associated with the higher eigenvalue $R^*$.

Recall that $\ungulata{\alpha_{k_n}}, \ungulata{\beta_{k_n}} \subset (0;1)$ and the function $\alpha \mapsto \beta(\alpha)$ is continuous.
Hence, there is a thin-out of $\ungulata{k_n}$ assuring the existence of $\alpha_{(0)} \in [0;1]$ such that 
\begin{equation}
\label{tilde u_{k_n} to tilde u_{(0)}0} 
\Lim{n \to \infty} \alpha_{k_n} = \alpha_{(0)}, \quad \Lim{n \to \infty} \beta_{k_n} = \beta(\alpha_{(0)}).
\end{equation}
Applying Corollary~\ref{strong partial limits of solutions} with 
$f_{k_n} = \alpha_{k_n}^{1/(p-1)}\tilde u_{k_n}^+ - \beta_{k_n}^{1/(p-1)} \tilde u_{k_n}^-$
and noticing that \eqref{tilde u_{k_n} to tilde u_{(0)}0} and the strong convergence of $\{\tilde u_{k_n}\}$ in $\Wo$ yield
$$
f_{k_n} \to 
\alpha_{(0)}^{\frac{1}{p-1}}\tilde u_{(0)}^+ - \beta^{\frac{1}{p-1}}(\alpha_{(0)}) \cdot \tilde u_{(0)}^- 
~\text{ strongly in }~ \Wo, 
$$
we can further thin out $\ungulata{k_n}$ to get $u_{k_n + 1} \to u_{(1)}$ strongly in $W_0^{1,p}(\Omega)$, 
where $u_{(1)}$ solves 
\begin{equation}\label{phi-u1}
-\Delta_p u_{(1)} = \alpha_{(0)} \prnth{\tilde u_{(0)}^+}^{p-1} - \beta(\alpha_{(0)}) \cdot \prnth{\tilde u_{(0)}^-}^{p-1} \text{ in } \Omega.
\end{equation} 
Again, by Lemma~\ref{lem:separatedness}, we have $u_{(1)} \neq 0$, and hence 
the convergence of $\ungulata{u_{k_n + 1}}$ implies that $\tilde u_{k_n + 1} \to \tilde u_{(1)}$ strongly in $\Wo$.
Then it follows from Proposition~\ref{lem:conv:sep} that $ u_{(1)}$ is a higher eigenfunction associated with the eigenvalue $R^*$:
\begin{equation}\label{u1 is EF} 
-\Delta_p u_{(1)} = R^* \big[(u_{(1)}^+)^{p-1} - (u_{(1)}^-)^{p-1} \big] \text{ in } \Omega.
\end{equation}
Combining \eqref{phi-u1} with \eqref{u1 is EF}, we see that
\begin{equation}\label{ux ux ja petux}
\alpha_{(0)}^{\frac{1}{p-1}} \, \tilde u_{(0)}^+ = (R^*)^{\frac{1}{p-1}} u_{(1)}^+, \quad \beta^{\frac{1}{p-1}} \big(\alpha_{(0)}\big)\cdot \tilde u_{(0)}^- = (R^*)^{\frac{1}{p-1}} u_{(1)}^-. 
\end{equation}
Since in \eqref{ux ux ja petux} both $u_{(0)}$ and $u_{(1)}$ are eigenfunctions,
we deduce from \cite[Lemma~2.5 and Remark~2.2]{Drabek on Courant} that $\alpha_{(0)} = \beta(\alpha_{(0)})$,
which means that $\alpha_{(0)}$ is a fixed point of the mapping $\alpha \mapsto \beta(\alpha)$.
Such a fixed point is unique due to the strict decreasing property of $\beta(\cdot)$, and we denote it as $\alpha^*$.
Since, as we have proved, an arbitrarily chosen sequence of indices $\{k_n\}$ can be thinned out so that $\alpha_{k_n} \to \alpha^*$, we conclude that the whole sequence $\{\alpha_k\} \subset (0;1)$ converges to $\alpha^*$; by the continuity of $\beta(\cdot)$, we also have $\beta_k \to \beta(\alpha^*) = \alpha^*$.

We can now rewrite \eqref{ux ux ja petux} as
\begin{equation}\label{wse chastushki my propeli} 
u_{(1)} = \prnth{\frac{\alpha^*}{R^*}}^{\frac{1}{p-1}} \tilde u_{(0)},
\end{equation}
implying
\begin{equation}
	\norm{u_{(1)}}_p = \prnth{\frac{\alpha^*}{R^*}}^{\frac{1}{p-1}}. 
\end{equation}
Since neither $\alpha^*$ nor $R^*$ depends on the choice of $\ungulata{k_n}$, we can apply the above reasoning to get $\norm{u_{k_l + 1}}_{p}^{p-1} \to {\alpha^*}/{R^*}$
for any index sequence $\ungulata{k_l}$ such that  $\{\norm{u_{k_l + 1}}_{p}\}$ converges. 
This fact in combination with Lemma~\ref{lem:bounds} implies that 
\begin{equation}\label{Lp norm for k_i} 
\Lim{k \to \infty} \norm{u_{k}}_{p}^{p-1} = \frac{\alpha^*}{R^*}. 
\end{equation}
Since, in view of \eqref{prop:monotonicity}, we have
$$ 
R^* = \Lim{k \to \infty} \frac{\norm{\nabla u_k}_{p}^p}{\norm{u_k}_{p}^p}, 
$$
we obtain both the convergences in \eqref{norms tend globally}.

From \eqref{Lp norm for k_i}, it follows that $\norm{u_{(0)}}_p = (\alpha^*/R^*)^{1/(p-1)}$, so that \eqref{wse chastushki my propeli} reads as
$$ u_{(1)} = \prnth{\frac{\alpha^*}{R^*}}^{\frac{1}{p-1}} \norm{u_{(0)}}_p^{-1} u_{(0)} = u_{(0)}, 
$$
i.e., the strong limits of $\{u_{k_n}\}$ and $\{u_{k_n + 1}\}$ coincide. 
Note, however, that we employed some additional thin-outs to assure that $u_{k_n + 1} \to u_{(1)} = u_{(0)}$  strongly in $\Wo$. 
It remains to show that once $\ungulata{u_{k_n}}$ converges, no further thin-outs are necessary to guarantee the convergence of $\ungulata{u_{k_n + 1}}$.

Suppose that the subsequence $\ungulata{u_{k_n}}$ converges to $u_{(0)}$ strongly in $\Wo$, while $\ungulata{u_{k_n + 1}}$ does not.
Then there exist $\varepsilon > 0$ and a thin-out of $\ungulata{k_n}$ such that
$$ 
\norm{\nabla (u_{k_n + 1} - u_{(0)})}_{p} > \varepsilon \quad \text{for any } n \in \mathbb{N}.
$$
However, we already established that $\ungulata{k_n}$ can always be further reduced to assure the strong convergence of $\ungulata{u_{k_n + 1}}$, and that the corresponding limit will always coincide with $u_{(0)}$, which contradicts the initial claim. 

Continuing by induction, we conclude that, for any $i \in \mathbb{N}$, the shift 
$\ungulata{u_{k_n + i}}$ must converge to $u_{(0)}$ strongly in $\Wo$ without any further thin-outs.
\end{proof}

\begin{lemma}\label{lem:U}
Let ${\cal U}$ be the collection of all strong limit points of the sequence $\ungulata{u_k}$ in $\Wo$. 
Suppose ${\cal U} \not \subset \overline{B(u, \varepsilon)}$ for some $u \in {\cal U}$ and $\varepsilon > 0$, where $B(u, \varepsilon)$ is the $\varepsilon$-neighborhood of $u$ in $\Wo$.
Then there exists $v \in {\cal U}$ such that $\norm{\nabla (u - v)}_{p} = \varepsilon$.
\end{lemma}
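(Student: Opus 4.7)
The plan is to reduce the claim to a discrete intermediate-value argument for the scalar sequence $a_k := \|\nabla(u_k - u)\|_p$.

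First I would establish the crucial preliminary fact that consecutive iterates become arbitrarily close, i.e., $\|\nabla(u_{k+1} - u_k)\|_p \to 0$ as $k \to \infty$. Suppose otherwise: then there exists $\delta > 0$ and a subsequence $\{k_n\}$ with $\|\nabla(u_{k_n+1} - u_{k_n})\|_p \geqslant \delta$. By Lemma~\ref{lem:bounds} and Proposition~\ref{lem:if any subsequence is separated from zero}, $\{k_n\}$ can be reduced so that $u_{k_n}$ converges strongly in $W_0^{1,p}(\Omega)$ to some $u^\star \in {\cal U}$. The shift-invariance assertion of that proposition then forces $u_{k_n+1} \to u^\star$ strongly as well, contradicting the lower bound on $\|\nabla(u_{k_n+1} - u_{k_n})\|_p$.

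Next I would use the triangle inequality to obtain $|a_{k+1} - a_k| \leqslant \|\nabla(u_{k+1} - u_k)\|_p \to 0$. Since $u \in {\cal U}$, there is a subsequence along which $a_k \to 0$, so $a_k < \varepsilon$ for infinitely many $k$. On the other hand, by the failure of inclusion ${\cal U} \not\subset \overline{B(u,\varepsilon)}$, there exists $w \in {\cal U}$ with $\|\nabla(u-w)\|_p > \varepsilon$, so some subsequence of $a_k$ tends to $\|\nabla(u-w)\|_p > \varepsilon$ and hence $a_k > \varepsilon$ infinitely often. Combining these two facts with $|a_{k+1}-a_k|\to 0$, a straightforward discrete intermediate-value argument yields a subsequence $\{k_n\}$ along which $a_{k_n} \to \varepsilon$: pick, for each $n$, an index $k_n$ lying between a ``low'' index (where $a < \varepsilon - 1/n$) and a ``high'' index (where $a > \varepsilon + 1/n$) at which the crossing occurs, and use the vanishing of consecutive differences to force $a_{k_n} \to \varepsilon$.

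Finally, applying Proposition~\ref{lem:if any subsequence is separated from zero} to the subsequence $\{k_n\}$, I would thin it out further so that $u_{k_n} \to v$ strongly in $W_0^{1,p}(\Omega)$ for some $v \in {\cal U}$. Passing to the limit in $a_{k_n} = \|\nabla(u_{k_n} - u)\|_p$ then gives $\|\nabla(v-u)\|_p = \varepsilon$, as required.

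The only real obstacle is the first step, i.e., verifying that $\|\nabla(u_{k+1} - u_k)\|_p \to 0$; once this is granted, everything else is a clean compactness/continuity argument. Fortunately this step is essentially a direct corollary of the shift-invariance of strong limits established in Proposition~\ref{lem:if any subsequence is separated from zero}, which is exactly the ingredient that makes the discrete scalar sequence $\{a_k\}$ behave like a continuous trajectory for the purposes of the intermediate-value argument.
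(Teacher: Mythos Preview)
Your argument is correct, and it uses the same key ingredient as the paper---the shift-invariance of strong limits from Proposition~\ref{lem:if any subsequence is separated from zero}---but packages it differently. The paper proceeds more directly: it singles out the ``crossing'' indices $I = \{k : a_k < \varepsilon,\ a_{k+1} \geqslant \varepsilon\}$ (which is infinite for the same reasons you give), thins $\{u_{k_n}\}_{k_n \in I}$ to converge to some $v$, and then applies shift-invariance \emph{once} at the end to conclude that $\{u_{k_n+1}\}$ also converges to $v$; since the first sequence lies in $B(u,\varepsilon)$ and the second in its complement, $v$ is forced onto $\partial B(u,\varepsilon)$. You instead apply shift-invariance \emph{first} to extract the auxiliary fact $\|\nabla(u_{k+1}-u_k)\|_p \to 0$, and then run a purely scalar intermediate-value argument on $a_k$ before returning to compactness. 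Your route is slightly longer but has the benefit of isolating the statement $\|\nabla(u_{k+1}-u_k)\|_p \to 0$, which is of independent interest (and is essentially what underlies Statement~\ref{thm:main:dist to 0} of Theorem~\ref{thm:main}). The paper's route is a touch cleaner because it avoids the scalar detour and never needs to explicitly construct a subsequence with $a_{k_n}\to\varepsilon$.
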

\begin{proof}
Consider the following set of indices: 
$$
I = \ungulata{k \in \mathbb{N}\!:~ \norm{\nabla(u_k - u)}_{p} < \varepsilon ~\text{and}~ \norm{\nabla (u_{k + 1} - u)} \geqslant \varepsilon}.
$$ 
If $I$ is finite, then there exists $N$ such that either $\ungulata{u_k}_{k=N}^{\infty} \subset B(u, \varepsilon)$ or $\ungulata{u_k}_{k=N}^{\infty} \subset W_0^{1,p}(\Omega) \setminus B(u, \varepsilon)$, which is impossible by the assumption on $B(u,\varepsilon)$. 
Hence, $I$ is infinite.

By Proposition~\ref{lem:if any subsequence is separated from zero}, the subsequence $\ungulata{u_{k_n} \mid k_n \in I} \subset B(u, \varepsilon)$ can be reduced to a sub-subsequence converging to some $v \in \overline{B(u,\varepsilon)} \cap {\cal U}$ strongly in $W_0^{1,p}(\Omega)$.  
Then, again by Proposition~\ref{lem:if any subsequence is separated from zero}, the corresponding thin-out of $\ungulata{u_{k_n + 1}} \subset W_0^{1,p}(\Omega) \setminus B(u,\varepsilon)$ also converges to $v$, implying $v \in W_0^{1,p}(\Omega) \setminus B(u, \varepsilon)$. Since $v \in {\cal U}$ lies both in $\overline{B(u,\varepsilon)}$ and $W_0^{1,p}(\Omega) \setminus B(u, \varepsilon)$, we conclude that $v \in \partial B(u, \varepsilon) \cap {\cal U}$.
\end{proof}

We are finally ready to prove Theorem~\ref{thm:main}.

\begin{proof}[Proof of Theorem~\ref{thm:main}]
Statements \ref{thm:main:R is an eigenvalue}, \ref{thm:main:partial limits are eigenfunctions}, \ref{thm:main:shifts}, and \ref{thm:main:alphas} of Theorem~\ref{thm:main} are given  by Propositions~\ref{lem:conv:sep} and \ref{lem:if any subsequence is separated from zero}. Statement \ref{thm:main:dist to 0} follows immediately from \ref{thm:main:shifts}.

In order to prove  Statement~\ref{thm:main:structure of U}, suppose ${\cal U}$ has an isolated point $u$, whereas ${\cal U} \setminus \ungulata{u}$ is non-empty, i.e., there exists $\varepsilon > 0$ such that $\overline{B(u,\varepsilon)} \cap {\cal U} = \ungulata{u}$ and ${\cal U} \not \subset \overline{B(u, \varepsilon)}$. 
Then it follows from Lemma~\ref{lem:U} that $\partial B(u, \varepsilon) \cap {\cal U} \ne \varnothing$, which contradicts the choice of $\varepsilon$. 
Thus,  ${\cal U}$  either consists of the only point $u$ or has no isolated elements at all.

Recall that by Proposition~\ref{lem:if any subsequence is separated from zero} the $L^p(\Omega)$-norms of the elements in ${\cal U}$ coincide:
$$ \norm{u}_p = a \quad \text{for any } u \in {\cal U}, \quad \text{where } a := \prnth{\frac{\alpha^*}{R^*}}^\frac{1}{p-1}. $$
If the eigenvalue $R^*$ is simple, then by \ref{thm:main:partial limits are eigenfunctions} all the elements in ${\cal U}$ are multiples of a single eigenfunction $\varphi$, and we can let $\norm{\varphi}_p = a$, so that ${\cal U} \subset \{\pm \varphi\}$. However, assuming ${\cal U} = \{\pm \varphi\}$ would mean that the set ${\cal U}$ consists of two isolated points, in contradiction to the already obtained result.
Therefore, ${\cal U}$ has to be a singleton, which finishes the proof of Statement~\ref{thm:main:structure of U}. 
\end{proof}

\medskip
\begin{remark}\label{remark:counterexample}
It is tempting to speculate whether the ``shift property'' given by Statement~\ref{thm:main:shifts} of Theorem~\ref{thm:main} implies the convergence of the whole sequence $\ungulata{u_k}$ generated by \hyperref[alg]{Algorithm A}. However, one can easily provide an example of a complete metric space $X$ and a sequence $\ungulata{x_k} \subset X$ such that
\begin{enumerate}[label={\rm(S\arabic*)}]
\item\label{shift property:1} any subsequence of $\ungulata{x_k}$ contains a convergent sub-subsequence,
\item\label{shift property:2} any shift $\ungulata{x_{k_n + i}}$, $i \in \mathbb{N}$, of any convergent subsequence $\ungulata{x_{k_n}}$ converges to the same limit as $\ungulata{x_{k_n}}$, \item\label{shift property:3} the whole sequence $\{x_k\}$ does not converge.
\end{enumerate}
For instance, consider the sequence of reals $\{x_k\}$ such that $x_0=0$ and 
\begin{equation}\label{countersequence}
x_{k+1} = x_k + \frac{\sigma_{k+1}}{k+1}
~\text{ for } k \geqslant 0, 
\end{equation} 
where $\sigma_0 = 1$ and for $k \geqslant 1$ the coefficient $\sigma_{k+1}$ is given by
$$ 
\sigma_{k+1} = 
\begin{cases}
1, &\text{ if } x_k < 0, \\
-1, &\text{ if } x_k > 1, \\
\sigma_k, &\text{ if } 0 \leqslant x_k \leqslant 1,
\end{cases}
$$
see Fig.~\ref{fig:enter-label}. 
Since the harmonic series diverges, the sequence $\{x_k\}$ passes through the points 0 and 1 infinitely many times, which yields \ref{shift property:3}. 
However, $\{x_k\}$ is obviously bounded and therefore contains a convergent subsequence $x_{k_n} \to \xi \in \mathbb{R}$. 
Note that
$$ x_{k_n + 1} = x_{k_n} + \frac{\sigma_{k_n + 1}}{k_n + 1} \to \xi + 0 = \xi, $$
so that the properties \ref{shift property:1} and \ref{shift property:2} are also satisfied. 
The existence of such a counterexample indicates that the convergence of the whole sequence $\ungulata{u_k}$ generated by \hyperref[alg]{Algorithm A} can not be derived from the results \ref{thm:main:R is an eigenvalue}-\ref{thm:main:structure of U} alone without some additional properties of \hyperref[alg]{Algorithm A} taken into consideration.

\begin{figure}[h]
\centering
\includegraphics[scale=0.4]{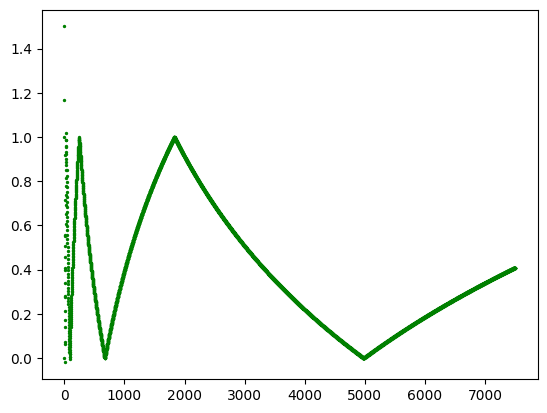}
\caption{The first 7501 elements of the sequence \eqref{countersequence}.}
\label{fig:enter-label}
\end{figure}
\end{remark}

\section{Numerical results}\label{sec:num}

In this section, we demonstrate the performance of \hyperref[alg]{Algorithm A} in a series of numerical experiments. In order to implement \hyperref[alg]{Algorithm A}, we used the \verb|FEniCS| finite-element library for \verb|python|. The boundary value problems \eqref{phi} were solved using the \verb|NonLinearVariationalSolver| tool provided by \verb|FEniCS|, which applies Newton iterations to non-linear systems approximating variational equations. We note that the use of Newton iterations restricts the convergence of our implementation, especially for very small and very large values of $p$. To solve \eqref{eq:R+=R-}, we used the \verb|ridder| root-finding  algorithm provided by the \verb|scipy.optimize| package. The full codes of \verb|python| programs corresponding to the numerical experiments described in this section are available at the git repository \cite{git}.

Hereinafter, we let $\Omega$ be the unit square $(0;1)^2$. 
In the case $p=2$, the spectrum of $\Omega$ is exhausted by the eigenvalues given by the formula $\pi^2 (m^2 + n^2)$, $m, n \in \mathbb{N}$. 
In particular, we get
\begin{alignat*}{2}
&\lambda_2 (\Omega, 2) =\lambda_3 (\Omega, 2) = 5\pi^2 \approx 49.35 
&&\quad \text{for}~ \min\{m, n\} = 1,~ \max\{m,n\} = 2,\\
&\lambda_4 (\Omega, 2) = 8\pi^2 \approx 78.96
&&\quad \text{for}~ m = n = 2,\\
&\lambda_5 (\Omega, 2) = \lambda_6 (\Omega, 2) = 10 \pi^2 \approx 98.70
&&\quad \text{for}~ \min\{m, n\} = 1,~ \max\{m,n\} = 3.
\end{alignat*}
For $p \ne 2$, no analytic expressions for the eigenvalues, including $\lambda_2 (\Omega, p)$, are available. 
To justify our computations, we compare the resulting estimates with the approximations obtained in \cite{Horak} for the square $2\Omega = (0;2)^2$, rescaling these by the factor of $2^p$. 

Besides $\lambda_2 (\Omega, p)$ itself, the work \cite{Horak} also investigates the values $\lambda_{{\cal S}_1} (\Omega, p)$ and $\lambda_{{\cal S}_2} (\Omega, p)$, where ${\cal S}_1$ is the class of functions odd about one of the square's \textit{middle lines} and even about the other,
${\cal S}_2$ is the class of functions odd about one of the square's \textit{diagonals} and even about the other, and $\lambda_{{\cal S}_i} (\Omega, p)$ is the least of all eigenvalues having some eigenfunction $u \in {\cal S}_i$ associated with them, $i = \overline{1;2}$. 
It is conjectured in \cite[pp.~477-478]{diening} that the second eigenfunction belongs to ${\cal S}_1$ for $p < 2$, and to ${\cal S}_2$ for $p > 2$; for $p = 2$, the second eigenvalue is associated with both ${\cal S}_1$- and ${\cal S}_2$-eigenfunctions. 
We let $\lambda_2^{\mathrm{H}}$, $\lambda_{{\cal S}_1}^{\mathrm{H}}$, and $\lambda_{{\cal S}_2}^{\mathrm{H}}$ denote the corresponding values from \cite{Horak} rescaled by the factor of $2^p$, so that
\begin{align}
\label{square lampdas:p<2}\lambda_2^{\mathrm{H}}=\lambda_{{\cal S}_1}^{\mathrm{H}}<\lambda_{{\cal S}_2}^{\mathrm{H}} & \quad \text{for }p<2 ,\\
\label{square lampdas:p=2}\lambda_2^{\mathrm{H}}=\lambda_{{\cal S}_1}^{\mathrm{H}}=\lambda_{{\cal S}_2}^{\mathrm{H}} & \quad \text{for } p=2,\\
\label{square lampdas:p>2}\lambda_2^{\mathrm{H}}=\lambda_{{\cal S}_2}^{\mathrm{H}}<\lambda_{{\cal S}_1}^{\mathrm{H}} & \quad \text{for } p>2.
\end{align}

In the following three examples, we test \hyperref[alg]{Algorithm A} by choosing different types of an initial guess $u_0$.

\begin{example}\label{ex:square mid} First consider the following initial guess:
\begin{equation}\label{square mid}
u_0^{{\cal S}_1} = xy \,(x-1)(y-1)\prnth{x-\frac{1}{2}}.
\end{equation}
Obviously, $u_0 \in {\cal S}_1$, so \hyperref[alg]{Algorithm A} has to preserve the evenness of $u_0$ at each iteration, see Remark~\ref{rem:initial-guess}/\ref{rem:initial-guess:2}. 
However, it might hypothetically happen that the oddness will be lost, since the right-hand side of \eqref{phi} lacks the antisymmetry for $\alpha \neq \beta(\alpha)$. 
Nevertheless, it is natural to expect the convergence to $\lambda_{{\cal S}_1}(\Omega,p)$. 
The approximate values $R[u_5] \approx \lambda_{{\cal S}_1}(\Omega,p)$ produced by 5 iterations of \hyperref[alg]{Algorithm A} for various $p$, as well as their comparison to the estimates from \cite{Horak}, are given in Table~\ref{table:square mid}. 
We see that our results are in agreement with those of \cite{Horak}. In particular, the values of $R[u_5]$ obtained for $p > 2$ lie closer to $\lambda_{{\cal S}_1}^{\mathrm{H}}$ than to $\lambda_2^{\mathrm{H}}$, reflecting the symmetry of $u_0^{{\cal S}_1}$ (recall \eqref{square lampdas:p>2}).
Detailed graphs, including the behavior of the Rayleigh quotients and the norms of the differences $|u_{k+1} - u_k|$, as well as the resulting approximations graphs, are given on Fig.~\ref{fig:square mid detailed}. Note that the Rayleigh quotients are monotonically decreasing, just as predicted by Theorem~\ref{th:existence-of-alphaK}.

\begin{table}
\centering
\begin{subtable}[t]{0.45\textwidth}
\begin{tabular}[t]{|c|c|c|c|c|c|}
	\hline
	$p$ & $R[u_5]$ & $\frac{\lambda_2^{\mathrm{H}} - R[u_5]}{\lambda_2^{\mathrm{H}}}$ & $\frac{\lambda_{{\cal S}_1}^{\mathrm{H}} - R[u_5]}{\lambda_{{\cal S}_1}^{\mathrm{H}}}$\\ \hline
	1.6 & 23.68 & -0.00177 & idem. \\ \hline
	1.7 & 28.61 & -0.00180 & idem. \\ \hline
	1.8 & 34.44 & -0.00187 & idem. \\ \hline
	1.9 & 41.32 & -0.00199 & idem. \\ \hline
	2.0 & 49.46 & -0.00209 & idem. \\ \hline
	2.1 & 59.06 & -0.00668 & -0.00228 \\ \hline
	2.2 & 70.38 & -0.01151 & -0.00237 \\ \hline
	2.3 & 83.74 & -0.01672 & -0.00258 \\ \hline
	2.4 & 99.48 & -0.02223 & -0.00271 \\ \hline
	2.5 & 118.02 & -0.02806 & -0.00290 \\ \hline
	2.6 & 139.83 & -- & -- \\ \hline
	2.7 & 165.49 & -- & -- \\ \hline
	2.8 & 195.66 & -- & -- \\ \hline
	2.9 & 231.11 & -- & -- \\ \hline
	3.0 & 272.74 & -0.06182 & -0.00401 \\ \hline
	3.1 & 321.60 & -- & -- \\ \hline
	3.2 & 378.94 & -- & -- \\ \hline
\end{tabular}
\end{subtable}%
\begin{subtable}[t]{0.45\textwidth}
\begin{tabular}[t]{|c|c|c|c|c|c|}
	\hline
	$p$ & $R[u_5]$ & $\frac{\lambda_2^{\mathrm{H}} - R[u_5]}{\lambda_2^{\mathrm{H}}}$ & $\frac{\lambda_{{\cal S}_1}^{\mathrm{H}} - R[u_5]}{\lambda_{{\cal S}_1}^{\mathrm{H}}}$\\ \hline
	3.3 & 446.19 & -- & -- \\ \hline
	3.4 & 525.02 & -- & -- \\ \hline
	3.5 & 617.40 & -- & -- \\ \hline
	3.6 & 725.61& -- & -- \\ \hline
	3.7 & 852.32 & -- & -- \\ \hline
	3.8 & 1000.63 & -- & -- \\ \hline
	3.9 & 1174.18 & -- & -- \\ \hline
	4.0 & 1377.19 & -0.15139 & -0.00734 \\ \hline
	4.1 & 1614.59 & -- & -- \\ \hline
	4.2 & 1892.11 & -- & -- \\ \hline
	4.3 & 2216.45 & -- & -- \\ \hline
	4.4 & 2595.40 & -- & -- \\ \hline
	4.5 & 3038.03 & -- & -- \\ \hline
	4.6 & 3554.91 & -- & -- \\ \hline
	4.7 & 4158.34 & -- & -- \\ \hline
	4.8 & 4862.65 & -- & -- \\ \hline
	4.9 & 5684.50 & -- & -- \\ \hline
	5.0 & 6643.28 & -0.26904 & -0.01230 \\ \hline
\end{tabular}
\end{subtable}
\caption{Example~\ref{ex:square mid}: the approximate eigenvalues of the unit square obtained by applying 5 iterations of \hyperref[alg]{Algorithm A} with the initial guess \eqref{square mid}, and their comparison to the results of \cite{Horak}.}
\label{table:square mid}
\end{table}
\end{example}

\begin{example}\label{ex:square diag}
Now let the initial guess be defined as
\begin{equation}\label{square diag} 
	u_0^{{\cal S}_2} = xy \, (x - 1) 
	(y - 1) (x - y). 
\end{equation}
This function belongs to ${\cal S}_2$. As in Example~\ref{ex:square mid}, we summarize the obtained numerical results in Table~\ref{table:square diag} and on Fig.~\ref{fig:square diag detailed}, this time comparing $R[u_5]$ to $\lambda_{{\cal S}_2}^{\mathrm{H}}$ instead of $\lambda_{{\cal S}_1}^{\mathrm{H}}$. The values of the corresponding relative errors indicate that our computations comply with those of \cite{Horak}. Note that for $p < 2$ the values of $R[u_5]$ tend to be very close to $\lambda_{{\cal S}_2}^{\mathrm{H}}$ and significantly greater than $\lambda_2^{\mathrm{H}}$, matching the symmetry of $u_0^{{\cal S}_2}$ (recall \eqref{square lampdas:p<2}). As before, we observe the decrease of the Rayleigh quotients predicted by Theorem~\ref{th:existence-of-alphaK}.

\begin{table}
\centering
\begin{subtable}[t]{0.4\textwidth}
\begin{tabular}[t]{|c|c|c|c|c|c|}
	\hline
	$p$ & $R[u_5]$ & $\frac{\lambda_2^{\mathrm{H}} - R[u_5]}{\lambda_2^{\mathrm{H}}}$ & $\frac{\lambda_{{\cal S}_2}^{\mathrm{H}} - R[u_5]}{\lambda_{{\cal S}_2}^{\mathrm{H}}}$\\ \hline
	1.6 & 24.02 & -0.01603 & -0.00130 \\ \hline
	1.7 & 28.92 & -0.01281 & -0.00135 \\ \hline
	1.8 & 34.69 & -0.00934 & -0.00141 \\ \hline
	1.9 & 41.47 & -0.00563 & -0.00155 \\ \hline
	2.0 & 49.43 & -0.00162 & -0.00162  \\\hline
\end{tabular}
\end{subtable}%
\begin{subtable}[t]{0.3\textwidth}
\begin{tabular}[t]{|c|c|c|}
	\hline
	$p$ & $R[u_5]$ & $\frac{\lambda_2^{\mathrm{H}} - R[u_5]}{\lambda_2^{\mathrm{H}}}$ \\ \hline
	
	2.1 & 58.77 & -0.00179  \\ \hline
	2.2 & 69.71 & -0.00188  \\ \hline
	2.3 & 82.53 & -0.00205 \\ \hline
	2.4 & 97.53 & -0.0022  \\ \hline
	2.5 & 115.07 & -0.00236 \\ \hline
	2.6 & 135.55 & -- \\ \hline
	2.7 & 159.46 & -- \\ \hline
	2.8 & 187.35 & -- \\ \hline
	2.9 & 219.85 & -- \\ \hline
	3.0 & 257.71 & -0.00333 \\ \hline
	3.1 & 301.78 & -- \\ \hline
	3.2 & 353.04 & -- \\ \hline
	3.3 & 412.64 & -- \\ \hline
	3.4 & 481.89 & -- \\ \hline
	3.5 & 562.3 & -- \\ \hline
\end{tabular}
\end{subtable}%
\begin{subtable}[t]{0.3\textwidth}
\begin{tabular}[t]{|c|c|c|}
	\hline
	$p$ & $R[u_5]$ & $\frac{\lambda_2^{\mathrm{H}} - R[u_5]}{\lambda_2^{\mathrm{H}}}$ \\ \hline
	3.6 & 655.63 & -- \\ \hline
	3.7 & 763.9 & -- \\ \hline
	3.8 & 889.45 & -- \\ \hline
	3.9 & 1034.95 & -- \\ \hline
	4.0 & 1203.50 & -0.00618 \\ \hline
	4.1 & 1398.68 & --\\ \hline
	4.2 & 1624.60 & -- \\ \hline
	4.3 & 1885.99 & --  \\ \hline
	4.4 & 2188.32 & -- \\ \hline
	4.5 & 2537.85 & -- \\ \hline
	4.6 & 2941.83 & -- \\ \hline
	4.7 & 3408.57 & -- \\ \hline
	4.8 & 3947.64 & -- \\ \hline
	4.9 & 4570.06 & -- \\ \hline
	5.0 & 5288.49 & -0.01024 \\ \hline
\end{tabular}
\end{subtable}
\caption{Example~\ref{ex:square diag}: the approximate eigenvalues of the unit square obtained by applying 5 iterations of \hyperref[alg]{Algorithm A} with the initial guess \eqref{square diag}, and their comparison with the results of \cite{Horak}.}
\label{table:square diag}
\end{table}
\end{example}

\begin{example}\label{ex:square rad}
Next consider the initial guess $u_0^{{\cal O}}$ which is centrally symmetric around the middle point $M = (1/2; 1/2)$ and whose nodal set is a circle centered at $M$ (see Fig.~\ref{fig:u_0_rad}):
\begin{equation}\label{square rad} 
u_0^{{\cal O}} = xy \, (x - 1)  (y - 1)  \quadr{\frac{1}{4^2} - \prnth{x - \frac{1}{2}}^2 - \prnth{y - \frac{1}{2}}^2}.
\end{equation}
Note that the function $u_0^{{\cal O}}$ is also symmetric with respect to both middle lines and both diagonals of the unit square $\Omega$. 

\begin{figure}[h!]
\centering
\includegraphics[trim=1.9cm 0.25cm 2.5cm 1.5cm, clip,width=0.45\linewidth]{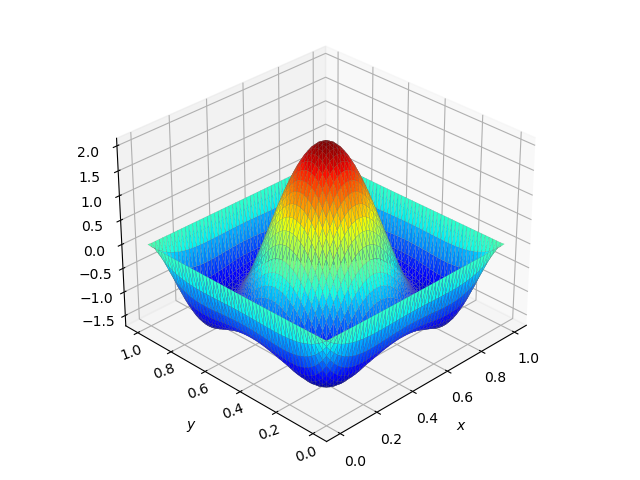}
\includegraphics[width=0.45\linewidth]{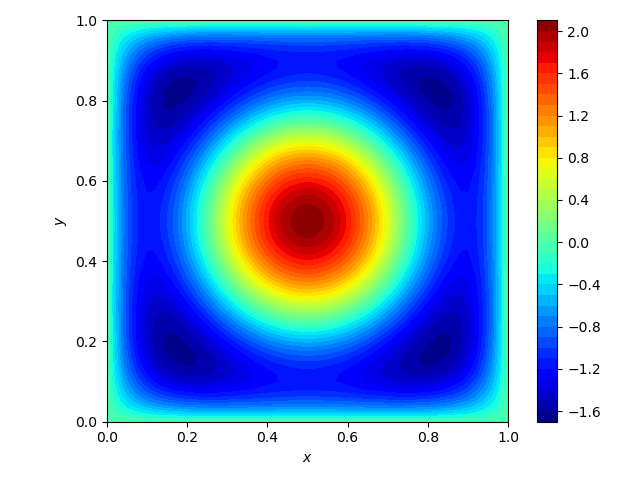}
\caption{Example~\ref{ex:square rad}: the graph of the initial guess \eqref{square rad}.}
\label{fig:u_0_rad}
\end{figure}

Since the solution of the $p$-Poisson problem \eqref{phi} is unique, each function $u_k$ produced by \hyperref[alg]{Algorithm A} has to obey the same symmetries as $u_0^{{\cal O}}$, and hence so do their limits (see Remark~\ref{rem:initial-guess}/\ref{rem:initial-guess:2}).
However, in our numerical experiments, $u_k$ does not inherit the symmetries precisely, due to the mesh geometry and numerical errors. 
We present the corresponding results for $p \in \ungulata{1.7, 2, 3}$ on Fig.~\ref{fig:square rad}.
For each $p$, just as expected, the Rayleigh quotients graphs exhibit monotonically decreasing behavior.
However, the decrease is not ``uniform'', as we observe three visually distinct stabilization stages on each graph.
The switch between the stabilization stages occurs rapidly, corresponding to the gradual increase of the norms $\norm{u_{k+1} - u_k}_p$.
This increase reflects the process of $u_k$ changing its graph's geometry, in particular its symmetry patterns. 
Namely, the first stabilization stage produces a function preserving all the symmetries of $u_0^{\cal O}$, 
the second one produces a centrally symmetric function having the diagonal symmetry without the middle-line symmetry, and the third one produces a function with the central antisymmetry. 
In the linear case $p = 2$, such portraits \textit{roughly} correspond to
\begin{alignat*}{2}
&\text{the fifth eigenfunction}~ &&\sin (3\pi x) \sin(\pi y) + \sin (\pi x) \sin(3\pi y),\\
&\text{the fourth eigenfunction}~ &&\sin (2\pi x) \sin(2\pi y),\\
&\text{the second eigenfunction}~ &&\sin (2\pi x) \sin(\pi y) + \alpha \sin (\pi x) \sin(2\pi y)
~~\text{for some}~~ \alpha \in \mathbb{R},
\end{alignat*}
respectively. It is therefore not surprising that the values of $R[u_k]$ at the stabilization stages of $\ungulata{R[u_k]}$ lie close to $\lambda_5 (\Omega, 2)$, $\lambda_4 (\Omega, 2)$, and $\lambda_2 (\Omega, 2)$, in the order of occurence:
$$ 
\frac{R[u_8] - \lambda_5 (\Omega, 2)}{\lambda_5 (\Omega, 2)} \approx 0.10\%, \quad \frac{R[u_{37}] - \lambda_4 (\Omega, 2)}{\lambda_4 (\Omega, 2)} \approx -0.03\%, \quad \frac{R[u_{100}] - \lambda_2 (\Omega, 2)}{\lambda_2 (\Omega, 2)} \approx 0.06\%.
$$
A natural explanation to such an ``instability'' could be as follows. Initially, the algorithm was about to converge to an eigenfunction with a central symmetry (the fifth eigenfunction in the case $p = 2$). However, as the numerical errors accumulate at each iteration, the symmetry of the consecutive solutions $u_k$ starts deteriorating,
leading to its total rearrangement.
This in turn allows the algorithm to minimize the Rayleigh quotient in a drastic manner
(down towards the fourth eigenvalue in the case $p = 2$).
Then the same process repeats,
lowering $R[u_k]$ towards $\lambda_2 (\Omega, p)$ this time.
Since the second eigenvalue is the smallest which \hyperref[alg]{Algorithm A} can possibly reach,
the iterative scheme finally achieves full stabilization,
and no further jump is expected.
We assume that using more advanced techniques for solving \eqref{phi} numerically,
i.e., those preserving all the right-hand side's symmetries, 
might prevent such an ``instability''.
\end{example}

\begin{figure}
\centering
\begin{subfigure}{0.33\textwidth}
\includegraphics[width=\linewidth]{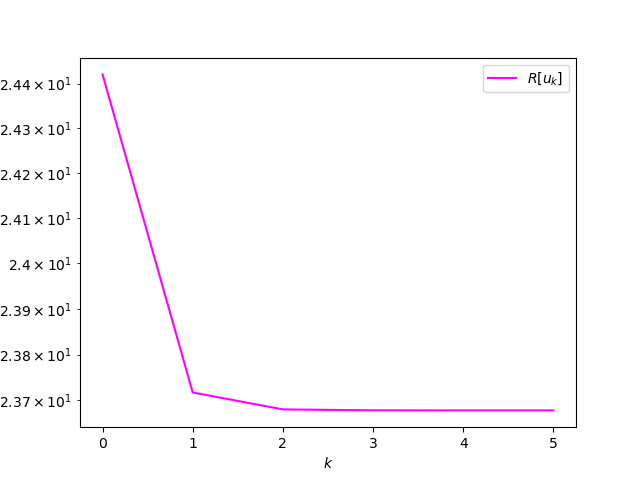}
\end{subfigure}%
\hspace*{\fill}
\begin{subfigure}{0.33\textwidth}
\includegraphics[width=\linewidth]{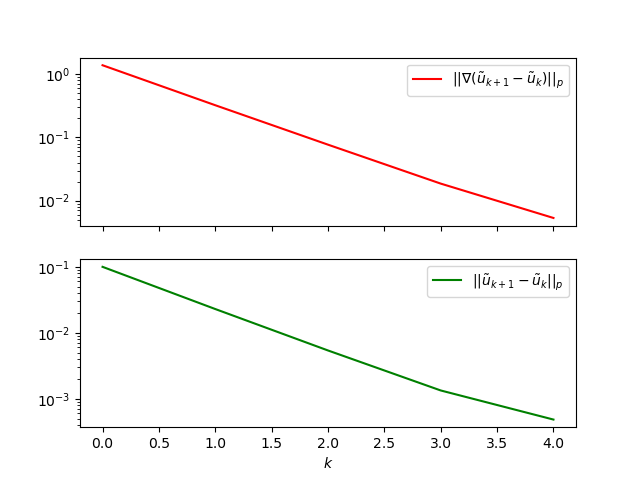}
\end{subfigure}%
\hspace*{\fill}  
\begin{subfigure}{0.33\textwidth}
\includegraphics[trim=1.9cm 0.5cm 2.5cm 1.5cm, clip, width=\linewidth]{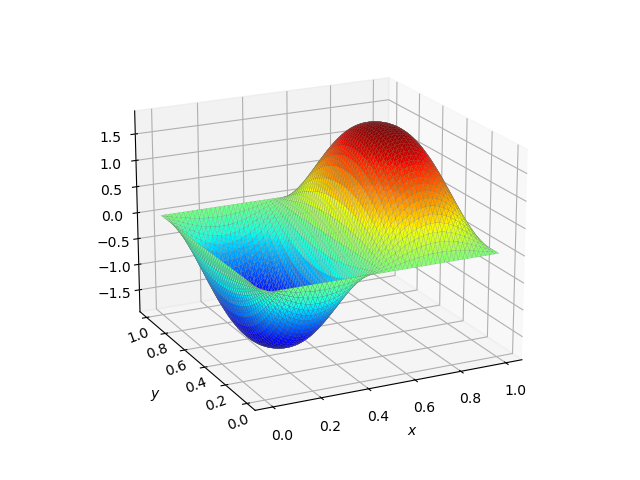}
\end{subfigure}\\[0.25cm]
\begin{subfigure}{0.33\textwidth}
\includegraphics[width=\linewidth]{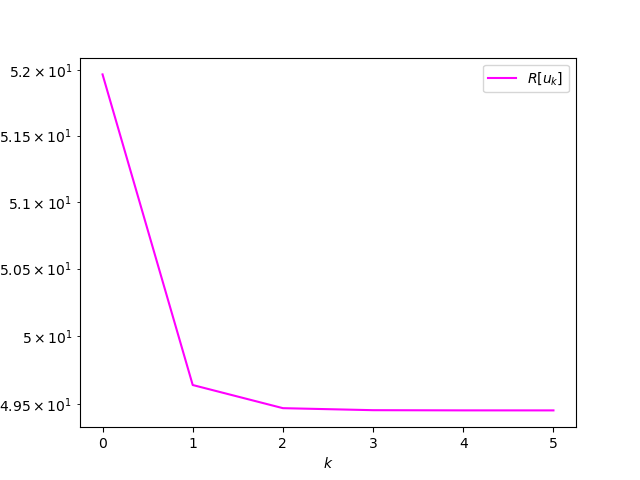}
\end{subfigure}%
\hspace*{\fill} 
\begin{subfigure}{0.33\textwidth}
\includegraphics[width=\linewidth]{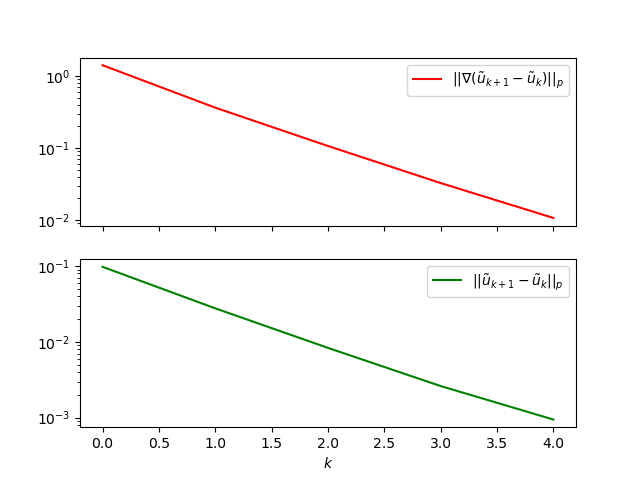}
\end{subfigure}%
\hspace*{\fill} 
\begin{subfigure}{0.33\textwidth}
\includegraphics[trim=1.9cm 0.5cm 2.5cm 1.5cm, clip,width=\linewidth]{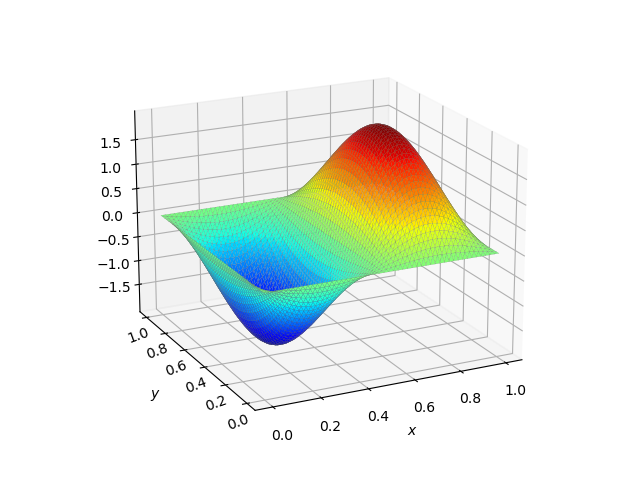}
\end{subfigure}\\[0.25cm]
\begin{subfigure}{0.33\textwidth}
\includegraphics[width=\linewidth]{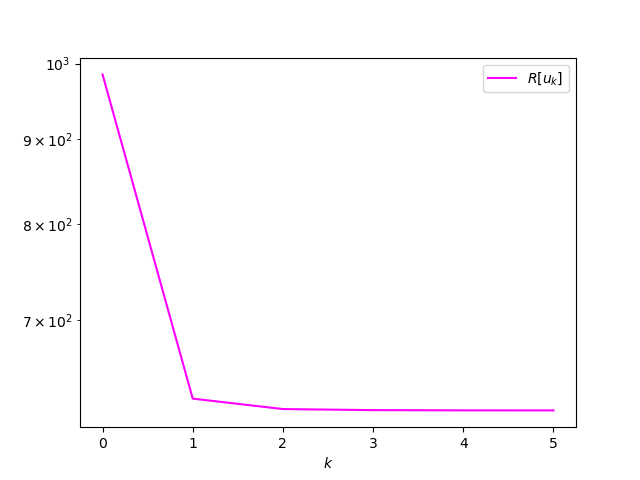}
\end{subfigure}%
\hspace*{\fill} 
\begin{subfigure}{0.33\textwidth}
\includegraphics[width=\linewidth]{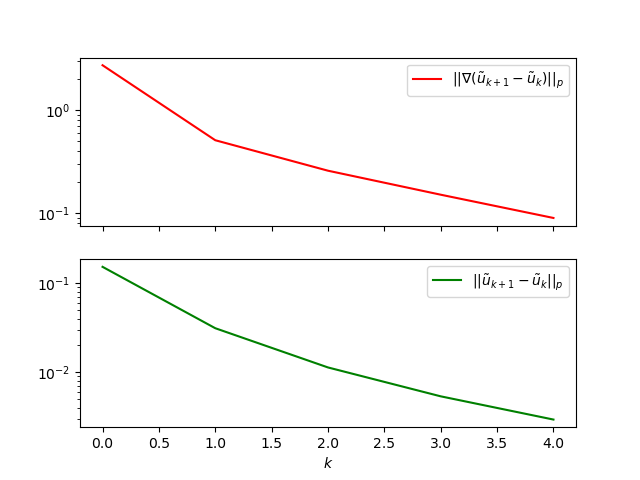}
\end{subfigure}%
\hspace*{\fill} 
\begin{subfigure}{0.33\textwidth}
\includegraphics[trim=1.9cm 0.5cm 2.5cm 1.5cm, clip,width=\linewidth]{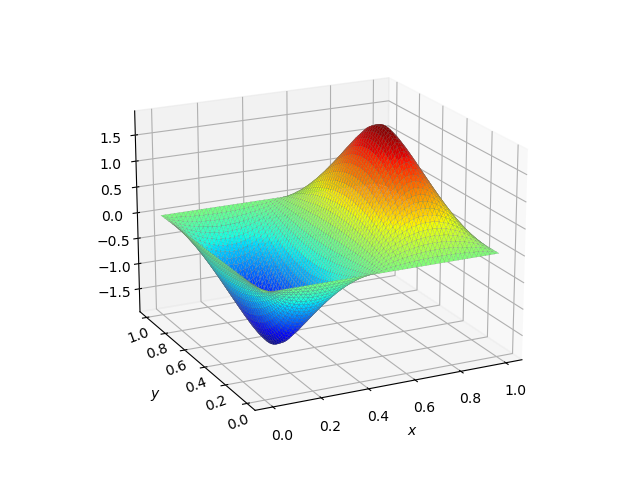}
\end{subfigure}\\[0.25cm]
\begin{subfigure}{0.33\textwidth}
\includegraphics[width=\linewidth]{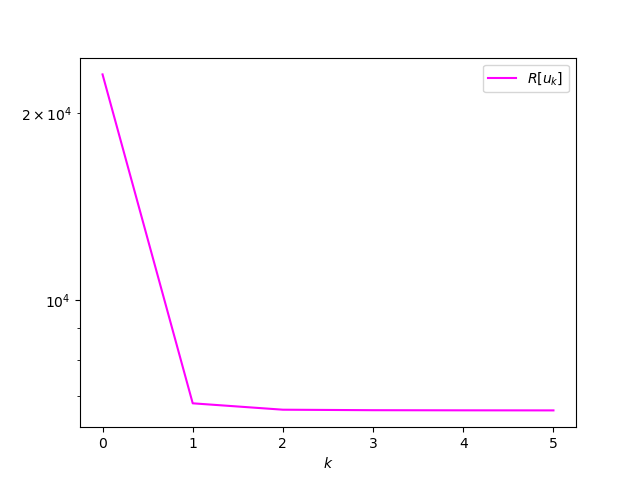}
\end{subfigure}%
\hspace*{\fill}
\begin{subfigure}{0.33\textwidth}
\includegraphics[width=\linewidth]{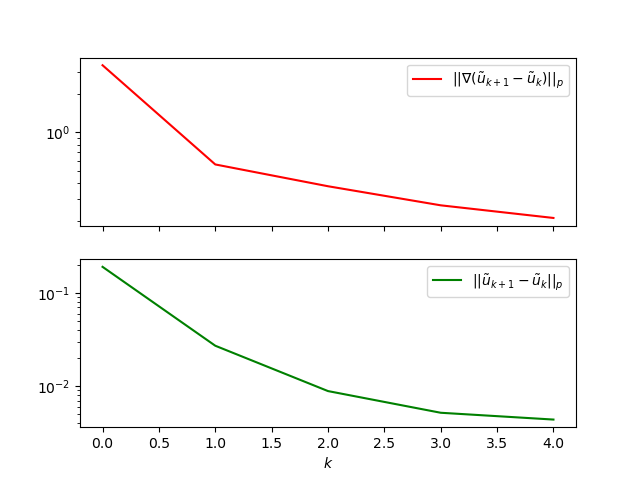}
\end{subfigure}%
\hspace*{\fill}
\begin{subfigure}{0.33\textwidth}
\includegraphics[trim=1.9cm 0.5cm 2.5cm 1.5cm, clip,width=\linewidth]{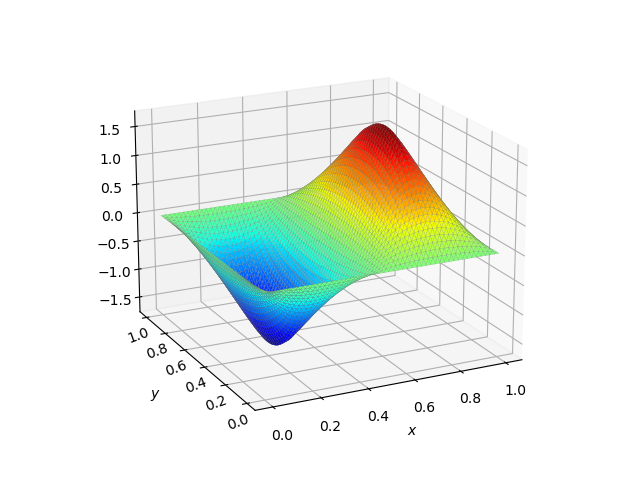}
\end{subfigure}%
\caption{Example~\ref{ex:square mid}: the detailed graphs for some particular values of $p$. Left column -- the Rayleigh quotients sequences $\ungulata{R[u_k]}_{k=0}^5$ (logarithmic $y$ scale); middle column -- the $\Wo$- and $L^p(\Omega)$-norms (red and green, respectively) of the difference between two consecutive approximations $u_k$ and $u_{k+1}$ (logarithmic $y$ scale); right column -- the graphs of the resulting approximations $u_5$. The rows correspond to (from the top to the bottom) $p = 1.6$, $p = 2$, $p = 3.5$, and $p = 5$.}
\label{fig:square mid detailed}
\end{figure}

\begin{figure}
\centering
\begin{subfigure}{0.33\textwidth}
\includegraphics[width=\linewidth]{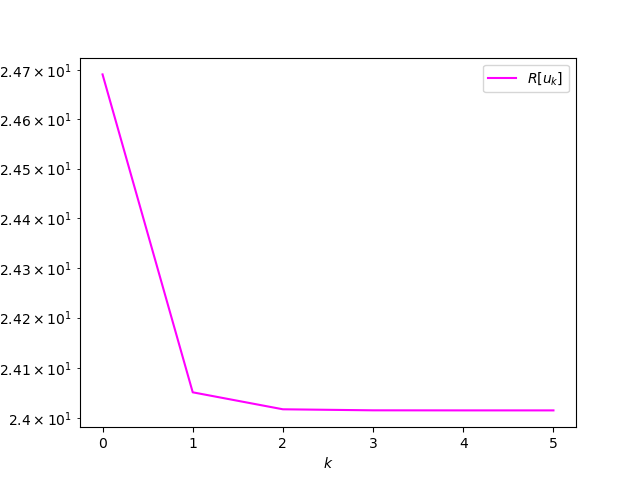}
\end{subfigure}%
\hspace*{\fill} 
\begin{subfigure}{0.33\textwidth}
\includegraphics[width=\linewidth]{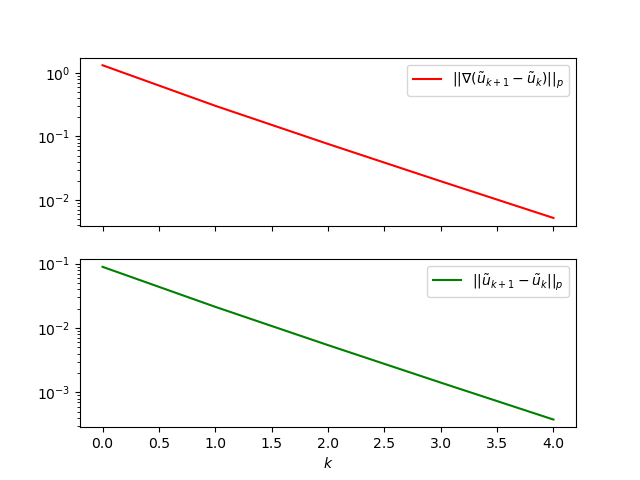}
\end{subfigure}%
\hspace*{\fill}  
\begin{subfigure}{0.33\textwidth}
\includegraphics[trim=1.9cm 0.5cm 2.5cm 1.5cm, clip, width=\linewidth]{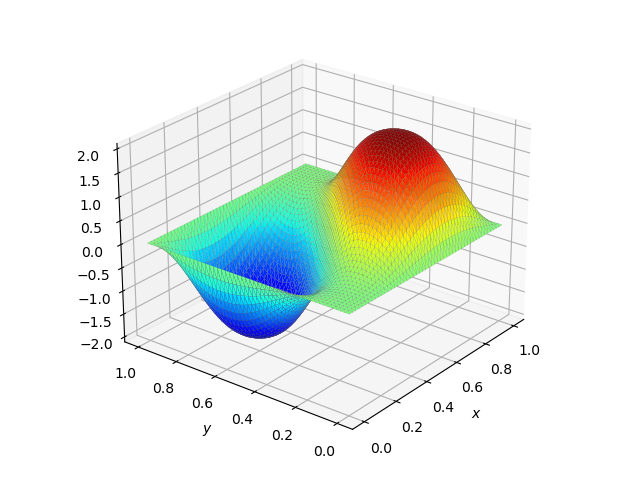}
\end{subfigure}\\[0.25cm]
\begin{subfigure}{0.33\textwidth}
\includegraphics[width=\linewidth]{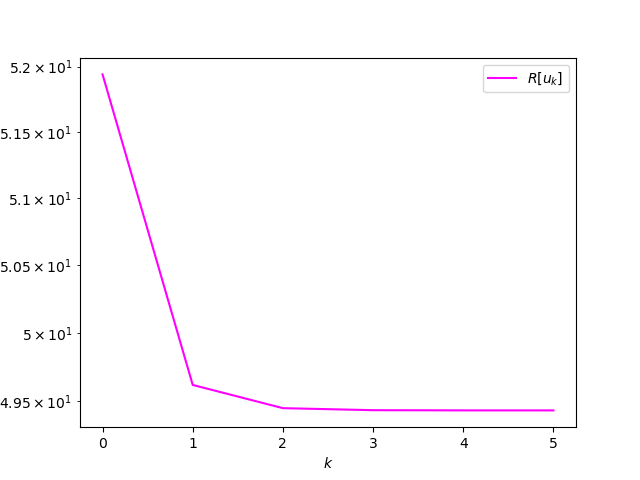}
\end{subfigure}%
\hspace*{\fill} 
\begin{subfigure}{0.33\textwidth}
\includegraphics[width=\linewidth]{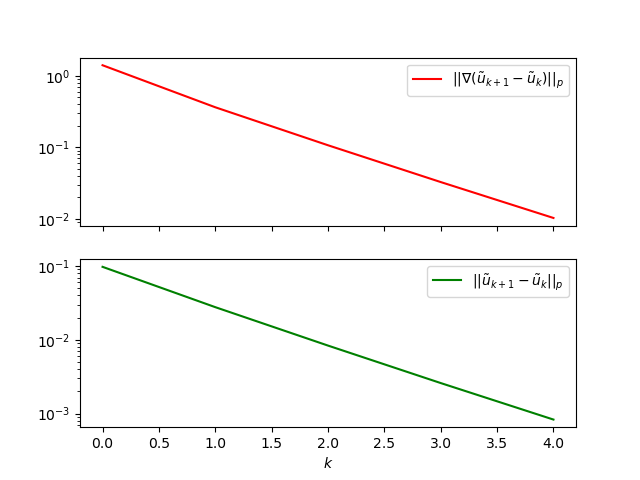}
\end{subfigure}%
\hspace*{\fill} 
\begin{subfigure}{0.33\textwidth}
\includegraphics[trim=1.9cm 0.5cm 2.5cm 1.5cm, clip,width=\linewidth]{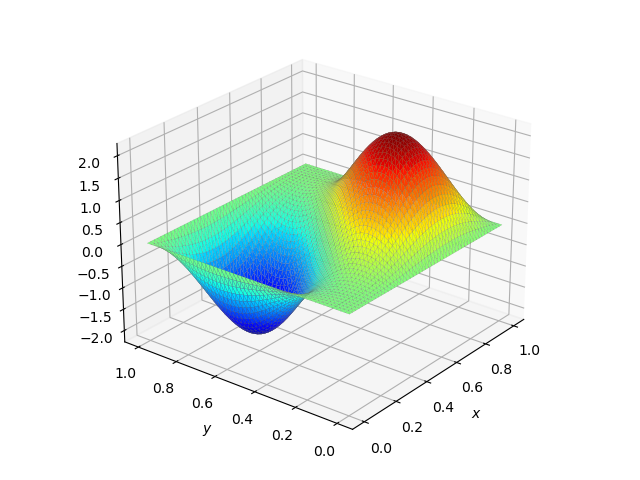}
\end{subfigure}\\[0.25cm]
\begin{subfigure}{0.33\textwidth}
\includegraphics[width=\linewidth]{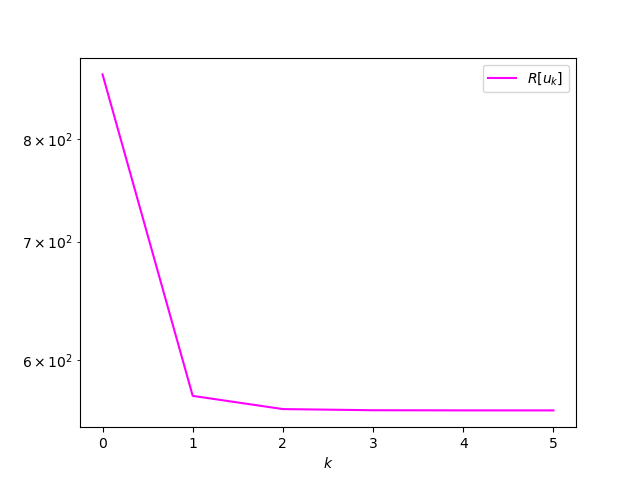}
\end{subfigure}%
\hspace*{\fill} 
\begin{subfigure}{0.33\textwidth}
\includegraphics[width=\linewidth]{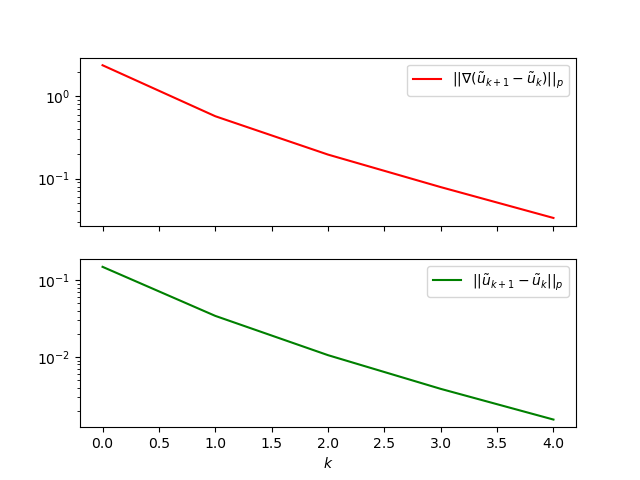}
\end{subfigure}%
\hspace*{\fill} 
\begin{subfigure}{0.33\textwidth}
\includegraphics[trim=1.9cm 0.5cm 2.5cm 1.5cm, clip,width=\linewidth]{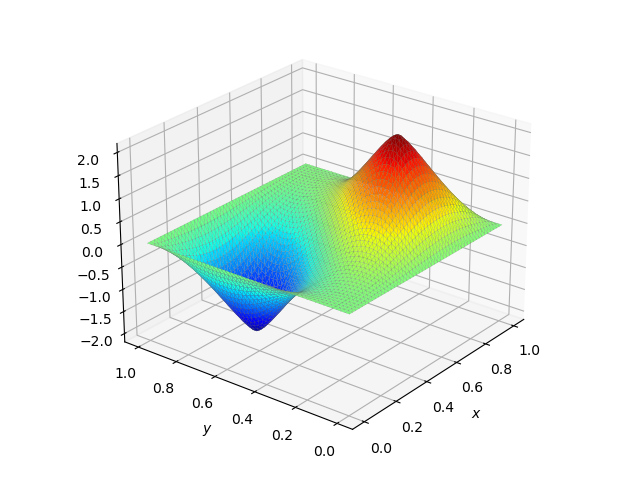}
\end{subfigure}\\[0.25cm]
\begin{subfigure}{0.33\textwidth}
\includegraphics[width=\linewidth]{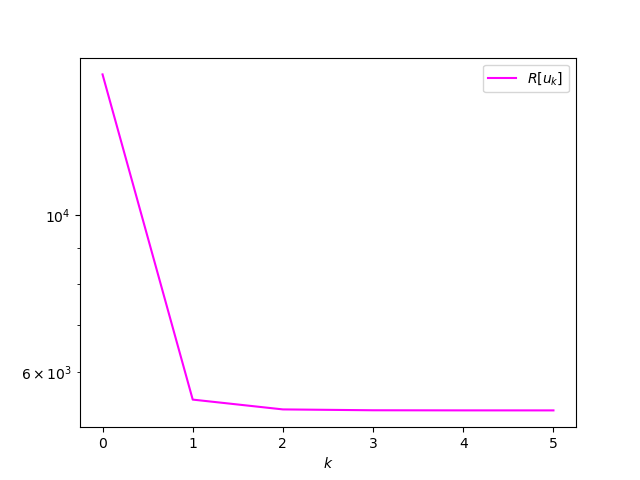}
\end{subfigure}%
\hspace*{\fill}
\begin{subfigure}{0.33\textwidth}
\includegraphics[width=\linewidth]{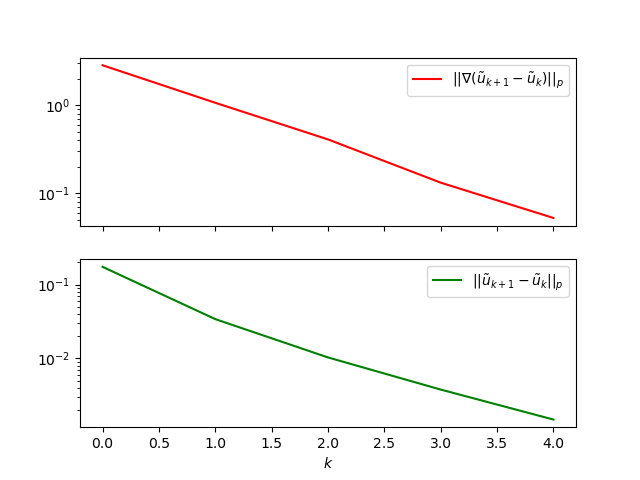}
\end{subfigure}%
\hspace*{\fill}
\begin{subfigure}{0.33\textwidth}
\includegraphics[trim=1.9cm 0.5cm 2.5cm 1.5cm, clip,width=\linewidth]{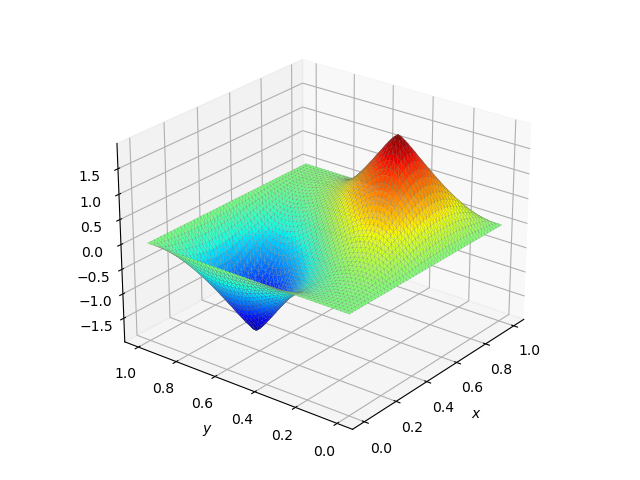}
\end{subfigure}%
\caption{Example~\ref{ex:square diag}: the detailed graphs for some particular values of $p$. Left column -- the Rayleigh quotients sequences $\ungulata{R[u_k]}_{k=0}^5$ (logarithmic $y$ scale); middle column -- the $\Wo$- and $L^p(\Omega)$-norms (red and green, respectively) of the difference between two consecutive approximations $u_k$ and $u_{k+1}$ (logarithmic $y$ scale); right column -- the graphs of the resulting approximations $u_5$. The rows correspond to (from the top to the bottom) $p = 1.6$, $p = 2$, $p = 3.5$, and $p = 5$.}
\label{fig:square diag detailed}
\end{figure}

\begin{figure}
\centering
\includegraphics[width=0.3\linewidth]{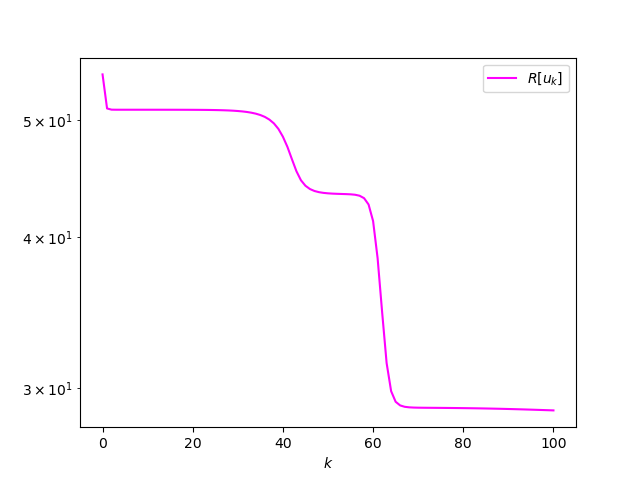}
\includegraphics[width=0.3\linewidth]{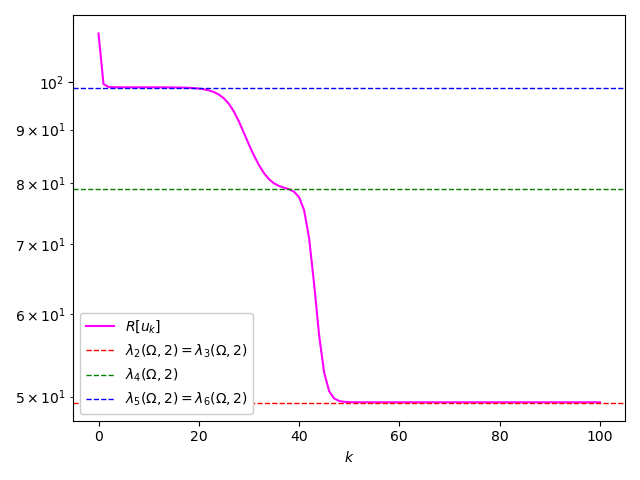}
\includegraphics[width=0.3\linewidth]{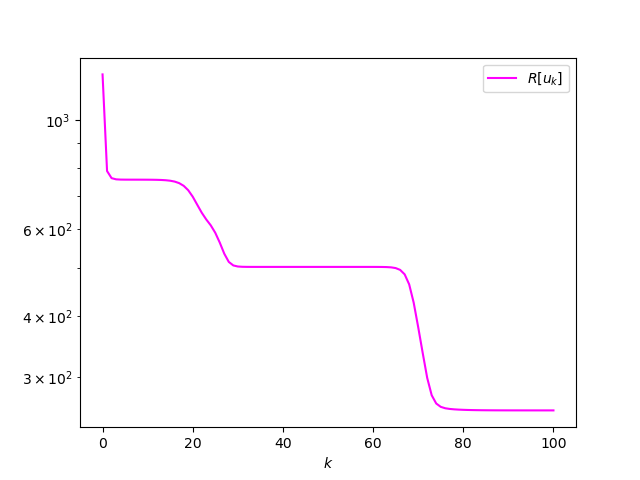}
\includegraphics[width=0.3\linewidth]{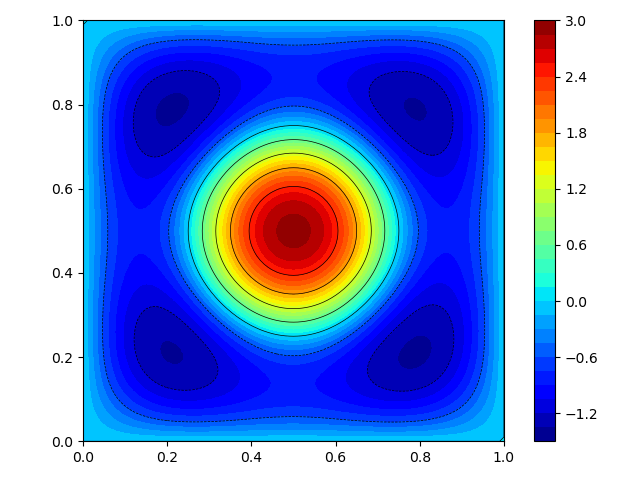}
\includegraphics[width=0.3\linewidth]{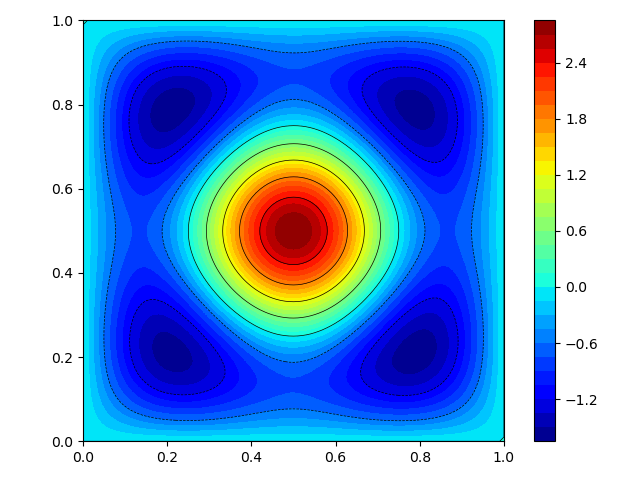}
\includegraphics[width=0.3\linewidth]{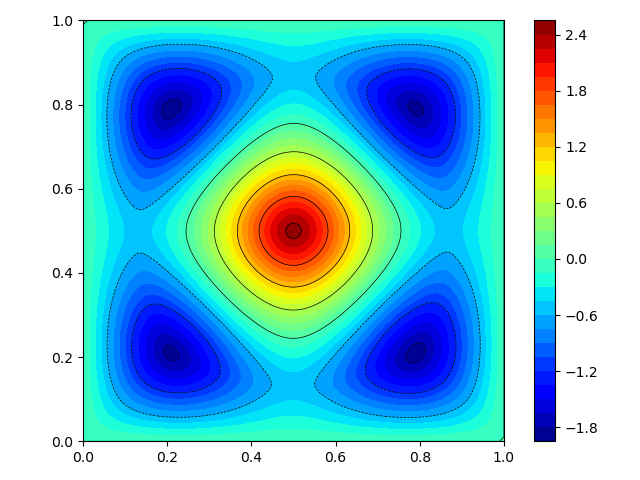}
\includegraphics[width=0.3\linewidth]{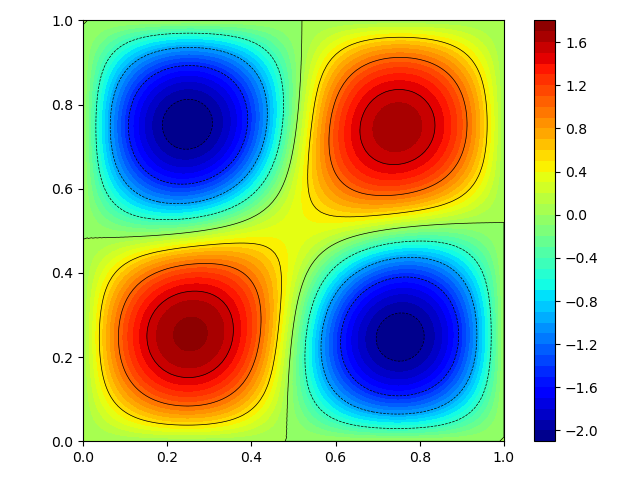}
\includegraphics[width=0.3\linewidth]{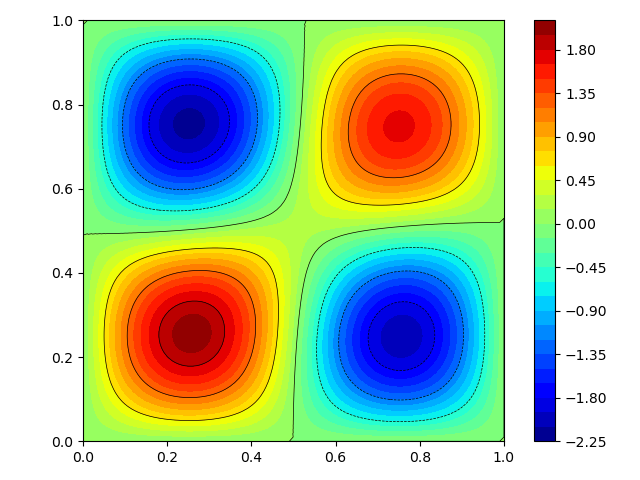}
\includegraphics[width=0.3\linewidth]{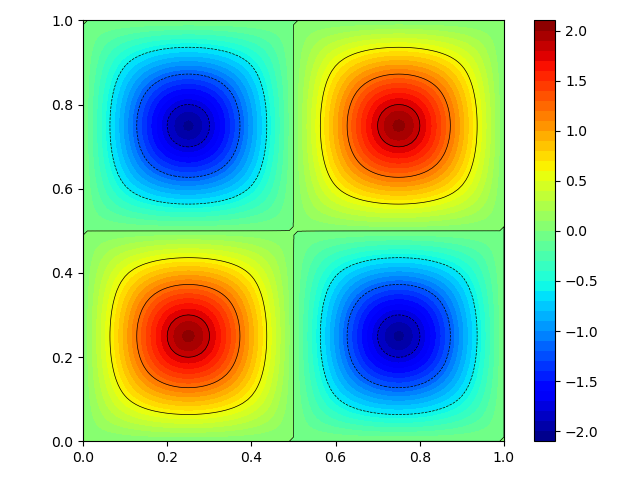}
\includegraphics[width=0.3\linewidth]{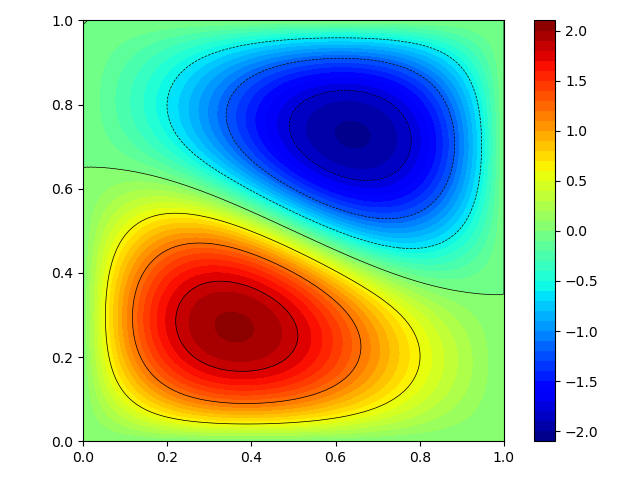}
\includegraphics[width=0.3\linewidth]{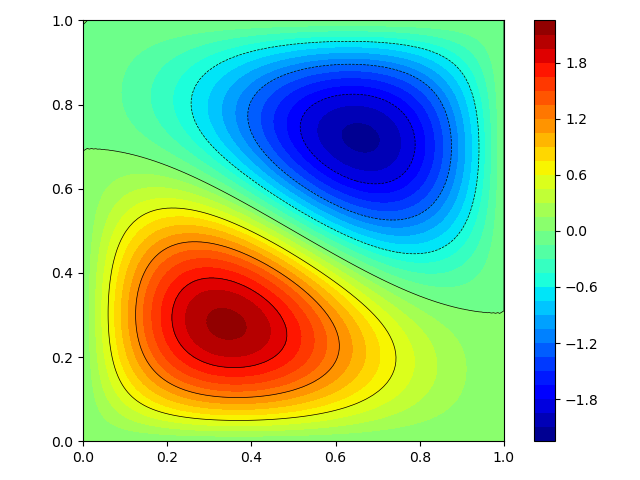}
\includegraphics[width=0.3\linewidth]{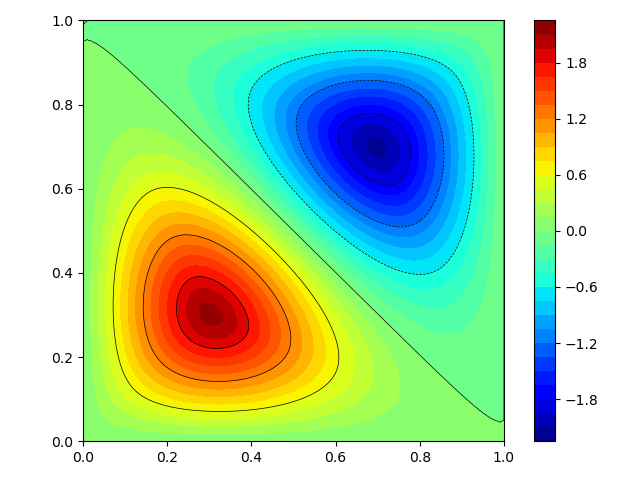}
\caption{Example~\ref{ex:square rad}: the results of the numerical experiments for $p = 1.7$ (left column), $p = 2$ (millde column), and $p = 3$ (right column). From the top to the bottom: the graphs of the Rayleigh quotients; the graphs of the functions $u_k$ corresponding to the first, the second, and the third stabilization stages of $R[u_k]$, respectively.}
\label{fig:square rad}
\end{figure}

\appendix
\section{Appendix. Basic properties of the $p$-Poisson problem}\label{sec:appendix}

The results of this section are well-known, and we provide details for the sake of clarity. 
At each step of \hyperref[alg]{Algorithm A}, the boundary value problem \eqref{phi} needs to be solved. This is a particular case of the more general $p$-Poisson problem
\begin{equation}\label{Dirichlet formal0} 
\begin{cases}
-\Delta_p v = f \text{ in } \Omega, \\
v\big|_{\partial \Omega} = 0,
\end{cases}
\end{equation}
where $v \in W_0^{1,p}(\Omega)$ and $f \in L^{p'}(\Omega)$. 
Recall from Section~\ref{sec:intro} that we always understand \eqref{Dirichlet formal0} in the weak sense, that is, $v$ must only satisfy
\begin{equation}\label{Dirichlet weak} 
\I{\Omega}{} \abs{\nabla v}^{p-2} \langle \nabla v, \nabla \testf \rangle \, dx = \I{\Omega}{} f \testf \,dx 
\quad \text{for any}~ \testf \in W_0^{1,p}(\Omega).
\end{equation}
It is known that the solution $v$ of \eqref{Dirichlet weak} exists, it is unique, and $v \neq 0$ if and only if $f \neq 0$.
The following simple result is essentially a weak maximum principle. 

\begin{lemma}\label{lem:sign-change lemma}
Let $f \in L^{p'}(\Omega)$ and let $v \in W_0^{1,p}(\Omega)$ be the solution of \eqref{Dirichlet weak}. 
If $\norm{v^+}_{p} > 0$, then $\norm{f^+}_{p'} > 0$, and if $\norm{v^-}_{p} > 0$, then $\norm{f^-}_{p'}>0$.
\end{lemma}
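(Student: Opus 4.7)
The plan is to establish the lemma by contrapositive, using the weak formulation \eqref{Dirichlet weak} tested against $v^+$ (respectively $-v^-$). This is a standard weak maximum principle argument, and the key observation is that since $v^\pm \in \Wo$ with $\nabla v^\pm = \pm \mathbf{1}_{\Omega_v^\pm} \nabla v$ (as recalled in the Notation paragraph), these are admissible test functions.

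To prove the first assertion, suppose toward contradiction that $\norm{f^+}_{p'} = 0$, i.e., $f \leqslant 0$ a.e.\ in $\Omega$. Testing \eqref{Dirichlet weak} with $\testf = v^+$ yields
$$
\I{\Omega}{} \abs{\nabla v}^{p-2} \langle \nabla v, \nabla v^+ \rangle \, dx = \I{\Omega}{} f \, v^+ \, dx.
$$
On the left, the decomposition $\nabla v^+ = \mathbf{1}_{\Omega_v^+} \nabla v$ gives $\abs{\nabla v}^{p-2} \langle \nabla v, \nabla v^+ \rangle = \mathbf{1}_{\Omega_v^+} \abs{\nabla v}^p = \abs{\nabla v^+}^p$, so the left-hand side equals $\norm{\nabla v^+}_p^p$. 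On the right, since $v^+ \geqslant 0$ and $f \leqslant 0$ a.e., we have $\int f v^+ \, dx \leqslant 0$. Therefore $\norm{\nabla v^+}_p^p \leqslant 0$, which together with the Poincaré inequality forces $v^+ = 0$ a.e.\ in $\Omega$, contradicting the hypothesis $\norm{v^+}_p > 0$.

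The second assertion is proved analogously by testing with $\testf = -v^-$: since $-\nabla v^- = \mathbf{1}_{\Omega_v^-} \nabla v$, one obtains
$$
\norm{\nabla v^-}_p^p = -\I{\Omega}{} f\, v^- \, dx,
$$
and the assumption $\norm{f^-}_{p'} = 0$ (equivalently $f \geqslant 0$ a.e.) together with $v^- \geqslant 0$ makes the right-hand side non-positive, again forcing $v^- = 0$ a.e.\ in $\Omega$ and producing the required contradiction.

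There is no real obstacle here; the only point that deserves care is the correct sign bookkeeping when splitting $\nabla v$ according to $\Omega_v^\pm$ and verifying that $v^\pm$ are legitimate elements of $\Wo$, which is a standard fact cited already in the Notation paragraph.
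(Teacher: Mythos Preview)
Your proof is correct and follows essentially the same approach as the paper's: both argue by contradiction, test the weak formulation with $\testf = v^+$ (or $\testf = -v^-$), and use the identity $\abs{\nabla v}^{p-2}\langle \nabla v, \nabla v^\pm\rangle = \abs{\nabla v^\pm}^p$ together with the sign assumption on $f$ to force $\norm{\nabla v^\pm}_p = 0$. The paper simply treats one case ``without loss of generality'' while you spell out both, but the argument is identical.
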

\begin{proof}
Suppose, without loss of generality, that $\norm{v^-}_{p} > 0$, but $\norm{f^-}_{p'}=0$, i.e., $f = f^+$. 
Taking $\testf = -v^-$ in \eqref{Dirichlet weak}, we get
$$ 
\norm{\nabla v^-}_{p}^p = -\I{\Omega}{} f  v^- \, dx\leqslant 0,
$$
which implies $\norm{v^-}_{p} = 0$, a contradiction. 
\end{proof}

Another useful property of the solution $v$ of \eqref{Dirichlet weak} is its continuous dependence in $\Wo$ on the right-hand side $f$.
\begin{lemma}\label{cont lemma}
Let $f_1, f_2 \in L^{p'}(\Omega)$ and let $v_1, v_2 \in W_0^{1,p}(\Omega)$ be the solutions of
\begin{equation}\label{cont}
\begin{cases}
-\Delta_p v_i = f_i \text{ in } \Omega, \\
v_i\big|_{\partial \Omega} = 0,
\end{cases}
\quad i = \overline{1;2}.
\end{equation} 
If $p \geqslant 2$, then 
\begin{equation}\label{cont dep est}
\norm{\nabla(v_1 - v_2)}_{p} 
\leqslant 
2^{\frac{p-2}{p-1}} \lambda_1^{\frac{-1}{p(p-1)}}(\Omega,p) \norm{f_1 - f_2}_{p'}^{\frac{1}{p-1}},
\end{equation}
and if $1 < p < 2$, then 
\begin{equation}\label{cont dep est p < 2}\norm{\nabla(v_1 - v_2)}_{p} 
\leqslant 
\frac{8{\lambda_1^{-1/p}(\Omega,p)}}{(p-1)} \prnth{\abs{\Omega}^{\frac{2-p}{2}} + 2\max_{i = \overline{1;2}}\norm{\nabla v_i}_p^{\frac{p(2-p)}{2}}}^{2/p} \, \norm{f_1 - f_2}_{p'}.
\end{equation}
\end{lemma}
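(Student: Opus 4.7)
The natural approach is to subtract the two weak formulations \eqref{cont} and test the resulting identity with $\psi = v_1 - v_2 \in W_0^{1,p}(\Omega)$. This yields the fundamental identity
$$
\I{\Omega}{} \langle \abs{\nabla v_1}^{p-2}\nabla v_1 - \abs{\nabla v_2}^{p-2}\nabla v_2, \nabla(v_1 - v_2)\rangle \, dx = \I{\Omega}{} (f_1 - f_2)(v_1 - v_2) \, dx,
$$
which I will then bound below using standard Simon-type vector inequalities for the $p$-Laplace nonlinearity and above using H\"older's and Poincar\'e's inequalities.

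For the case $p \geqslant 2$, I would invoke the elementary inequality $\langle \abs{a}^{p-2}a - \abs{b}^{p-2}b, a-b\rangle \geqslant 2^{2-p}\abs{a-b}^p$ for $a, b \in \mathbb R^D$, giving $2^{2-p}\norm{\nabla(v_1-v_2)}_p^p \leqslant \norm{f_1-f_2}_{p'}\norm{v_1-v_2}_p$. Combining this with the Poincar\'e inequality $\norm{v_1-v_2}_p \leqslant \lambda_1^{-1/p}(\Omega,p)\norm{\nabla(v_1-v_2)}_p$ from \eqref{lambda_1 var}, canceling one factor of $\norm{\nabla(v_1-v_2)}_p$, and raising to the $1/(p-1)$ power yields \eqref{cont dep est} directly, with exactly the claimed constants.

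For $1 < p < 2$, the corresponding inequality is weaker, of the form $\langle \abs{a}^{p-2}a - \abs{b}^{p-2}b, a-b\rangle \geqslant (p-1)(1+\abs{a}+\abs{b})^{p-2}\abs{a-b}^2$, so there is no immediate $L^p$-bound. My plan is to first write
$$
\I{\Omega}{}\abs{\nabla(v_1-v_2)}^p\,dx = \I{\Omega}{} \bigl[(1+\abs{\nabla v_1}+\abs{\nabla v_2})^{p-2}\abs{\nabla(v_1-v_2)}^2\bigr]^{p/2}\bigl[(1+\abs{\nabla v_1}+\abs{\nabla v_2})^p\bigr]^{(2-p)/2}\,dx,
$$
and apply H\"older with conjugate exponents $2/p$ and $2/(2-p)$. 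The first factor after H\"older is bounded by $(p-1)^{-p/2}\bigl(\norm{f_1-f_2}_{p'}\norm{v_1-v_2}_p\bigr)^{p/2}$ via the Simon inequality and Poincar\'e, while the second is bounded, using convexity-type estimates $(1+a+b)^p \leqslant 3^{p-1}(1 + a^p + b^p)$ and subadditivity of $t\mapsto t^{(2-p)/2}$, by a universal constant times $\bigl(\abs{\Omega}^{(2-p)/2} + 2\max_i\norm{\nabla v_i}_p^{p(2-p)/2}\bigr)$. Rearranging and taking the $2/p$-th power as in the $p\geqslant 2$ case, and absorbing all numerical constants into the factor $8$, gives \eqref{cont dep est p < 2}.

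The main obstacle is the $1 < p < 2$ case: the weighted Simon inequality forces one to pass through an $L^2$-type control before returning to $L^p$, and the bookkeeping of constants (the factor $8$) requires some care, especially the choice between using $(\abs{a}+\abs{b})^{p-2}$ (singular at the origin) versus $(1+\abs{a}+\abs{b})^{p-2}$; the latter is what generates the $\abs{\Omega}^{(2-p)/2}$ summand in the stated bound. Everything else is a routine application of H\"older, Poincar\'e, and subadditivity of concave power functions.
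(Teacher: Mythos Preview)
Your proposal is correct and follows essentially the same approach as the paper: subtract the weak formulations, test with $v_1-v_2$, invoke the standard monotonicity inequalities \eqref{Lindqvist >=2}--\eqref{Lindqvist <2} for the $p$-Laplace nonlinearity, and close with H\"older and the Poincar\'e inequality \eqref{lambda_1 var}. The only cosmetic difference is in the $1<p<2$ case, where the paper first bounds the weight $(1+|a|^2+|b|^2)^{(2-p)/2}$ pointwise before raising to the power $p/2$ and integrating, whereas you write $|\nabla(v_1-v_2)|^p$ as a product with cancelling weights and apply H\"older directly---your route is slightly cleaner but leads to the same structure and constants.
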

\begin{proof} 
In the case $v_1 = v_2$, there is nothing to prove, so let $v_1 \ne v_2$. 
Taking $(v_1 - v_2)$ as test functions in the weak forms of \eqref{cont}, we get 
\begin{equation*}
\I{\Omega}{} \abs{\nabla v_i}^{p-2} \langle \nabla v_i, \nabla (v_1 - v_2) \rangle \, dx = \I{\Omega}{} f_i (v_1 - v_2) \,dx, \quad i = \overline{1;2},
\end{equation*}
and subtracting one equality from the other, we obtain
\begin{equation}\label{Otklonenije id}
\I{\Omega}{} \langle \abs{\nabla v_1}^{p-2} \nabla v_1 - \abs{\nabla v_2}^{p-2} \nabla v_2, \nabla (v_1 - v_2) \rangle \, dx = \I{\Omega}{}(f_1 - f_2) (v_1 - v_2) \, dx.
\end{equation} 
The integrand in the left-hand side of \eqref{Otklonenije id} can be estimated using the following well-known inequalities (see, e.g., \cite[pp.~97-100]{Lindqvist p-Laplace}): for any $a, b \in \mathbb{R}^D$,
\begin{align}
\label{Lindqvist >=2}\langle \abs{a}^{p-2} a - \abs{b}^{p-2}b, a - b \rangle \geqslant &\,2^{2-p} \abs{a - b}^p, &p \geqslant 2, \\
\label{Lindqvist <2}(1 + \abs{a}^2 + \abs{b}^2)^{\frac{2-p}{2}} \, \langle \abs{a}^{p-2} a - \abs{b}^{p-2}b, a - b \rangle \geqslant &\,(p-1) \abs{a - b}^2 , &1 < p \leqslant 2. 
\end{align}

Let $p \geqslant 2$. Estimating the left-hand side of \eqref{Otklonenije id} via \eqref{Lindqvist >=2} and applying the Hölder inequality in the right-hand side of \eqref{Otklonenije id}, we get
\begin{equation} 
2^{2 - p} \I{\Omega}{} \abs{\nabla (v_1 - v_2)}^p \,dx 
\leqslant \norm{v_1 - v_2}_{p} \, \norm{f_1 - f_2}_{p'}.
\end{equation}
From the variational characterization of $\lambda_1 (\Omega, p)$ it follows that
$$
2^{2-p} \norm{\nabla(v_1 - v_2)}_{p}^p \leqslant \lambda_1^{-1/p} (\Omega, p) \, \norm{\nabla (v_1 - v_2)}_{p} \, \norm{f_1 - f_2}_{p'}, 
$$
which finally leads to \eqref{cont dep est}.

Now consider $1 < p < 2$. In \eqref{Lindqvist <2}, we apply the estimate
$$ 
\prnth{1 + \abs{a}^2 + \abs{b}^2}^{\frac{2-p}{2}} \leqslant 2^{\frac{2-p}{2}} \prnth{1 + \prnth{\abs{a}^{2} + \abs{b}^2}^{\frac{2-p}{2}}} \leqslant 2^{\frac{2-p}{2}}\prnth{1 + 2^{\frac{2-p}{2}}\prnth{\abs{a}^{2-p} + \abs{b}^{2-p}}}.
$$
Since $(2-p)/2 < 1/2$, we get
\begin{equation}\label{eq:A81} 
\prnth{1 + \abs{a}^2 + \abs{b}^2}^{\frac{2-p}{2}} \leqslant 2 \prnth{1 +\abs{a}^{2-p}  + \abs{b}^{2-p}}.
\end{equation}
For $a = \nabla v_1(x)$ and $b = \nabla v_2(x)$, \eqref{Lindqvist <2} and \eqref{eq:A81}  give
\begin{align*} 
(p-1) \abs{\nabla (v_1 - v_2)}^2 \leqslant &\, 2\prnth{1 + \abs{\nabla v_1}^{2-p} + \abs{\nabla v_2}^{2-p}} \\
&\times\langle \abs{\nabla v_1}^{p-2} \nabla v_1 - \abs{\nabla v_2}^{p-2} v_2, \nabla (v_1 - v_2) \rangle.
\end{align*}
Raising the latter estimate to the power of $p/2$, applying the bound
$$
(1 + \abs{\nabla v_1}^{2-p} + \abs{\nabla v_2}^{2-p})^{p/2} \leqslant 2^{p}\prnth{1 + \abs{\nabla v_1}^{p(2-p)/2} + \abs{\nabla v_2}^{p(2-p)/2}}, 
$$
integrating the resulting inequality over $\Omega$, and applying the Hölder inequality with the exponents $2/p > 1$ and $(2/p)' = 2/(2-p)$, we deduce that
\begin{align} 
(p-1)^{p/2} \norm{\nabla(v_1 - v_2)}_{p}^p 
&\leqslant 
2^{{3p}/{2}} \quadr{\abs{\Omega}^{\frac{2-p}{2}}
+
\Sum{i=1}{2} \prnth{\I{\Omega}{} \abs{\nabla v_i}^{p} \,dx}^{\frac{2-p}{2}}} \\
&\times
\prnth{\I{\Omega}{} \langle \abs{\nabla v_1}^{p-2} \nabla v_1 - \abs{\nabla v_2}^{p-2} v_2, \nabla (v_1 - v_2) \rangle \,dx}^{p/2}
\end{align}
Employing \eqref{Otklonenije id} and the Hölder inequality, we conclude
$$ 
\norm{\nabla(v_1 - v_2)}_{p}^p \leqslant \frac{2^{3p/2}}{(p-1)^{p/2}}\prnth{\abs{\Omega}^{\frac{2-p}{2}} + \Sum{i=1}{2}\norm{\nabla v_i}_p^{\frac{p(2-p)}{2}}} 
\norm{v_1 - v_2}_{p}^{p/2} \norm{f_1 - f_2}_{p'}^{p/2}.
$$
Finally, estimating $\norm{v_1 - v_2}_{p}$ via \eqref{lambda_1 var}, dividing by $\norm{\nabla (v_1 - v_2)}_{p}^{p/2}$, and raising to the power of $2/p$, we derive \eqref{cont dep est p < 2}.
\end{proof}

\begin{corollary}\label{right sides convergence yields the solutions convergence}
Let $\ungulata{f_k}\subset L^{p'}(\Omega)$ converge strongly in $L^{p'}(\Omega)$ to some $f \in L^{p'}(\Omega)$, and for each $k \in \mathbb{N}$ let $v_k \in \Wo$ be the solution of 
\begin{equation}\label{eq:vkweak}
-\Delta_p v_k = f_k \text{ in } \Omega.
\end{equation}
Then $\ungulata{v_k}$ converges strongly in $W_0^{1,p}(\Omega)$ to the solution $v \in \Wo$ of 
$$ 
-\Delta_p v = f \text{ in } \Omega. 
$$
\end{corollary}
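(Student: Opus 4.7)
[Proof plan for Corollary~\ref{right sides convergence yields the solutions convergence}]
The plan is to invoke Lemma~\ref{cont lemma} directly, splitting the argument according to whether $p \geqslant 2$ or $1 < p < 2$. The solution $v \in \Wo$ of $-\Delta_p v = f$ exists and is unique by the standard theory for \eqref{Dirichlet weak}, so the only real content is the convergence $v_k \to v$ in $\Wo$.

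First I would handle the case $p \geqslant 2$, which is immediate: applying \eqref{cont dep est} to the pair $(v_k, v)$ with right-hand sides $(f_k, f)$ gives
\begin{equation}
\norm{\nabla(v_k - v)}_p \leqslant 2^{\frac{p-2}{p-1}} \lambda_1^{\frac{-1}{p(p-1)}}(\Omega, p) \, \norm{f_k - f}_{p'}^{\frac{1}{p-1}},
\end{equation}
and the right-hand side tends to zero by hypothesis, yielding the desired strong convergence.

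For the case $1 < p < 2$, the estimate \eqref{cont dep est p < 2} involves $\max\{\|\nabla v_k\|_p, \|\nabla v\|_p\}$, so the main preliminary step is to produce a uniform bound on $\{\|\nabla v_k\|_p\}$. I would obtain this by testing the weak form \eqref{eq:vkweak} with $\psi = v_k$ and using Hölder together with the Poincaré-type estimate from \eqref{lambda_1 var}:
\begin{equation}
\norm{\nabla v_k}_p^p = \I{\Omega}{} f_k v_k \, dx \leqslant \norm{f_k}_{p'} \norm{v_k}_p \leqslant \lambda_1^{-1/p}(\Omega, p) \, \norm{f_k}_{p'} \norm{\nabla v_k}_p,
\end{equation}
which gives $\norm{\nabla v_k}_p^{p-1} \leqslant \lambda_1^{-1/p}(\Omega, p) \norm{f_k}_{p'}$. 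Since $\{f_k\}$ converges in $L^{p'}(\Omega)$, it is bounded there, and hence $\{\|\nabla v_k\|_p\}$ is bounded by some constant $M$; the same bound clearly applies to $v$ by passing to the limit (or by the same direct estimate with $f$). Plugging this into \eqref{cont dep est p < 2} with $v_1 = v_k$, $v_2 = v$ yields
\begin{equation}
\norm{\nabla(v_k - v)}_p \leqslant \frac{8 \lambda_1^{-1/p}(\Omega, p)}{p-1} \bigl(\abs{\Omega}^{\frac{2-p}{2}} + 2 M^{\frac{p(2-p)}{2}}\bigr)^{2/p} \norm{f_k - f}_{p'},
\end{equation}
and the right-hand side again vanishes in the limit. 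The main (minor) obstacle is just keeping track of the uniform bound on the gradient norms in the sub-quadratic regime; once this is in place, the lemma does all the heavy lifting.
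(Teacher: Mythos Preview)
Your proposal is correct and follows essentially the same approach as the paper: split into $p \geqslant 2$ (immediate from \eqref{cont dep est}) and $1 < p < 2$ (first bound $\norm{\nabla v_k}_p$ uniformly by testing \eqref{eq:vkweak} with $v_k$ and using H\"older plus \eqref{lambda_1 var}, then apply \eqref{cont dep est p < 2}). The paper's proof is identical in structure and in the key estimate.
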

\begin{proof}
If $p \geqslant 2$, then the desired result follows directly from Lemma~\ref{cont lemma}. 
For $p \in (1;2)$, the same reasoning is valid provided $\sup \norm{\nabla v_k}_{p} = W < \infty$, since in this case by \eqref{cont dep est p < 2} we have
$$ 
\norm{\nabla (v - v_{k})}_{p} \leqslant C(\Omega,p) \cdot \prnth{\abs{\Omega}^{\frac{2-p}{2}} + 2\max\big\{\norm{\nabla v}_{p}, W\big\}^{\frac{p(2-p)}{2}}}^{2/p} \norm{f- f_{k}}_{p'} 
\quad \text{for any}~ k\in\mathbb{N}.
$$
Let us show the existence of $W$.
Using $v_k$ as a test function in \eqref{eq:vkweak}, applying the Hölder inequality, and employing \eqref{lambda_1 var}, we get
$$\norm{\nabla v_k}_{p}^p \leqslant \lambda_1^{-1/p}(\Omega,p) \, \norm{\nabla v_k}_{p} \norm{f_k}_{p'},$$ so that
$$ 
\norm{\nabla v_k}_{p} \leqslant \lambda_1^{\frac{-1}{p(p-1)}}(\Omega,p) \, \norm{f_k}_{p'}^{1/(p-1)}.
$$
Since $\ungulata{f_k}$ converges in $L^{p'}(\Omega)$, the existence of $W$ follows, completing the proof. 
\end{proof}

\begin{corollary}\label{strong partial limits of solutions}
Let $\ungulata{f_k} \subset W_0^{1,p}(\Omega)$ be bounded in $W_0^{1,p}(\Omega)$ and for each $k \in \mathbb{N}$ let $v_k \in \Wo$ be the solution of 
\begin{equation}\label{amano}
-\Delta_p v_k = |f_k|^{p-2}f_k \text{ in } \Omega.
\end{equation}
Choose a subsequence $\ungulata{f_{k_n}}$ converging to some $f \in W_0^{1,p}(\Omega)$ weakly in $W_0^{1,p}(\Omega)$.
Then $\ungulata{f_{k_n}}$ can be thinned out so that $\ungulata{v_{k_n}}$ converges strongly in $W_0^{1,p}(\Omega)$ to the solution of $-\Delta_p v = |f|^{p-2}f$ in $\Omega$.
\end{corollary}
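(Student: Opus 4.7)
My plan is to reduce the claim to the previous Corollary~\ref{right sides convergence yields the solutions convergence} by upgrading the weak convergence of $\{f_{k_n}\}$ in $W_0^{1,p}(\Omega)$ to strong convergence of $\{|f_{k_n}|^{p-2}f_{k_n}\}$ in $L^{p'}(\Omega)$.

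First, since $\{f_{k_n}\}$ converges weakly in $W_0^{1,p}(\Omega)$, the Rellich--Kondrachov compact embedding $W_0^{1,p}(\Omega) \hookrightarrow \hookrightarrow L^p(\Omega)$ (recall that $\Omega$ has finite measure) gives $f_{k_n} \to f$ strongly in $L^p(\Omega)$. By a standard measure-theoretic argument, we can extract a further subsequence (still denoted $\{f_{k_n}\}$) along which $f_{k_n} \to f$ pointwise a.e.\ in $\Omega$ and, moreover, $|f_{k_n}| \leqslant g$ a.e.\ for some $g \in L^p(\Omega)$.

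Second, I claim that $|f_{k_n}|^{p-2}f_{k_n} \to |f|^{p-2}f$ strongly in $L^{p'}(\Omega)$. The continuity of the map $s \mapsto |s|^{p-2}s$ on $\mathbb{R}$ together with a.e.\ convergence $f_{k_n} \to f$ yields pointwise convergence a.e.\ of the corresponding integrands. To apply the dominated convergence theorem, it suffices to note that
$$
\bigl||f_{k_n}|^{p-2}f_{k_n} - |f|^{p-2}f\bigr|^{p'} \leqslant 2^{p'}\bigl(|f_{k_n}|^{(p-1)p'} + |f|^{(p-1)p'}\bigr) = 2^{p'}\bigl(|f_{k_n}|^{p} + |f|^{p}\bigr) \leqslant 2^{p'}\bigl(g^p + |f|^p\bigr),
$$
and the right-hand side lies in $L^1(\Omega)$. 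Hence $|f_{k_n}|^{p-2}f_{k_n} \to |f|^{p-2}f$ in $L^{p'}(\Omega)$ along this sub-subsequence.

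Third, since $\{|f_{k_n}|^{p-2}f_{k_n}\}$ converges strongly in $L^{p'}(\Omega)$, Corollary~\ref{right sides convergence yields the solutions convergence} applied to the sequence of $p$-Poisson problems \eqref{amano} gives that $\{v_{k_n}\}$ converges strongly in $W_0^{1,p}(\Omega)$ to the unique solution $v$ of $-\Delta_p v = |f|^{p-2}f$ in $\Omega$, as required. The only nontrivial step is the passage from $L^p$ convergence of $f_{k_n}$ to $L^{p'}$ convergence of the nonlinear expression $|f_{k_n}|^{p-2}f_{k_n}$, which is handled cleanly by the pointwise/dominated-convergence argument above; the rest is bookkeeping.
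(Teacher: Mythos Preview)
Your proof is correct and follows essentially the same route as the paper: Rellich--Kondrachov to upgrade weak $W_0^{1,p}$ convergence to strong $L^p$ convergence, pass to an a.e.\ convergent subsequence, deduce $|f_{k_n}|^{p-2}f_{k_n}\to|f|^{p-2}f$ strongly in $L^{p'}(\Omega)$, and then invoke Corollary~\ref{right sides convergence yields the solutions convergence}. The only cosmetic difference is that the paper cites the Brezis--Lieb lemma for the $L^{p'}$ convergence of the nonlinearity, whereas you give a direct dominated-convergence argument using the $L^p$ majorant $g$; your version is slightly more self-contained.
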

\begin{proof} 
	Note that the existence of $f$ and $\ungulata{f_{k_n}}$ follows from the boundedness of $\{\norm{\nabla f_{k}}_{p}\}$. 
	By the Rellich-Kondrachov theorem, $\ungulata{f_{k_n}}$ can also be considered converging to $f$ strongly in $L^p(\Omega)$ and a.e.\ in $\Omega$.  
	Moreover, the Brezis-Lieb lemma allows to reduce $\ungulata{f_{k_n}}$ in such a way that $\ungulata{|f_{k_n}|^{p-2}f_{k_n}}$ converges to $|f|^{p-2}f$ strongly in $L^{p'}(\Omega)$. Then it follows from Corollary~\ref{right sides convergence yields the solutions convergence} that $\ungulata{v_{k_n}}$ converges strongly in $W_0^{1,p}(\Omega)$ to the function $v \in W_0^{1,p}(\Omega)$ solving $-\Delta_p v = |f|^{p-2}f$ in $\Omega$.
\end{proof}

\addcontentsline{toc}{section}{\refname}
\small

\end{document}